\tikzstyle{matrix of math nodes}=[
\newcommand{\halfscale}{1/4} 
\newcommand{\yyy}[6]{ 
   \begin{scope}[black]  
          \draw[dashed] (#1,#2) -- +($\halfscale*(-1.5,0)$); 
          \draw         (#1,#2) -- +($\halfscale*(1.5,0)$);  
          \draw         (#1,#2) -- +($\halfscale*(0,1.5)$); 
   \end{scope}
   \begin{scope}[ultra thick]
   \ifthenelse{ \equal{#3}{#5} \AND \equal{#4}{#6} }   
   {  \ifthenelse{ \equal{#3}{0} }                      
        { 
             \draw[arrows = {-left to},  transform canvas={shift={(-0.03,0)}}, red!80]  (#1,#2) -- +($\halfscale*(#3,#4)$);
             \draw[arrows = {-right to}, transform canvas={shift={( 0.03,0)}}, blue!80] (#1,#2) -- +($\halfscale*(#5,#6)$); }
        { 
          \ifthenelse{ \equal{#3}{-1} }
          { \draw[arrows = {-left to}, transform canvas={shift={(0,-0.03)}}, red!80]  (#1,#2) -- +($\halfscale*(#3,#4)$);
            \draw[arrows = {-right to}, transform canvas={shift={(0, 0.03)}}, blue!80] (#1,#2) -- +($\halfscale*(#5,#6)$); }
          { \draw[arrows = {-left to}, transform canvas={shift={(0, 0.03)}}, red!80]  (#1,#2) -- +($\halfscale*(#3,#4)$);
            \draw[arrows = {-right to}, transform canvas={shift={(0,-0.03)}}, blue!80] (#1,#2) -- +($\halfscale*(#5,#6)$); }
        }    
   }       
   { 
     \ifthenelse{ \equal{#3}{0} \AND \equal{#4}{0} } {} { \draw[arrows = {->},red!80]  (#1,#2) -- +($\halfscale*(#3,#4)$); }
     \ifthenelse{ \equal{#5}{0} \AND \equal{#6}{0} } {} { \draw[arrows = {->},blue!80] (#1,#2) -- +($\halfscale*(#5,#6)$); }
     \ifthenelse{ \equal{#3}{0} \AND \equal{#4}{0} } { \filldraw[red!80]  (#1,#2) circle (0.04); } {}
     \ifthenelse{ \equal{#5}{0} \AND \equal{#6}{0} } { \filldraw[blue!80] (#1,#2) circle (0.04); } {}
   }
   \end{scope} 
}
\title{Partial Compactification of Stability Manifolds \\ via Massless Semistable Objects}
\author{Nathan Broomhead, David Pauksztello, David Ploog, Jon Woolf}
\dedicatory{Dedicated to Mike Prest on the occasion of his seventieth birthday.}
\begin{document}

\begin{abstract}
We introduce two extensions of the space of Bridgeland stability conditions of a triangulated category. First we consider lax stability conditions where semistable objects are allowed to have mass zero but still have a phase. The subcategory of massless objects is thick and there is an induced Bridgeland stability on the quotient category. We study deformations of lax stability conditions. Second we consider the space arising by identifying lax stability conditions which are deformation-equivalent with fixed charge. This second space is stratified by stability spaces of Verdier quotients of the triangulated category by thick subcategories of massless objects. We illustrate our results through examples in which the Grothendieck group has rank $2$. For these, our extended stability spaces can be explicitly described and related to the wall-and-chamber structure of the stability space.
\end{abstract}

\keywords{Lax stability condition, triangulated category, massless semistable object}

\subjclass[2020]{18G80, 16E35, 14F08}

\maketitle

{\small
  \tableofcontents
}

\addtocontents{toc}{\protect\setcounter{tocdepth}{0}}

\vfill

{\centering
  \begin{tabular}{ccc}
  \scalebox{0.80}{\begin{tikzpicture}
\draw[thick, dashed, fill=blue!05] (0,0) circle (\ra);

\begin{scope}
\clip (0,0) circle (\ra);
\foreach \theta in {0,120,240} \hgfill{\theta}{\theta+120}{blue!10};
\foreach \theta in {0,60,...,300} \hgfill{\theta}{\theta+60}{blue!15};
\foreach \theta in {0,30,...,330} \hgfill{\theta}{\theta+30}{blue!20};
\foreach \theta in {0,15,...,345} \hgfill{\theta}{\theta+15}{blue!30};


\foreach \theta in {0,120,240} \hgline{blue}{\theta}{\theta+120};
\foreach \theta in {0,60,...,300} \hgline{blue}{\theta}{\theta+60};
\foreach \theta in {0,30,...,330} \hgline{blue}{\theta}{\theta+30};
\foreach \theta in {0,15,...,345} \hgline{blue}{\theta}{\theta+15};

\end{scope}

\end{tikzpicture}} &\hspace*{3em}&
  \scalebox{0.80}{\begin{tikzpicture}
\draw[thick, dashed, fill=red!5] (0,0) circle (\ra);

\begin{scope}
\clip (0,0) circle (\ra);
\hgfill{90}{230}{red!10};
\hgfill{90}{170}{red!15};
\hgfill{90}{130}{red!20};
\hgfill{90}{110}{red!25};

\hgfill{90}{-50}{red!10};
\hgfill{90}{10}{red!15};
\hgfill{90}{50}{red!20};
\hgfill{90}{70}{red!25};

\hgfill{95}{100}{red!30};
\hgfill{100}{110}{red!30};
\hgfill{110}{130}{red!30};
\hgfill{130}{170}{red!30};
\hgfill{170}{230}{red!30};

\hgfill{85}{80}{red!30};
\hgfill{80}{70}{red!30};
\hgfill{70}{50}{red!30};
\hgfill{50}{10}{red!30};
\hgfill{10}{-50}{red!30};

\hgfill{230}{310}{red!30};

\hgline{red}{95}{100};
\hgline{red}{100}{110};
\hgline{red}{110}{130};
\hgline{red}{130}{170};
\hgline{red}{170}{230};

\hgline{red}{85}{80};
\hgline{red}{80}{70};
\hgline{red}{70}{50};
\hgline{red}{50}{10};
\hgline{red}{10}{-50};

\hgline{red}{230}{310};

\hgline{red}{90}{95};
\hgline{red}{90}{100};
\hgline{red}{90}{110};
\hgline{red}{90}{130};
\hgline{red}{90}{170};
\hgline{red}{90}{230};
\hgline{red}{90}{-50};
\hgline{red}{90}{10};
\hgline{red}{90}{50};
\hgline{red}{90}{70};
\hgline{red}{90}{80};
\hgline{red}{90}{85};

\end{scope}

\end{tikzpicture}}\\
  Ginzburg algebra $\CC = \Db(\Ginzburg A_2)$ &&
  Bound path algebra $\CC = \Db(\Lambda_{1,2,0})$
\end{tabular}

\medskip
Examples of quotient stability spaces $\StabQ{\CC}^*$ up to $\C$ action; see \cref{fig:A2}.
\par
}
\newpage

\section*{Glossary}

\newcommand{\glossp}[1]{\hfill p~\pageref{#1}}

\subsection*{Slicings}

\begin{compacthang}
\item $\Slice{\CC}$, the set of locally finite slicings on $\CC$ \glossp{def:space of slicings}
\item $P \in \Slice{\CC}$ is \defn{adapted} to thick $\NN \subset \CC$ if $P$ restricts to $\NN$ and $P(I) \cap \NN \subset P(I)$ a Serre subcategory for all $I=[\phi,\phi+1)$ and $I=(\phi,\phi+1]$ \glossp{def:adapted slicing}
\item $P \in \Slice{\CC}$ is \defn{well adapted} if also the quotient slicing $P_{\CC/\NN}$ is locally finite \glossp{def:well-adapted slicing}
\end{compacthang}

\subsection*{Charges}

\begin{compacthang}
\item $v\colon K(\CC) \to \Lambda$, a surjective homomorphism onto lattice $\Lambda$ with fixed inner product \glossp{subsec:charges}
\item $\Hom{\Lambda/\Lambda_\NN}{\C}$, the subset of charge maps $\Hom{\Lambda}{\C}$ vanishing on $\Lambda_\NN$ \glossp{subsec:charges}
\item $\Hom{\Lambda_\NN}{\C}$, the charges on $\Lambda_\NN$, subset of $\Hom{\Lambda}{\C}$ via the inner product \glossp{subsec:charges}
\end{compacthang}

\subsection*{The spaces}

\begin{compacthang}
\item $\Stab{\CC} \subseteq \Slice{\CC} \times \Hom{\Lambda}{\C}$, the set of stability conditions
      whose charge map factors as $Z \colon K(\CC) \xrightarrow{v} \Lambda \to \C$; also called \defn{strict} stability conditions here
      \glossp{subsec:stability conditions}
\item $\cm\colon \Stab{\CC} \to \Hom{\Lambda}{\C}$, the charge map, also for larger spaces
\item $\sigma = (P,Z)$ satisfies \defn{$\delta$-lax support} if $\exists K>0$ with $|Z(c)| \geq \norm{c}/K$ for all massive, indecomposable $c\in\CC$ with $\phi^+_\sigma(c) - \phi^-_\sigma(c) < 2\delta$ \glossp{def:lax stability condition}
\item $\LaxStab{\CC} \subseteq \Slice{\CC} \times \Hom{\Lambda}{\C}$, the space of \defn{lax stability conditions} (objects can have mass 0, and the $\delta$-lax support property holds for some $\delta>0$) \glossp{def:space of lax stability conditions}
\item $\StabL{\CC} = \LaxStab{\CC} \cap \overline{\Stab{\CC}}$, the \defnn{lax closure} of $\Stab{\CC}$ \glossp{def:StabL}
\item $\StabQ{\CC} = \StabL{\CC}/{\sim}$, the \defnn{quotient stability space} of $\Stab{\CC}$: $\sigma \sim \sigma' \iff \NN_\sigma = \NN_{\sigma'}$ and $\sigma, \sigma'$ induce the same strict stability condition on $\CC/\NN$ \glossp{def:StabQ}
\end{compacthang}
\[
\begin{tikzcd}
  \Stab{\CC} \ar[hook]{r} & \StabL{\CC} \ar[hook]{r} \ar[twoheadrightarrow]{d} & \LaxStab{\CC} \ar[hook]{r} &
  \Slice{\CC} \times \Hom{\Lambda}{\CC} \ar{d}{\cm} \\
  & \StabQ{\CC} & & \Hom{\Lambda}{\C}
\end{tikzcd}
\]

\begin{compacthang}
\item $\LaxStabN{\CC}{\NN}$ the set of lax stability conditions with massless subcategory $\NN$ \glossp{def:space of lax stability conditions}
\item $\StabL{\CC}^* = \StabL{\CC} \setminus \{ (P,0) \in \StabL{\CC} \}$, the entirely massless stratum removed \glossp{sec:orbit closures}
\item $\PStab{\CC} = \Stab{\CC} / \C$ and $\PStabL{\CC} = \big(\StabL{\CC} \setminus \cm^{-1}(0)\big) / \C$
   \glossp{rem:projective stability space} and p~\pageref{sec:orbit closures}
\item Analogously $\StabLN{\CC}{\NN}$ and $\StabQN{\CC}{\NN}$ and $\PStabQ{\CC}$.
\end{compacthang}

\subsection*{Analysis}

\begin{compacthang}
\item $\norm{-}_{\sigma,\delta} \colon \Hom{\Lambda}{\C} \to [0,\infty]$, generalised semi-norms ($\delta\geq0$) \glossp{subsec:semi-norms}
\item $B_\epsilon^\delta(\sigma) \coloneqq \{ (Q,W) : d(P,Q)<\epsilon \ \text{and}\ \norm{W-Z}_{\sigma,\delta} < \sin(\pi \epsilon) \} \subset \Slice{\CC}\times \Hom{\Lambda}{\C}$ \glossp{subsec:semi-norm neighbourhoods}
\end{compacthang}

\subsection*{The maps}

\begin{compacthang}
\item $\massive{\NN} \colon \StabLN{\CC}{\NN} \to \Stab{\CC/\NN}$, the map sending a lax stability condition $(P,Z)$ with massless subcategory $\NN$ to the massive stability condition $\massive{\NN}(P,Z) = (P_{\CC/\NN},Z)$ on the quotient. It extends to a continuous map $\massive{\NN} \colon \overline{\StabLN{\CC}{\NN}} \to \StabL{\CC/\NN}$
  \glossp{prop:massive part} and p~\pageref{lem:rho and mu restrict}
\item $\restrict{\NN} \colon \StabLN{\CC}{\NN} \to \StabLN{\NN}{\NN}$, $(P,Z) \mapsto (P\cap\NN, 0)$, the restriction map \glossp{lem:rho and mu restrict}
  \newline
  It extends locally to $\restrict{\NN} \colon B_\epsilon^\delta(\sigma) \cap \StabL{\CC} \to \StabL{\NN}$ by \cref{lem:rho and mu restrict}.
\item $V(\CC,\NN) \subset \StabL{\CC}$ \defn{decoupling neighbourhood} of $\StabLN{\CC}{\NN}$ \glossp{def:open nbhd U}
\item $U(\CC,\NN) \subseteq \StabLN{\CC}{\NN} \times_{\Slice{\NN}} \StabL{\NN}$ \glossp{def:open nbhd U}
\item The local stratum-wise product decomposition of \cref{thm:StabL structure}:
\[
\begin{tikzcd}[column sep=large]
  \StabLN{\CC}{\NN} \ar[equals]{r} \ar[hook]{d}
& \StabLN{\CC}{\NN} \ar{d}{\id \times \restrict{\NN}} \ar{r}{\massive{\NN}\times \restrict{\NN}}
& \Stab{\CC/\NN} \times \StabLN{\NN}{\NN} \ar[hook]{d}
\\
 V(\CC,\NN)
& U(\CC,\NN) \ar[swap,hook']{l}{\deform{\NN}} \ar[hook]{r}{\massive{\NN}\times\id}
& \Stab{\CC/\NN}\times \StabL{\NN}
\end{tikzcd}
\]
\end{compacthang}

\addtocontents{toc}{\protect\setcounter{tocdepth}{1}}

\newpage

\section{Introduction}

\renewcommand\thetheorem{\Alph{theorem}}

\noindent
The space of stability conditions on a non-zero triangulated category is non-compact when non-empty: the mass of an object may tend to zero or infinity, and the phases of objects may tend to infinity. As such, the study of extensions of spaces of stability conditions has become an area of intensive investigation, see for example \cite{BDL20,Bolognese23,BMS24,HJR24,HR25,Liu26}. We introduce two  such extensions or `partial compactifications': the {\em lax closure of the stability space} and the {\em quotient stability space}. We compare these with some of the above cited approaches in \cref{sec:comparisons}. Our motivation stems from the following features:
\begin{itemize}
\item neighbourhoods of boundary strata in the lax closure have a simple product structure (see \cref{thm:A} below), which provides useful information about the boundary of the stability space;
\item the quotient stability space is always contractible (see \cref{thm:B} below), so may be a useful stepping stone in establishing the conjectured contractibility of stability spaces;
\item in the rank $2$ case, there is a close connection between our boundary strata and the walls in the wall-and-chamber structure of the stability space (see \cref{thm:C} below); and,
\item both our partial compactifications carry the same group actions as the stability space.
\end{itemize}
Indeed, the second feature constitutes our primary motivation: we sought a better geometric setting for our previous proofs that certain stability spaces are contractible (see \cite{BPP16,QW18}).
We anticipate that the connections between the boundary strata and wall-and-chamber structure can be generalised to higher rank cases.

In this article, we extend the stability manifold by allowing the mass of a semistable object to be zero while it still has a well-defined phase.
Doing so, we add boundary strata parametrised by the thick subcategories of massless objects.
The points of these boundary strata can be interpreted as stability conditions on the quotient categories. 
In some sense, our approach is the closest to Bridgeland's original definition of stability condition and therefore the topology of our extension maintains the closest direct link to that of the stability manifold.
However, the extension comes with a cost: our extension loses its complex manifold structure, but interesting topology is maintained as a `stratified space', and generalisations of  Bridgeland's key deformation theorem hold.
Our approach of generalising stability conditions by allowing massless objects generalises the `very weak stability conditions' of \cite{BMS16}, \cite{PT} and is closely related to the `weak stability conditions' of \cite{CLSY}; see \cref{subsec:very weak}.

\subsection{Overview}

Let $\kk$ be a field and $\CC$ be a Hom-finite $\kk$-linear triangulated category. Let $v\colon K(\CC) \to \Lambda$ be a surjective homomorphism from its Grothendieck group onto a lattice, \ie a free abelian group of finite rank. Write $\Slice{\CC}$ for the space of locally finite slicings of $\CC$. 
The space of stability conditions $\Stab{\CC}$ is the subspace of $\Slice{\CC}\times \Hom{\Lambda}{\C}$ consisting of pairs $(P,Z)$ with $Z(c) \in \R_{>0}e^{i\pi \phi}$ whenever $0\neq c\in P(\phi)$, and which satisfy the \defn{support property} (see \cref{subsec:stability conditions}). We will often call these \defn{strict} stability conditions to distinguish them from the following lax generalisation.

\subsubsection*{The lax closure}

A \defn{lax pre-stability condition} consists of a pair $(P,Z)$ with $Z(c) \in \R_{\geq 0}e^{i\pi \phi}$ whenever $c\in P(\phi)$, \ie semistable objects may be \defn{massless} ($Z(c)=0$). 
It is a \defn{lax stability condition} if it additionally satisfies a modified version of the support property. 
Because uniform mass dilation is permitted, allowing masses to tend to zero is equivalent to allowing masses to tend to infinity.

The full subcategory $\NN$ of massless objects for a lax stability condition $(P,Z)$, called the \defn{massless subcategory}, is thick. 
There is an induced stability condition on the quotient $\CC/\NN$ with the same charge $Z$ and for which the semistable objects of phase $\phi$ are those in the isomorphism closure of $P(\phi)$ in $\CC/\NN$. 
More precisely, the induced stability condition has charge $Z$ considered as an element of the subspace $\Hom{\Lambda/\Lambda_\NN}{\C} \subset \Hom{\Lambda}{\C}$ where $\Lambda_\NN$ is the saturation of the subgroup $\{ v(c) \mid c\in \NN\}$ of $\Lambda$. It satisfies the classic support property with respect to the homomorphism $K(\CC/\NN) \to \Lambda/\Lambda_\NN$ induced from $v$. 
In particular, a lax stability condition can be thought of as a compatible pair of a locally finite slicing on a thick subcategory $\NN$ and a stability condition on $\CC/\NN$.
This provides one approach to resolving the technical difficulties in glueing stability conditions in \cite{CP10}.

As we are interested in the boundary of $\Stab{\CC}$, we further restrict to the \defn{lax closure}, $\StabL{\CC}$, which is the subset of the closure of $\Stab{\CC}$ in $\Slice{\CC}\times \Hom{\Lambda}{\C}$ consisting of lax stability conditions.
To each thick subcategory $\NN$ corresponds the \defn{stratum} $\StabLN{\CC}{\NN}$, the subset of pairs $(P,Z) \in \StabL{\CC}$ with massless subcategory $\NN$.
The extreme cases $\NN=0$ and $\NN=\CC$ correspond to the strict stability manifold  and the set of locally finite slicings  arising as limits of slicings of strict stability conditions.
Each stratum fibres over $\Stab{\CC/\NN}$ with the fibres encoding massless phases.
The following theorem encapsulates the structure of $\StabL{\CC}$; see \cref{thm:StabL structure summary} for more details.

\begin{theorem}
\label{thm:A}
The lax closure $\StabL{\CC}$ of $\Stab{\CC}$ is a topological space with a decomposition 
\[
\StabL{\CC} = \bigsqcup_{\NN \in \Thick{\CC}} \StabLN{\CC}{\NN}
\]
into (possibly empty) strata $\StabLN{\CC}{\NN}$ indexed by thick subcategories $\NN \subseteq \CC$. There is a continuous map $\cm \colon \StabL{\CC}\to\Hom{\Lambda}{\CC}$ sending $\StabLN{\CC}{\NN}$ to $\Hom{\Lambda/\Lambda_\NN}{\C}$. 

 The strata $\StabLN{\CC}{\NN}$ are locally closed subsets of $\StabL{\CC}$, with $\StabLN{\CC}{0} = \Stab{\CC}$ being open and dense. The set of strata forms a poset with height bounded by $\rk(\Lambda)$ under 
\[
\StabLN{\CC}{\NN} \leq \StabLN{\CC}{\MM} 
\iff \StabLN{\CC}{\NN} \cap \overline{\StabLN{\CC}{\MM}} \neq \emptyset
\iff \StabLN{\NN}{\MM} \neq \emptyset.
\]
The stratum $\StabLN{\CC}{\NN}$ has an almost-stratum-preserving deformation retract neighbourhood homeomorphic to an open subset of $\Stab{\CC/\NN} \times \StabL{\NN}$ via a homeomorphism mapping points in $\StabL{\CC,\MM}$ to $\Stab{\CC/\NN}\times \StabL{\NN,\MM}$.
\end{theorem}

To interpret the final sentence in \cref{thm:A}, first decompose a lax stability condition with massless subcategory $\NN$ into its massive and massless part, on $\CC/\NN$ and $\NN$ respectively, leading to an open embedding
$\StabLN{\CC}{\NN} \inj \Stab{\CC/\NN} \times \StabLN{\NN}{\NN}$. Then we claim that this product decomposition extends to a neighbourhood of the stratum $\StabLN{\CC}{\NN}$. Intuitively, the massive and massless parts of the theory `de-couple' and can be treated independently of each other when objects in $\NN$ have sufficiently small mass.

In the case that $\CC$ has only hearts with finitely many indecomposable objects, the lax closure is the the closure of the stability manifold in $\Slice{\CC} \times \Hom{K(\CC)}{\C}$ 
(see \cref{prop:boundary support}). However, this is not true in general, and even fails in codimension one --- see \cref{subsec:nilrep}.

The union of all strata of (real) codimension at most one  is a manifold with boundary. There is a boundary stratum for each massless subcategory $\NN$ with $\rk \Lambda_\NN=1$. It is an open subset of $\Stab{\CC/\NN} \times \R$ consisting of pairs of an induced stability condition on $\CC/\NN$ and a `massless phase'. The latter is the common phase of the non-zero objects in a length heart  in $\NN$. It must be compatible with the slicing of the induced stability condition; for each induced stability condition there is a (possibly empty) open subset of compatible massless phases.

\subsubsection*{Quotient stability spaces}

The \defn{quotient stability space} $\StabQ{\CC}$ is the topological quotient of the lax closure obtained by forgetting the phases of massless objects. More precisely, two points of $\StabL{\CC}$ are identified if they have the same massless subcategory $\NN$ and induce the same point of $\Stab{\CC/\NN}$.
It is again a stratified space but has a nicer structure than the lax closure --- strata are complex manifolds and the frontier condition holds. 
It is contractible, and boundary points can be interpreted as stability conditions on certain quotient categories of $\CC$. The following theorem encapsulates its structure; see \cref{thm:stabq structure} for more details.

\begin{theorem}
\label{thm:B}
The quotient stability space $\StabQ{\CC}$ is a topological space with a decomposition into (possibly empty) strata $\StabQN{\CC}{\NN}$ indexed by thick subcategories $\NN \subseteq \CC$:
\[
\StabQ{\CC} = \bigsqcup_{\NN \in \Thick{\CC}} \StabQN{\CC}{\NN} .
\]
There is a continuous map $\cm \colon \StabQ{\CC}\to \Hom{\Lambda}{\C}$ with discrete fibres mapping $\StabQN{\CC}{\NN}$ to $\Hom{\Lambda/\Lambda_\NN}{\C}$. The space $\StabQ{\CC}$ is contractible: uniform mass rescaling defines an almost-stratum-preserving deformation retraction of $\StabQ{\CC}$ onto $\StabQN{\CC}{\CC} \cong \{*\}$. 

The stratum $\StabQN{\CC}{\NN}$ is locally closed, canonically embeds as an open subset of $\Stab{\CC/\NN}$, is a complex manifold of dimension $\rk(\Lambda/\Lambda_\NN)$, and $\StabQN{\CC}{0} = \Stab{\CC}$ is open and dense. The decomposition satisfies the frontier condition, \ie the closure of a stratum is a union of strata, and the resulting poset of strata ordered by inclusion of closures has height at most $\rk(\Lambda)$.
\end{theorem}

Unfortunately, we do not know whether taking the quotient of $\StabL{\CC}$ respects the local product structure required to have decoupling of the massive and massless parts of the theory for quotient stability spaces, see \cref{rem:quotient-decoupling}. Even the situation when $\rk \Lambda_\NN=1$ can be subtle: each point of the associated stratum corresponds to an induced stability condition on $\CC/\NN$ and the topology in a neighbourhood depends on the subset of compatible massless phases for objects of $\NN$ --- see \cref{sec:codim1}.

We consider stability spaces of smooth projective curves to illustrate the restrictions we impose on boundary points; see also \cref{subsec:dense phase case}. 
The stability space $\Stab{X}$ of a strictly positive genus smooth complex projective curve $X$ is isomorphic to $\U\times \C$, where $\U$ is the strict upper half-plane in $\C$ (\cite[Thm.~2.7]{Macri07}). The boundary points $\Q\times \C$ where the masses of line bundles vanish do not give lax stability conditions because the slicings do not converge as we approach them. Nor do the points $(\R-\Q)\times \C$ appear because no objects become massless at these points.
Here $\Stab{X} = \StabL{X} = \StabQ{X}$, and our constructions don't add anything. This shows that any extension of the stability space including points where our lax support property fails is likely to have rather complicated local geometry. 
It would be pleasant to have a partial compactification including the points $\Q\times \C$, but the topology on it would have to allow for the slicings to vary discontinuously. Our techniques rely on the convergence of slicings so prohibit consideration of such boundary points.

In contrast, in the genus zero case, there is one boundary stratum in $\StabL{\PP^1}$ for each $n\in\Z$, corresponding to the vanishing mass of the line bundle $\mathcal{O}(n)$, and finally the deepest stratum where the charge vanishes entirely; see \cref{subsec:projective line} for details.

\subsubsection*{The rank $2$ case}

The stability manifold $\Stab{\CC}$ comes equipped with natural actions of the universal cover $G$ of $\GL$ and of $\Aaut{\Lambda}{\CC}$. These actions extend continuously to both the lax closure and the quotient stability space, see \cref{subsec:group actions}.
In \cref{sec:orbit closures} we describe the closure of the $G$-orbit of a stability condition $\sigma$ in $\StabQ{\CC}$ in terms of the phase diagram of $\sigma$, \ie the set of `occupied' phases for which there is a non-zero semistable object. 
This is a key ingredient of \cref{sec:2d case} in which we illustrate our results in various rank two examples.

The $G$-action preserves semistable objects, hence so does the free action of the subgroup $\C$. Thus the wall-and-chamber structure of $\Stab{\CC}$ descends to a partition of the \defn{projective stability space} $\PStab{\CC} = \Stab{\CC}/\C$ into open chambers and real codimension one walls between them.

Suppose $\CC$ contains a length heart with exactly two simple objects up to isomorphism.
We call the images of free $G$-orbits in $\PStab{\CC}$ \defn{cells}; these are biholomorphic to $G/\C \cong \U$.
The images of non-free $G$-orbits consist of single points and the heart in such an orbit consists of semistable objects of a single phase, and so is a length abelian category, which by assumption has two simple objects $s$ and $t$, say. There is a one-parameter family consisting of images of orbits in which $s$ and $t$ are semistable of the same phase, parametrised by the ratio $m(s)/m(t) \in \R_{>0}$, which defines a real analytic curve in $\PStab{\CC}$, which we call a \defn{cell-wall}.
Thus, in the rank two case, we obtain a refinement of the wall-and-chamber structure, which encapsulates the HRS tilting theory and provides a relationship with the boundary strata of the \defn{projective quotient stability space}, $\PStabQ{\CC}$.
The following theorem summarises the structure and relationship, for more details and illustrations see \cref{sec:2d case}.
This description is of independent interest: note that (1) and (2) below are statements about the Bridgeland stability space $\PStab{\CC}$.

\begin{theorem} \label{thm:C}
Suppose $\CC$ is a triangulated category containing a length heart with exactly two simple objects up to isomorphism. Then
\begin{enumerate}
\item (\cref{lem:cell-walls-algebraic-hearts}) There is a one-to-one correspondence between the cell-walls of $\PStab{\CC}$ and the length hearts of $\CC$ up to shift.
\item (\cref{lem:linear-chains}) Each chamber of $\PStab{\CC}$ is a linear chain of cells separated by cell-walls. If the chain is doubly infinite, the chamber is biholomorphic to $\C$; otherwise, it is biholomorphic to $\U$.
\item (\cref{prop:massless stables = algebraic simples}) The massless stable objects in $\StabL{\CC} \setminus \StabLN{\CC}{\CC}$ are precisely those which are simple in a length heart whose (right) simple HRS tilt is also length. 
\item (\cref{cor:rank 2 boundary points}) The boundary points of $\PStabQ{\CC}$ are in bijection with the massless stable objects, up to isomorphism and shift.
\end{enumerate}
\end{theorem}

To interpret the final statement, the cell-wall parametrised by $m(s)/m(t) \in \R_{>0}$ has two boundary points corresponding to $m(s) = 0$ and $m(t) = 0$.

To see the relationship with HRS tilting theory, recall that the \defn{exchange graph} has one vertex for each length heart in $\CC$ and an edge whenever two hearts are related by a(n irreducible) simple HRS tilt (see \cite{HRS96,KQ15,Woolf10}). The \defn{projective exchange graph} is obtained by taking the quotient of the exchange graph under the action of the shift. This quotient has one vertex for each length heart up to shift and thus for each cell-wall, and one edge for each cell containing two cell-walls. As such, there is a natural embedding of the projective exchange graph in $\PStab{\CC}$.

\subsection{Methods}

To describe the local structure of the lax closure, we generalise the deformation theorem for stability conditions to the lax setting. This requires us to study slicings glued from a slicing on a thick subcategory and on its quotient, which may be of independent interest; cf.\ \cite{CP10,GKR04}. 

We say a pair $(P_\NN,P_{\CC/\NN})$ of slicings of $\NN$ and $\CC/\NN$ is compatible with a slicing $P$ of $ \CC$ if $P_\NN(\phi) \subseteq P(\phi) \subseteq P_{\CC/\NN}(\phi)$ as full subcategories of $\CC$ for each $\phi\in \R$. The key result we use to construct deformed slicings is the following.

\begin{theorem}
\label{thm:D}
Let $\NN$ be a thick subcategory of $\CC$. Then:
\begin{enumerate}
\item (\cref{cor:uniqueness of compatibility}) There is at most one slicing $P$ compatible with each pair $(P_\NN,P_{\CC/\NN})$.
\item (\cref{prop:quotient slicing} and \cref{cor:compatible slicings}) A slicing $P$ is compatible with a pair if and only if the HN-factors of any object of $\NN$ are in $\NN$ and the intersection $P(I)\cap \NN$ is a Serre subcategory of $P(I)$ for each interval $I\subset \R$ of strict length one.
\item (\cref{prop:glueing slicings}) The subset of pairs in $\Slice{\NN}\times \Slice{\CC/\NN}$ compatible with some slicing in $\Slice{\CC}$ is open. 
\end{enumerate}
\end{theorem}

This result enables the following generalisation of Bridgeland's deformation theorem for stability conditions.
The proof is quite technical. One issue is that our notion of lax support is tested on massive objects of sufficiently small HN-width, rather than just on semistable objects as in the strict notion; see \cref{def:lax stability condition}. Moreover, we have no control over the number of Jordan--H\"older factors of semistable objects because slices are quasi-abelian categories. We address this by making the assumption that $\CC$ is Hom-finite, see \eg \cref{lem:lax and strict support}. We do not assume that thick quotients of $\CC$ are Hom-finite, and indeed they need not be. 
The following combines the more detailed \cref{prop:deformations in boundary} and \cref{prop:deformation off stratum}.
\begin{theorem}
\label{thm:E}
Let $(P,Z) \in \StabL{\CC,\NN}$ be a lax stability condition. Then given 
\begin{enumerate}
\item a charge $W$ sufficiently close to $Z$ and 
\item a lax stability condition $(Q_\NN,W|_\NN) \in \StabL{\NN,\MM}$ with $Q_\NN$ sufficiently close to $P|_\NN$
\end{enumerate}
there is a unique lax stability condition $(Q,W) \in \StabL{\CC,\MM}$ with $d(P,Q)<1$ and $Q|_\NN=Q_\NN$. 
\end{theorem}

When $\NN=0$, hence also $\MM=0$, there is a unique choice of $(Q_\NN,W|_\NN)$ because $\StabL{0,0}$ is a point. In this case \cref{thm:D} reduces to Bridgeland's theorem that deformations of the charge lift uniquely to deformations of stability conditions. More generally, when $\NN\neq 0$ one has to specify not only the deformation of the charge but also of the slicing on $\NN$, since the latter is not governed by the charge. This extra information is provided by the choice of lax stability condition $(Q_\NN,W|_\NN)$ on $\NN$.

\subsubsection*{Discrepancies between strict and lax stability conditions}

In many ways lax stability conditions behave much as strict stability conditions do. However, in some respects lax stability conditions have weaker categorical and analytical properties than strict stability conditions. We highlight the differences. Let $\sigma = (P,Z)$ be a pair of a slicing $P$ on $\CC$ and a charge $Z\in\Hom{\Lambda}{\C}$. 
\begin{enumerate}
\item The definitional distinction is that the mass of a non-zero semistable object $c\in P(\phi)$ is required to be positive in the strict case but can be zero if $\sigma$ is lax.
\item The support property for a stability condition $\sigma$ implies that the slicing $P$ is locally finite. By contrast, the support property for a lax stability condition $\sigma$ does not imply local finiteness of the slicing, which we therefore impose as a separate condition. 
Another difference is that the lax support property tests over objects with small HN-width rather than semistable objects only.
\item The slices $P(\phi)$ are always abelian length categories if $\sigma$ is a strict stability condition. If $\sigma$ is a lax stability condition then we only know that $P(\phi)$ is a quasi-abelian length category; in particular, we don't know whether the Jordan--H\"older property holds.
\item A strict stability condition $\sigma$ induces a norm $\norm{\cdot}_\sigma$ on $\Hom{\Lambda}{\C}$ whereas a lax stability condition induces a family of semi-norms $\norm{-}_{\sigma,\delta}$, indexed by$\delta\geq0$.
\item Finally, the space of strict stability conditions $\Stab{\CC}$ is a complex manifold modelled on $\Hom{\Lambda}{\C}$. The space of lax stability conditions $\StabL{\CC}$ is a stratified space. 
\end{enumerate}

\begin{table}
\begin{center}
\begin{tabular}{llcc} \toprule
    &                                 & Stability conditions    & Lax stability  conditions  \\ \midrule
(1) & Masses are                      & positive;               & non-negative. \\
(2) & Slicings are locally finite     & automatically;          & by additional condition. \\
(3) & Slices $P(\phi)$ are            & abelian categories;     & quasi-abelian categories. \\  
(4) & Analysis on $\Hom{\Lambda}{\C}$ & a norm $\norm{-}_\sigma$ & family of semi-norms $\norm{-}_{\sigma,\delta}$. \\
(5) & Geometric structure:            & complex manifold;       & stratified space.\\ \bottomrule
\end{tabular}
\end{center}
\caption{Summary of discrepancies between ordinary and lax stability conditions} \label{table:discrepancies}
\end{table}

\subsection*{Acknowledgments}
We are indebted to Arend Bayer who kindly provided a counterexample to a claim in a previous version.
We are grateful to the London Mathematical Society and the Mathematisches Forschungsinstitut Oberwolfach for financial support through their `Research in Pairs' schemes, grants no.\ 41434 and 1815p. We thank Lasse Rempe for helpful discussions about Riemann surface theory and Maria Azam, Stefan Roberts, Antonios-Alexandros Robotis and the anonymous referee for comments.
The second named author was supported by EPRSC grant no.\ EP/V050524/1, which also supported research visits by the other authors.

\renewcommand\thetheorem{\arabic{section}.\arabic{theorem}}

\section{Notation and preliminaries}
\label{sec:notation}

\subsection{Quasi-abelian categories}
\label{subsec:quasi-abelian}

It has been known since Tom Bridgeland's original article \cite{Bridgeland08} that quasi-abelian categories are important in the theory of stability conditions. In this text they appear even more prominently because slices of lax stability conditions are in general not abelian categories (as with stability conditions) but only quasi-abelian; see slicing $P_t$ in \cref{ex:slicings}. 

A \defn{quasi-abelian category} is an additive category with kernels and cokernels and such that the pullback of a strict epimorphism is a strict epimorphism, and the pushout of a strict monomorphism is a strict monomorphism. A \defn{strict morphism} is one for which the canonical morphism from its coimage to its image is an isomorphism. A quasi-abelian category is \defn{length} if it satisfies the bi-chain condition, \ie is both artinian and noetherian. See \eg \cite{Schneiders99}.

\begin{example}[{\cite[Exs.~3.5 and 6.9]{BHLR20}}] \label{ex:non-unique decomposition}
Let $\cE$ be the full $\kk$-linear subcategory of finite-dimensional $\kk$-vector spaces generated by $\kk^2$ and $\kk^3$. In this example, any non-zero map $\kk^2\to\kk^3$ has kernel and cokernel $0$. Therefore the coimage $\kk^2$ and image $\kk^3$ are not isomorphic and the morphism is not strict. In particular, $\kk^2$ and $\kk^3$ are simple objects of $\cE$, and $\cE$ is a length quasi-abelian category. However, the Jordan--H\"older property fails: $\kk^6 = \kk^2\oplus\kk^2\oplus\kk^2 = \kk^3\oplus\kk^3$ has two Jordan--H\"older filtrations of different lengths, with non-isomorphic factors. Likewise, the Krull--Schmidt property fails because of the non-unique decompositions.
\end{example} 

As the above example shows, decompositions into indecomposable objects exist in arbitrary length quasi-abelian categories but may lack the Krull--Schmidt properties. However, when the length quasi-abelian category arises as in the next section from a slicing of a Hom-finite triangulated category we recover the Krull--Schmidt property, see \cref{prop:KRS property}.

\subsection{Slicings}
\label{subsec:slicings}

Let $\CC$ be a triangulated category with shift functor $c\mapsto c[1]$. A \defn{slicing} $P$ on $\CC$ is a collection of full additive subcategories $P(\phi)$ for each $\phi \in \R$ such that
\begin{enumerate}
\item $P(\phi+1) = P(\phi)[1]$ for all $\phi\in \R$;
\item $\Homm{\CC}{c}{c'}=0$ whenever $c\in P(\phi)$ and $c'\in P(\phi')$ with $\phi > \phi'$;
\item each $0\neq c\in \CC$ admits a finite filtration \ie a finite sequence of morphisms
\[
\begin{tikzcd}
0=c_0 \ar{r} & c_1 \ar{r} \ar{d}        & c_2 \ar{r} \ar{d}       & \cdots \ar{r} & c_{n-1} \ar{r} & c_n = c \ar{d}\\
                    & a_1 \ar[dashed]{ul}  & a_2 \ar[dashed]{ul} &                    &                       & a_n \ar[dashed]{ul}
\end{tikzcd}
\]
with cones $a_i \in P(\phi_i)$ where $\phi_1 > \phi_2 > \cdots > \phi_n$.
\end{enumerate}
The subcategory $P(\phi)$ is called the \defn{slice of phase $\phi$}, its objects are \defn{semistable of phase $\phi$}, the filtration is the \defn{Harder--Narasimhan filtration} (henceforth abbreviated to HN filtration) of $c$, and the objects $a_i$ are the \defn{semistable factors} of $c$. The filtration, in particular the semistable factors, are determined uniquely up to isomorphism when they exist. The \defn{maximal and minimal phases} of $0\neq c \in \CC$ are $\phi^+(c) \coloneqq \phi_1$ and $\phi^-(c) \coloneqq \phi_n$, respectively. 

For any interval $I \subseteq \R$, we denote by $P(I)$ the full subcategory of $\CC$ on those objects whose semistable factors have phases in $I$. When $I=(a,b)$ we omit the outer brackets and simply write $P(a,b)$; likewise for other types of intervals. Equivalently, $P(I)$ is the extension closure of the slices $P(\phi)$ for all $\phi\in I$.

An interval $I\subset \R$ is said to have \defn{strict length one} if either $I = (\phi,\phi+1]$ or $I = [\phi,\phi+1)$ for some $\phi\in \R$.
The category $P(I)$ is abelian when $I$ is an interval of strict length one (see below) and quasi-abelian when $I$ is contained in such an interval \cite[\S4]{Bridgeland07}. 

A \defn{stable} object is a semistable object that is simple in its slice, \ie a semistable object of some phase $\phi$ with no proper strict subobjects in the quasi-abelian category $P(\phi)$. 

For any $\phi\in \R$, the inclusions of $P(-\infty, \phi)$ and $P(-\infty,\phi]$ into $\CC$ have respective left adjoints $H_P^{<\phi}$ and $H_P^{\leq \phi}$. Dually, the inclusions of $P(\phi,\infty)$ and $P[\phi,\infty)$ into $\CC$ have respective right adjoints $H_P^{>\phi}$ and $H_P^{\geq \phi}$. We use the notation 
\[
  H_P^{[\phi,\psi]} \coloneqq H_P^{\geq\phi} \circ H_P^{\leq\psi} = H_P^{\leq\psi} \circ H_P^{\geq\phi}
  \qquad\text{and}\qquad
  H_P^\phi \coloneqq H_P^{[\phi,\phi]}
\] 
and similarly for open and semi-open intervals.
For $c\in\CC$ the $P$-semistable factors of $H_P^{[\phi,\psi]}(c)$ are precisely the $P$-semistable factors of $c$ with phases in the interval $[\phi,\psi]$.
The functor $H_P^\phi \colon \CC \to P(\phi)$ maps an object to its semistable factor of phase $\phi$, if present, and to zero else.

When $I$ is an interval of strict length one, the subcategory $P(I)$ is the heart of a bounded t-structure on $\CC$ and $H^I_P \colon \CC \to P(I)$ is the associated cohomological functor taking triangles in $\CC$ to long exact sequences in $P(I)$. 

\begin{remark}
The right adjoint to the inclusion $P(0,\infty) \inj \CC$ is the truncation \emph{below} associated to the bounded t-structure on $\CC$ with heart $P(0,1]$, \ie it is the functor classically denoted $\tau^{\leq 0}$. This unfortunate clash of notation arises because the factors in a HN filtration are ordered by decreasing phase. To avoid confusion we use the notation $H^{>0}_P$ instead.
\end{remark}

If $Q$ is another slicing with $Q(-\infty,\phi) \subseteq P(-\infty,\psi)$ then 
$H_Q^{<\phi} = H_Q^{<\phi} H_P^{<\psi}$
and similarly for closed intervals, and the dual cases. We omit the subscript $P$ from the notation when the slicing is understood from the context. 

The slicing $P$ is \defn{locally finite} if there is some $\epsilon>0$ such that $P(\phi-\epsilon,\phi+\epsilon) $ is a length quasi-abelian category for each $\phi\in \R$. Let $\Slice{\CC}$ \label{def:space of slicings}denote the space of locally finite slicings on $\CC$, topologised by the metric
\[
d(P,Q) = \sup_{0\neq c\in\CC} \max \left\{ |\phi_P^-(c)-\phi_Q^-(c)|, |\phi_P^+(c)-\phi_Q^+(c)| \right\}.
\] 
For a locally finite slicing each slice $P(\phi)$ is a quasi-abelian length category. It follows that each semistable object has a finite composition series whose factors are stable objects. However, we do not know in general that the set of these stable factors, nor the multiplicity with which each occurs, are well defined --- see \cite{Enomoto21} for a discussion of when a quasi-abelian category satisfies the Jordan--H\"older Theorem. 

We will often assume that the category $\CC$ is Hom-finite $\kk$-linear over a field $\kk$.
Then the coarser additive decomposition into indecomposable objects is essentially unique; more precisely the Krull--Schmidt property holds:

\begin{proposition}
\label{prop:KRS property}
Let $P$ be a slicing of a Hom-finite $\kk$-linear triangulated category $\CC$ and $I\subset \R$ contained in an interval of strict length one. Then the following hold:
\begin{enumerate}
\item An object $c\in P(I)$ is indecomposable if and only if $\End{\CC}{c}$ is a local algebra.
\item Each object of $P(I)$ decomposes as a finite direct sum of indecomposable objects, and such a decomposition is unique up to permutation of the summands.
\end{enumerate}
\end{proposition}

\begin{proof}
By assumption, we have $P(I) \subseteq P(\phi,\phi+1]$ or $P(I) \subseteq P[\phi,\phi+1)$. Without loss of generality we assume the former. Since $P(\phi,\phi+1]$ is a Hom-finite $\kk$-linear abelian category, each object satisfies the bi-chain condition by \cite[Lem.~5.2]{Krause15}; an object in $P(\phi,\phi+1]$ is indecomposable if and only if its endomorphism algebra is local by \cite[Prop.~5.4]{Krause15}; each object of $P(\phi,\phi+1]$ decomposes as a finite direct sum of indecomposable objects by \cite[Thm.~5.5]{Krause15}; and such a decomposition is unique up to permutation of the summands by \cite[Thm.~4.2]{Krause15}. The same then holds for $P(I)$ because it is a full additive subcategory which is closed under direct summands.
\end{proof}

\begin{remark}
\label{rem:hom-finite quotients}
Hom-finite categories may have quotients which are not Hom-finite, as happens in \cref{ex:thick subcategories}. When we assume $\CC$ is Hom-finite in later sections we do not assume that any of its proper quotients are so.
\end{remark}

\subsection{Charges}
\label{subsec:charges}

Fix a finite rank lattice $\Lambda$ and a surjective homomorphism $v\colon K(\CC) \to \Lambda$. For $Z\in \Hom{\Lambda}{\C}$ and $c\in \CC$ we abuse notation by writing $Z(c)$ for $Z(v([c]))$. 

We also fix an inner product $\langle \cdot , \cdot \rangle$ on $\Lambda \otimes \R$ and denote the associated norm by $\norm{\cdot}$. Again we abuse notation by writing $\norm{c}$ for $\norm{v([c])}$. The induced operator norm on $\Hom{\Lambda}{\C}$ is
\[ \norm{Z} \coloneqq \sup \{ |Z(\lambda)| : \lambda\in \Lambda\otimes\R, \ \norm{\lambda}=1 \} . \]
Throughout this article, we equip the product $\Slice{\CC} \times \Hom{\Lambda}{\C}$ with the metric
\[
d( (P,Z) , (Q,W) ) \coloneqq \max\{ d(P,Q) , \norm{W-Z} \}
\]
made from the slicing metric and the operator norm. All subspaces of $\Slice{\CC} \times \Hom{\Lambda}{\C}$ carry the same metric, and are topologised as metric spaces. The second projection is denoted $\cm \colon \Slice{\CC} \times \Hom{\Lambda}{\C}$, $\sigma = (P,Z) \mapsto \cm(\sigma) = Z$, and likewise for all subspaces of the product.

Suppose $\NN$ is a thick subcategory of $\CC$. The Verdier quotient $\CC/\NN$ is a triangulated category with the same objects as $\CC$ and a morphism $c' \to c$ in $\CC/\NN$ given by a roof $c' \leftarrow c'' \to c$ where the cone of the first morphism $c' \leftarrow c''$ is in $\NN$; see, for example, \cite{GM96, Hartshorne66}.

The homomorphism $K(\NN) \to K(\CC)$ induced from the inclusion $\NN \to \CC$ need not be injective, but its cokernel is $K(\CC/\NN)$. Let $\Lambda_\NN \subset \Lambda$ be the minimal primitive sublattice containing the image of $K(\NN) \to K(\CC) \to \Lambda$, so that $\Lambda/\Lambda_\NN$ is again a lattice. Let $v_\NN \colon K(\NN) \to \Lambda_\NN$ and $v_{\CC/\NN} \colon K(\CC/\NN) \to \Lambda/\Lambda_\NN$ denote the induced homomorphisms. The map $v_\NN$ may not be surjective but it always has finite index.

The norm of $\Lambda\otimes\R$ restricts to a norm on $\Lambda_\NN\otimes \R$ and also induces a norm on the quotient
\[
\Lambda/\Lambda_\NN \otimes \R \cong (\Lambda\otimes \R) / (\Lambda_\NN\otimes \R)
\]
defined by $\norm{ \lambda + \Lambda_\NN\otimes \R } = \inf\{ \norm{ \lambda+\alpha } \colon \alpha \in \Lambda_\NN \otimes \R \}$. 
Alternatively, this is given by identifying $\Lambda/\Lambda_\NN \otimes \R$ with the orthogonal complement of $\Lambda_\NN\otimes \R$ and taking the restriction of $\norm{\cdot}$.
 
\begin{remark}
\label{rmk:charge space splitting}
The orthogonal projection $\Lambda\otimes \R \to \Lambda_\NN\otimes \R$ induces a splitting $\Hom{\Lambda_\NN}{\C} \inj \Hom{\Lambda}{\C}$ and we use this throughout to identify $\Hom{\Lambda_\NN}{\C}$ with its image in $\Hom{\Lambda}{\C}$.
\end{remark}

\subsection{Spaces of stability conditions}

We work with stability conditions on $\CC$ whose charges factor through $v \colon K(\CC) \to \Lambda$ and satisfy the support property. (See \cref{subsec:stability conditions} for the definition.) We denote the space of these by $\Stab{\CC}$, leaving the lattice $\Lambda$ implicit. 

Stability conditions on thick subcategories $\NN$ of $\CC$, and on the quotients $\CC/\NN$ by these play a prominent role. The charges of these are always understood to factor through $v_\NN$ and $v_{\CC/\NN}$ respectively. We denote the respective spaces of stability conditions by $\Stab{\NN}$ and $\Stab{\CC/\NN}$, again omitting the lattices from the notation.

\section{Restriction, descent and glueing of slicings}
\label{sec:slicings and thick subcategories}

\noindent
Fix a thick subcategory $\NN \subseteq \CC$. We investigate the relationship between slicings of $\CC$, $\NN$ and $\CC/\NN$. In this section we do not assume that slicings are locally finite unless stated otherwise. 

\begin{definition}
A slicing $P$ of $\CC$ is \defn{compatible} with a pair $(P_\NN,P_{\CC/\NN})$ of slicings of $\NN$ and $\CC/\NN$ if there are inclusions of objects $P_\NN(\phi) \subseteq P(\phi) \subseteq P_{\CC/\NN}(\phi)$ for each $\phi \in \R$.
\end{definition}

\begin{proposition}
\label{prop:uniqueness of compatibility}
Let $P$ be a slicing of $\CC$ compatible with a pair of slicings $(P_\NN,P_{\CC/\NN})$ of $\NN$ and $\CC/\NN$. Then for any $\phi\in\R$, the following hold:
\begin{enumerate}
\item $P_\NN(\phi)=P(\phi)\cap\NN$ and $P_{\CC/\NN}(\phi)$ is the isomorphism closure of $P(\phi)$ in $\CC/\NN$;
\item $P(\phi) = P_{\CC/\NN} \cap P_\NN({>}\phi)\orth \cap {}\orth P_\NN({<}\phi)$.
\end{enumerate}
\end{proposition}

\begin{corollary}
\label{cor:uniqueness of compatibility}
For a slicing $P$ of $\CC$, there is at most one pair $(P_\NN,P_{\CC/\NN})$ compatible with $P$.

Conversely, given a pair $(P_\NN,P_{\CC/\NN})$, there is at most one slicing $P$ compatible with the pair.
\end{corollary}

\begin{proof} Fix $\phi \in \R$.
\begin{enumerate}[wide, labelwidth=!, labelindent=1em]
\item
By compatibility, $P_\NN(\phi) \subseteq P(\phi)$ so that the HN filtration of any $c\in \NN$ with $P_\NN$-semistable factors is also the HN filtration with $P$-semistable factors. The uniqueness of HN filtrations implies $P(\phi)\cap \NN \subseteq P_\NN(\phi)$, hence $P_\NN(\phi)=P(\phi)\cap \NN$ for each $\phi\in \R$. 

Let $P(\phi)_{\CC/\NN}$ be the isomorphism closure of $P(\phi)$ in $\CC/\NN$. We claim $P_{\CC/\NN}(\phi) = P(\phi)_{\CC/\NN}$. Since $P(\phi) \subseteq P_{\CC/\NN}(\phi)$ it is clear that $P(\phi)_{\CC/\NN} \subseteq P_{\CC/\NN}(\phi)$. Moreover, again by uniqueness, the HN filtration of any $c\in \CC$ with $P$-semistable factors descends to the HN filtration of $c$ with $P_{\CC/\NN}$-semistable factors if we simply ignore any factors in $\NN$. Thus if $c\in P_{\CC/\NN}(\phi)$ it has a HN filtration in $\CC$ with all factors in $\NN$ except for one factor, say $c'$, in $P(\phi)$. Thus $c\cong c'$ in $\CC/\NN$ and $P_{\CC/\NN}(\phi) \subseteq P(\phi)_{\CC/\NN}$, establishing the claim.
\item
The right-hand side of the equation translates to
\[ c\in P(\phi) \iff c\in P_{\CC/\NN}(\phi) \text{ and } \Homm{\CC}{P_\NN({>}\phi)}{c} = 0 = \Homm{\CC}{c}{P_\NN({<}\phi)} . \]
By definition $P(\phi) \subseteq P_{\CC/\NN}(\phi)$. We saw above that any $c\in P_{\CC/\NN}(\phi)$ has a HN filtration in $\CC$ with all factors in $\NN$ apart from one factor in $P(\phi)$. Hence the truncations $H_P^{>\phi}(c)$ and $H_P^{<\phi}(c)$ are in $\NN$ and so vanish precisely when $\Homm{\CC}{P_\NN({>}\phi)}{c} = 0 = \Homm{\CC}{c}{P_\NN({<}\phi)}$ holds. \qedhere
\end{enumerate}
\end{proof}

\begin{lemma}
\label{lem:local-finiteness of compatible slicing}
If a slicing $P$ of $\CC$ is compatible with a pair $(P_\NN,P_{\CC/\NN})$ of locally finite slicings then $P$ is also locally finite.
\end{lemma}

\begin{proof}
Let $I \subset \R$ be an interval such that both $P_\NN(I)$ and $P_{\CC/\NN}(I)$ are quasi-abelian length categories. Let $a_0 \inj a_1 \inj \cdots \inj a$ be an increasing sequence of strict subobjects of $a$ in $P(I)$. 
This can be considered as an increasing sequence of strict subobjects in $P_{\CC/\NN}(I)$ via the quotient functor $\CC \to \CC/\NN$ since the strict exact structures on $P_\NN(I)$ and $P_{\CC/\NN}(I)$ are induced by the triangulated structures on $\CC$ and $\CC/\NN$, respectively, and the quotient functor is exact.
Since $P_{\CC/\NN}(I)$ is length this chain stabilises, \ie there is some $n\in \N$ such that $a_n\cong a_{n+1}\cong \cdots \cong a$ in $P_{\CC/\NN}(I)$. Pushing the sequence of strict monomorphisms out along $a_{n+1}\to a_{n+1}/a_n$ we obtain a commutative diagram in $P_\CC(I)$
\[
\begin{tikzcd}
a_{n+1} \ar[hook]{r} \ar[->>]{d} 	& a_{n+2}\ar[hook]{r} \ar[->>]{d} 	& \cdots \ar[hook]{r} & a \ar[->>]{d}\\
a_{n+1}/a_n \ar[hook]{r} 		& a_{n+2}/a_n\ar[hook]{r} 		& \cdots \ar[hook]{r} & a/a_n
\end{tikzcd}
\]
whose bottom row is an increasing sequence of strict subobjects of $a/a_n$ in $P_\NN(I)$. Since the latter is length this bottom row also stabilises. It follows that the original sequence stabilises so that $P(I)$ is noetherian. The proof that it is artinian is dual.
\end{proof}

In the next sections we discuss the more subtle question of when compatible slicings exist.

\subsection{Restriction and descent}

We say that a slicing $P$ of $\CC$ \defn{restricts} to the thick subcategory $\NN$ if the semistable factors of objects of $\NN$ lie in $\NN$. Then the full subcategories $P_\NN(\phi) \coloneqq P(\phi)\cap \NN$ form a slicing of $\NN$. Deciding when $P$ \defn{descends} to a compatible slicing of $\CC/\NN$ is more involved. 

\begin{definition}
\label{def:adapted slicing}
A slicing $P$ of $\CC$ is \defn{adapted} to a thick subcategory $\NN$ if it restricts to $\NN$ and $P(I) \cap \NN$ is a Serre subcategory of $P(I)$ for each interval $I \subset \R$ of strict length one.
\end{definition}

\begin{lemma}
\label{lem:quotient semistables}
Let $P$ be a slicing of $\CC$ adapted to the thick subcategory $\NN$. The following two conditions are equivalent for objects $c\in P(\phi)$ and $b\in \CC$:
\begin{enumerate}[label=(\roman*)]
\item $b \cong c$ in $\CC/\NN$
\item the only semistable factor of $b$ not in $\NN$ is a factor $b_0\in P(\phi)$ with $b_0\cong c$ in $\CC/\NN$.
\end{enumerate}
\end{lemma}

\begin{proof}
(ii) \timplies (i): Clearly if $b$ has a unique semistable factor $b_0 \in P(\phi)$ not in $\NN$ then $b\cong b_0$ in $\CC/\NN$, and thus if $b_0\cong c$ in $\CC/\NN$ then also $b\cong c$ in $\CC/\NN$.

(i) \timplies (ii): We show the more general statement that, given $c\in \CC$ with $H^{<\phi}(c)$ and $H^{>\phi}(c)$ in $\NN$ and a morphism in $\Homm{\CC}{c}{b}$ or $\Homm{\CC}{b}{c}$ with cone in $\NN$, then $H^{<\phi}(b)$ and $H^{>\phi}(b)$ are also in $\NN$. The cases are similar, and we only consider the first in which there is an exact triangle $n[-1]\to c\to b \to n$ with $n \in \NN$. Applying the cohomological functor $H^{(\phi,\phi+1]}$ yields a long exact sequence in the heart $P(\phi,\phi+1]$
\[
\cdots \to H^{(\phi,\phi+1]}(c) \to H^{(\phi,\phi+1]}(b) \to H^{(\phi,\phi+1]}(n) \to \cdots.
\]
The assumptions on $c$ and $n$ imply that the first and third terms are in $P(\phi,\phi+1]\cap \NN$. Since this is a Serre subcategory of $P(\phi,\phi+1]$ so too is the middle term. For the same reason $H^{(\phi+i,\phi+i+1]}(b) \in \NN$ for all $i\in \N$, which implies $H^{>\phi}(b)\in \NN$. To show that $H^{<\phi}(b)\in \NN$ one proceeds similarly using the cohomological functor $H^{[\phi-1,\phi)}$. 
\end{proof}

\begin{proposition}
\label{prop:quotient slicing}
Let $P$ be a slicing of $\CC$. Then the following conditions are equivalent:
\begin{enumerate}[label=(\roman*)]
\item $P$ is adapted to $\NN$.
\item There is a pair $(P_\NN,P_{\CC/\NN})$ of slicings of $\NN$ and $\CC/\NN$ compatible with $P$.
\end{enumerate}
\end{proposition}

\begin{proof}
(ii) \timplies (i): Given a compatible pair $(P_\NN,P_{\CC/\NN})$, we saw above that $P_\NN(\phi)= P(\phi)\cap \NN$ so that $P$ restricts to $\NN$. Moreover, the quotient functor $\CC\to \CC/\NN$ restricts to an exact functor $P(\phi,\phi+1] \to P_{\CC/\NN}(\phi,\phi+1]$ between abelian categories with $P(\phi,\phi+1]\cap \NN$ as kernel. Hence the latter is a Serre subcategory for each $\phi\in \R$. The argument showing $P[\phi,\phi+1) \cap \NN$ is a Serre subcategory is similar. Thus $P$ is adapted to $\NN$.

(i) \timplies (ii): If $P$ is adapted to $\NN$ then the subcategories $P_\NN(\phi) \coloneqq P(\phi)\cap\NN$ define a slicing of $\NN$. We must show that $P$ also descends to a slicing of $\CC/\NN$. \Cref{prop:uniqueness of compatibility} shows that we must define $P_{\CC/\NN}(\phi)$ to be the closure of $P(\phi)$ under isomorphisms in $\CC/\NN$. By construction $P_{\CC/\NN}(\phi)$ is a full additive subcategory of $\CC/\NN$ for each $\phi\in \R$, satisfying $P_{\CC/\NN}(\phi+1) = P_{\CC/\NN}(\phi)[1]$. Moreover, ignoring any factors in $\NN$, the image in $\CC/\NN$ of the HN filtration of $0\neq c \in \CC$ with respect to the slicing $P$ provides a finite filtration with factors in these subcategories and with strictly decreasing phases. Therefore to show that $P_{\CC/\NN}$ is a slicing we must show that there are no non-zero morphisms in $\CC/\NN$ from $c\in P_{\CC/\NN}(\phi)$ to $c'\in P_{\CC/\NN}(\phi')$ when $\phi > \phi'$. 

It is enough to show that $\Homm{\CC/\NN}{c}{c'}=0$ for $c\in P(\phi)$ and $c'\in P(\phi')$. A morphism in $\Homm{\CC/\NN}{c}{c'}$ is represented by a roof $c \leftarrow b \to c'$ in $\CC$ where the cone on the left hand morphism is in $\NN$. By \cref{lem:quotient semistables}, $H^{>\phi}(b)$ and $H^{<\phi} (b)$ are in $\NN$ and moreover, $\Homm{\CC}{H^{>\phi}(b)}{c} = 0 = \Homm{\CC}{H^{>\phi}(b)}{c'}$ because $c\in P(\phi)$ and $c'\in P(\phi')$ for $\phi'<\phi$. Thus we can construct dashed arrows to form a commutative diagram in $\CC$
\[
\begin{tikzcd}
{} & b\ar{dl} \ar{d} \ar{dr} & {}\\
 c & H^{\leq \phi}(b)  \ar[dashed]{l}  \ar[dashed]{r}         & c' \\
{} & H^\phi(b) \ar{u} \ar[dashed]{ul} \ar[dashed]{ur} \ar{u} & {}
\end{tikzcd}
\]
in which the morphisms $b\to H^{\leq \phi}(b)$ and $H^\phi(b)\to H^{\leq \phi}(b)$ are the canonical ones. These are isomorphisms in $\CC/\NN$. Therefore the bottom roof $c \leftarrow H^\phi(b) \to c'$ represents the same morphism in $\CC/\NN$. Since $\Homm{\CC}{H^\phi(b)}{c'}=0$ we deduce $\Homm{\CC/\NN}{c}{c'}=0$ as required. 
\end{proof}

\begin{corollary}
\label{cor:slicing inequalities}
Let $P$ and $Q$ be slicings on $\CC$ that are adapted to $\NN$, let $P_\NN$, $Q_\NN$ be the restricted slicings of $\NN$ and $P_{\CC/\NN}$, $Q_{\CC/\NN}$ be the descended slicings of $\CC/\NN$. Then
\[ d(P_\NN,Q_\NN) \leq d(P,Q) \qquad\text{and}\qquad d(P_{\CC/\NN},Q_{\CC/\NN}) \leq d(P,Q) . \]
\end{corollary}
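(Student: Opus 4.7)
The first inequality is essentially tautological and I would dispose of it first. Since $P$ is adapted to $\NN$, it restricts to $\NN$, meaning the HN filtration of any nonzero $c\in\NN$ with respect to $P$ already has all factors in $\NN$, and therefore coincides with the HN filtration of $c$ in $\NN$ with respect to $P_\NN$. Hence $\phi_{P_\NN}^\pm(c)=\phi_P^\pm(c)$ for every nonzero $c\in\NN$, and similarly for $Q$. The supremum defining $d(P_\NN,Q_\NN)$ is then taken over a strictly smaller class of objects than the one defining $d(P,Q)$, so the required inequality is immediate.

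For the quotient inequality, the plan is to establish the following variational characterisation of the quotient phases: for every nonzero $c\in\CC/\NN$,
\[
\phi_{P_{\CC/\NN}}^+(c) \;=\; \inf\bigl\{\phi_P^+(\tilde c) : \tilde c\in\CC \text{ represents } c\bigr\},
\]
with the dual formula for $\phi^-$ as a supremum. The inequality ``$\leq$'' follows by observing that for any representative $\tilde c$, pushing the $P$-HN filtration of $\tilde c$ through the quotient functor and invoking Lemma~\ref{quotient semistables} identifies the $P_{\CC/\NN}$-HN filtration of $c$ with the subsequence of non-$\NN$ factors; in particular its top phase $\phi_{P_{\CC/\NN}}^+(c)$ is attained by one of the $P$-factors of $\tilde c$ and hence is at most $\phi_P^+(\tilde c)$. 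For the reverse inequality I would realise the infimum by truncation: given any representative $c'$, set $\alpha=\phi_{P_{\CC/\NN}}^+(c)$ and take $\tilde c\coloneqq H_P^{\leq\alpha}(c')$. The cone $H_P^{>\alpha}(c')[1]$ is built from $P$-semistable factors of $c'$ of phase $>\alpha$, all of which are in $\NN$ by the identification above, so the cone lies in $\NN$ and $\tilde c$ still represents $c$; the surviving non-$\NN$ factor of phase $\alpha$ then forces $\phi_P^+(\tilde c)=\alpha$.

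Given this characterisation, the corollary follows formally. Setting $\epsilon=d(P,Q)$ and choosing a representative $\tilde c$ of $c$ realising the infimum for $P$, one estimates
\[
\phi_{Q_{\CC/\NN}}^+(c) \;\leq\; \phi_Q^+(\tilde c) \;\leq\; \phi_P^+(\tilde c)+\epsilon \;=\; \phi_{P_{\CC/\NN}}^+(c)+\epsilon,
\]
and by swapping the roles of $P$ and $Q$ one obtains the symmetric estimate, hence $|\phi_{P_{\CC/\NN}}^+(c)-\phi_{Q_{\CC/\NN}}^+(c)|\leq\epsilon$. The argument for $\phi^-$ is completely dual, using $H_P^{\geq\beta}$ truncations in place of $H_P^{\leq\alpha}$. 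Taking the supremum over nonzero $c\in\CC/\NN$ then yields $d(P_{\CC/\NN},Q_{\CC/\NN})\leq d(P,Q)$. The only substantive step is the variational characterisation, and specifically the verification via Lemma~\ref{quotient semistables} that the discarded cone in the truncation lies in $\NN$; everything else is bookkeeping.
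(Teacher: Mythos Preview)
Your proof is correct, and for the first inequality it matches the paper exactly. For the quotient inequality, however, you take a genuinely different route.

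The paper works at the level of slices rather than individual objects. It uses the standard reformulation $d(P,Q)<\epsilon \iff P(\phi)\subset Q(\phi-\epsilon,\phi+\epsilon)$ for all $\phi$, and then observes that since $Q(\phi-\epsilon,\phi+\epsilon)\subset Q_{\CC/\NN}(\phi-\epsilon,\phi+\epsilon)$ and $P_{\CC/\NN}(\phi)$ is the isomorphism closure of $P(\phi)$ in $\CC/\NN$ (Proposition~\ref{uniqueness of compatibility}), the containment passes directly to the quotient. This is a four-line argument with no truncations and no appeal to Lemma~\ref{quotient semistables}.

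Your approach instead establishes the variational formula $\phi_{P_{\CC/\NN}}^+(c)=\inf_{\tilde c}\phi_P^+(\tilde c)$ over representatives, with attainment via truncation, and then compares phases object by object. This is correct and the variational characterisation is a pleasant statement in its own right, but it is more work: you need Lemma~\ref{quotient semistables} (or equivalently the description of the quotient HN filtration from the proof of Proposition~\ref{quotient slicing}) to see that the truncated cone lies in $\NN$, and you need to argue that the infimum is achieved. The paper's argument sidesteps all of this by never leaving the slice level.
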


\begin{proof}
The HN filtrations of $c\in\NN$ with respect to $P$ and to $P_\NN$ coincide. The first statement follows since the supremum defining $d(P_\NN,Q_\NN)$ is taken over a subset of that defining $d(P,Q)$. The second statement follows because
\begin{align*}
d(P, Q) < \epsilon 
& \iff P(\phi) \subseteq Q(\phi-\epsilon,\phi+\epsilon) & \forall \phi \in \R \\
& \implies P(\phi) \subseteq Q_{\CC/\NN}(\phi-\epsilon,\phi+\epsilon) & \forall \phi \in \R \\
& \iff P_{\CC/\NN}(\phi) \subseteq Q_{\CC/\NN}(\phi-\epsilon,\phi+\epsilon) & \forall \phi \in \R\\
& \iff d(P_{\CC/\NN}, Q_{\CC/\NN}) < \epsilon 
\end{align*}
where we have used the fact that $P_{\CC/\NN}(\phi)$ is the isomorphism closure of $P(\phi)$ in $\CC/\NN$, and similarly for $Q_{\CC/\NN}(\phi)$.
\end{proof}

If $P$ is a locally finite slicing adapted to a thick subcategory $\NN$ then the restriction $P_\NN$ is clearly locally finite. However, we do not know whether the slicing on the quotient $P_{\CC/\NN}$ is locally finite when $P$ is so. Therefore we introduce the following enhancement of \cref{def:adapted slicing}.

\begin{definition}
\label{def:well-adapted slicing}
A locally finite slicing $P$ of $\CC$ is \defn{well adapted} to a thick subcategory $\NN$ if it is adapted to it and the quotient slicing $P_{\CC/\NN}$ is locally finite.
\end{definition}

\begin{corollary}
\label{cor:compatible slicings}
Let $P$ be a slicing of $\CC$. Then the following conditions are equivalent:
\begin{enumerate}[label=(\roman*)]
\item $P$ is locally finite and well adapted to $\NN$.
\item There is a pair $(P_\NN,P_{\CC/\NN})$ of locally finite slicings of $\NN$ and $\CC/\NN$ compatible with $P$.
\end{enumerate}
\end{corollary}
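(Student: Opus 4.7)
The plan is to read off this corollary from Proposition~\ref{quotient slicing}, which gives the equivalence between adaptedness and the existence of a compatible pair, together with Lemma~\ref{local-finiteness of compatible slicing}, which transfers local finiteness from the pair $(P_\NN,P_{\CC/\NN})$ to $P$. The only additional ingredient is a short argument in the opposite direction, checking that $P_\NN$ inherits local finiteness from $P$.

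For the implication $(2)\Rightarrow(1)$, the existence of a compatible pair gives immediately that $P$ is adapted to $\NN$ by Proposition~\ref{quotient slicing}. Since the locally finite slicings $P_\NN$ and $P_{\CC/\NN}$ are compatible with $P$, Lemma~\ref{local-finiteness of compatible slicing} upgrades this to show that $P$ itself is locally finite. Combined with local finiteness of $P_{\CC/\NN}$, which is part of the hypothesis, this gives that $P$ is well-adapted in the sense of Definition~\ref{well-adapted slicing}.

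For the implication $(1)\Rightarrow(2)$, being well-adapted $P$ is in particular adapted, so Proposition~\ref{quotient slicing} supplies a compatible pair with $P_\NN(\phi)=P(\phi)\cap\NN$ and $P_{\CC/\NN}(\phi)$ the isomorphism closure of $P(\phi)$ in $\CC/\NN$. Local finiteness of $P_{\CC/\NN}$ is built into the well-adapted hypothesis, so the only remaining point---the main (though minor) technicality---is local finiteness of $P_\NN$. Given $\phi\in\R$, I would choose $\epsilon\in(0,\tfrac12)$ small enough that $P(\phi-\epsilon,\phi+\epsilon)$ is a length quasi-abelian category, which is possible since $P$ is locally finite. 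Then $(\phi-\epsilon,\phi+\epsilon)$ is contained in some strict length one interval $I$, and by adaptedness $P(I)\cap\NN$ is Serre in $P(I)$. A strict short exact sequence in $P(\phi-\epsilon,\phi+\epsilon)$ is in particular strict exact in $P(I)$, so the Serre property descends to give that $P(\phi-\epsilon,\phi+\epsilon)\cap\NN=P_\NN(\phi-\epsilon,\phi+\epsilon)$ is Serre in the length quasi-abelian category $P(\phi-\epsilon,\phi+\epsilon)$. Since any chain of strict sub- or quotient objects in a Serre subcategory is also such a chain in the ambient category, the subcategory is itself length. Hence $P_\NN$ is locally finite and the compatible pair has the required properties.
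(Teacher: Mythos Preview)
Your proof is correct and follows the same approach as the paper: both directions are read off from Proposition~\ref{quotient slicing} and Lemma~\ref{local-finiteness of compatible slicing}. The only difference is that where you give an explicit argument for local finiteness of $P_\NN$ via the Serre property, the paper simply asserts this as clear in the paragraph preceding Definition~\ref{well-adapted slicing}; your detour through the Serre condition is harmless but unnecessary, since the strict exact structure on $P_\NN(J)$ is inherited from triangles in $\NN\subset\CC$, so any chain of strict subobjects in $P_\NN(J)$ is already such a chain in $P(J)$.
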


\begin{proof}
If $P$ is locally finite and well adapted to $\NN$ then the restricted slicing $P_\NN$ and quotient slicing $P_{\CC/\NN}$ exist and are locally finite. The slicing $P$ is compatible with them. 

Conversely, if $P$ is compatible with the pair $(P_\NN,P_{\CC/\NN})$ of locally finite slicings then $P$ is locally finite by \cref{lem:local-finiteness of compatible slicing}, it is adapted to $\NN$ by \cref{prop:quotient slicing} and indeed is well adapted since $P_{\CC/\NN}$ is locally finite by assumption.
\end{proof}

We do not know whether being `well adapted' is a genuine condition on a locally-finite slicing, so we have the following question.
A partial answer is given in \cref{prop:massive stability condition}.

\begin{question} \label{q:locally finite}
Let $P$ be a locally finite slicing on $\CC$ and $\NN \subset \CC$ a thick subcategory such that $P$ descends to $\CC/\NN$. Is the slicing $P_{\CC/\NN}$ on the quotient also locally finite?
\end{question}

\subsection{Glueing}

Here we prove that a pair of slicings of $\NN$ and $\CC/\NN$ can be glued to a compatible locally finite slicing on $\CC$ if and only if a sufficiently close pair does. An important consequence is that the set of pairs of slicings which glue to a locally finite slicing is open. 

\begin{proposition}
\label{prop:glueing slicings}
Let $(Q_\NN,Q_{\CC/\NN})$ be a pair of slicings of $\NN$ and $\CC/\NN$. Then the following conditions are equivalent:
\begin{enumerate}[label=(\roman*)]
\item There is a compatible locally finite slicing $Q$ of $\CC$.
\item There is a locally finite slicing $P$ of $\CC$ compatible with a pair $(P_\NN,P_{\CC/\NN})$ such that $d(P_\NN,Q_\NN)<\epsilon$ and $d(P_{\CC/\NN},Q_{\CC/\NN})<\epsilon$ where $\epsilon>0$ is sufficiently small that the categories $P(\phi-2\epsilon,\phi+2\epsilon)$ are length for each $\phi\in \R$.
\end{enumerate}
\end{proposition}
%

The implication (i) \timplies (ii) is trivial: if there is a compatible locally finite slicing $Q$ then we set $P=Q$ and are done. The proof of the other direction is long, and we break it down into a number of results. We retain the notation and assumptions of the statement throughout this section. By \cref{prop:uniqueness of compatibility} there is at most one choice for the slicing: for $\phi\in \R$ the slice must be the full subcategory
 $Q(\phi) \coloneqq Q_{\CC/\NN}(\phi) \cap Q_\NN({>}\phi)\orth \cap {}\orth Q_\NN({<}\phi)$.
Clearly $Q(\phi+1) = Q(\phi)[1]$ for all $\phi \in \R$. Moreover, $Q(\phi)\cap \NN = Q_\NN(\phi)$ and $Q(\phi) \subseteq Q_{\CC/\NN}(\phi)$. We extend the notation to intervals $I\subset \R$ by defining $Q(I)$ to be the extension-closure of the $Q(\phi)$ for $\phi\in I$. By definition $Q(I) \subseteq Q_{\CC/\NN}(I)$ for any interval $I$.

\begin{lemma}
\label{lem:slicing distance}
For each $\phi \in \R$ we have $Q(\phi) \subseteq P(\phi-\epsilon,\phi+\epsilon)$.
\end{lemma}

\begin{proof}
Suppose $c\in Q(\phi)$. Then as $Q(\phi) \subseteq Q_{\CC/\NN}(\phi)\subseteq P_{\CC/\NN}(\phi-\epsilon, \phi+\epsilon)$ we know that $H_P^{\geq \phi+\epsilon}(c) , H_P^{\leq \phi-\epsilon}(c) \in \NN$. Thus $H_P^{\geq \phi+\epsilon}(c) \in Q_\NN({>}\phi)$ and $H_P^{\leq \phi-\epsilon}(c) \in Q_\NN({<}\phi)$ and so by the definition of $Q(\phi)$ we have
\[
\Homm{\CC}{H_P^{\geq \phi+\epsilon}(c)}{c} = 0 = \Homm{\CC}{c}{H_P^{\leq \phi-\epsilon}(c)}.
\]
It follows that $H_P^{\geq \phi+\epsilon}(c) = 0 = H_P^{\leq \phi-\epsilon}(c)$ so that $c\in P(\phi-\epsilon,\phi+\epsilon)$ as claimed.
\end{proof}

\begin{lemma}
\label{lem:reconstructed slicing}
If $c\in Q(\phi)$ and $c'\in Q(\phi')$ with $\phi > \phi'$ then $\Homm{\CC}{c}{c'}=0$.
\end{lemma}

\begin{proof}
Suppose $\gamma \in \Homm{\CC}{c}{c'}$. Since $c \in Q_{\CC/\NN}(\phi)$ and $c' \in Q_{\CC/\NN}(\phi')$ the morphism $\gamma$ vanishes in $\CC/\NN$. Hence it must factor through some $b\in \NN$. We therefore have a diagram 
\[
\begin{tikzcd}
c \ar{rr}{\gamma} \ar{dr} \ar[dashed]{dd} && c' \\
& b \ar{ur} \ar{dr} &\\
H_{Q_\NN}^{\geq \phi}(b) \ar{ur} && H_{Q_\NN}^{<\phi}(b) \ar{ll}[swap]{[1]}
\end{tikzcd}
\]
in which the upper triangle commutes and the lower triangle is exact. One of the vanishing requirements in the definition of $Q(\phi)$ implies $\Homm{\CC}{c}{H^{<\phi}(b)}=0$, and hence that there is a dashed morphism making the left hand triangle commute. We deduce $\Homm{\CC}{H^{\geq\phi}(b)}{c'}=0$ from the other Hom vanishing together with $\phi>\phi'$. Hence $\gamma=0$ as claimed.
\end{proof}

It remains to check that each $c\in \CC$ has a HN filtration with respect to $Q$. We do so by induction on the length of the HN filtration of $c$ in $\CC/\NN$ with respect to $Q_{\CC/\NN}$. The next result provides the base case.

\begin{lemma}
\label{lem:HN base case}
Suppose $c\in Q_{\CC/\NN}(\phi)$. Then $c$, considered as an object of $\CC$, has a HN filtration with respect to $Q$ with all factors in $\NN$ except for a single factor in $Q(\phi)$.
\end{lemma}

\begin{proof}
Let $0 < \epsilon < \frac{1}{2}$ be such that all $P(\phi-2\epsilon,\phi+2\epsilon)$ are locally finite. Set $\cat{A} \coloneqq P(\phi-\epsilon, \phi+\epsilon)$ and to begin with, assume $c\in \cat{A}$.

\Step{1} $c\in Q(\phi) \iff \Homm{\CC}{b}{c} = 0 = \Homm{\CC}{c}{d}$ for all $b\in \cat{A} \cap Q_\NN({>}\phi)$, $d\in \cat{A} \cap Q_\NN({<}\phi)$.

One direction is clear: when $c\in Q(\phi)$ the vanishing conditions follow from the definition of $Q$. For the other direction let $b\in Q_\NN({>}\phi)$. Applying $\Homm{\CC}{-}{c}$ to the exact triangle
\[
H_P^{\geq \phi+\epsilon}(b) \to b \to H_P^{< \phi+\epsilon}(b) \to H_P^{\geq\phi+\epsilon}(b)[1]
\]
and using $\Homm{\CC}{H_P^{{\geq} \phi+\epsilon}(b)}{c} = 0$ from $c \in \cat{A} \subseteq P(< \phi+\epsilon)$ shows that any morphism $b \to c$ factors through $h \coloneqq H_P^{< \phi+\epsilon}(b)$. The assumption $d(P_\NN,Q_\NN)<\epsilon$ implies
\[
H_P^{\geq\phi+\epsilon}(b)[1] \in P_\NN(\geq \phi+\epsilon+1) \subseteq Q_\NN(\geq \phi+1) \subseteq Q_\NN({>}\phi).
\]
Since $Q_\NN({>}\phi)$ is extension-closed, the above triangle shows that $h \in Q_\NN({>}\phi)$. Moreover, $h \in \cat{A}$ because $b\in Q_\NN({>}\phi) \subseteq P_\NN({>} \phi-\epsilon)$. Therefore $\Homm{\CC}{h}{c}=0$ by assumption, and hence $\Homm{\CC}{b}{c}=0$ too. The dual argument gives $\Homm{\CC}{c}{d}=0$ for all $d\in Q_\NN({<}\phi)$. Hence $c\in Q(\phi)$ as claimed.

\Step{2} $c \in \cat{A}$ has a HN filtration with respect to $Q$ with all factors in $\NN$ except for a single factor in $Q(\phi)$ isomorphic to $c$ in $\CC/\NN$.

Let $b$ be a maximal strict subobject of $c$ in the subcategory $\cat{A}\cap Q_\NN({>}\phi)$ of $\cat{A}$. We can always find such a $b$ (possibly zero) because $\cat{A}$ is a quasi-abelian length category. Then $c'=c/b$ has no non-zero strict subobjects in $\cat{A}\cap Q_\NN({>}\phi)$ because if $b' \inj c'$ is such a strict subobject we can pullback to obtain a commutative diagram
\[
\begin{tikzcd}
b \ar[equals]{d} \ar[hook]{r} & b'' \ar[hook]{d}\ar[->>]{r} & b'\ar[hook]{d} \\
b \ar[hook]{r} & c\ar[->>]{r} & c'
\end{tikzcd}
\]
whose rows are strict short exact sequences and whose vertical morphisms are strict monomorphisms. In particular $b''$ is a strict subobject of $c$ in $\cat{A}\cap Q_\NN({>}\phi)$ and by maximality of $b$ we deduce that $b\cong b''$ and therefore that $b'=0$. By assumption $b$ has a HN filtration with respect to $Q_\NN$. Concatenating this with $b \inj c$, we obtain a finite filtration of $c$ in $\cat{A}$ whose quotients are a sequence of $Q_\NN$-semistable objects of strictly decreasing phases in $\cat{A}\cap Q_\NN({>}\phi)$, except for the final quotient $c'$ which has no non-zero strict subobjects in $\cat{A} \cap Q_\NN({>}\phi)$. 

The dual argument constructs a finite filtration of this final quotient $c'$ whose first term $c''$ has no non-zero strict quotients in $\cat{A} \cap Q_\NN({<}\phi)$ and whose other quotients form a sequence of $Q_\NN$-semistable objects of strictly decreasing phases in $\cat{A}\cap Q_\NN({<}\phi)$. It follows that $c''$ cannot have any non-zero strict subobjects in $\cat{A} \cap Q_\NN({>}\phi)$ either, for any such would lift to a subobject of $c'$. To summarise we have constructed a strict subquotient $c''$ of $c$ in $\cat{A}$ such that
\begin{enumerate}
\item $c''\cong c$ in $\CC/\NN$, in particular $c''\in Q_{\CC/\NN}(\phi)$;
\item $c''$ has no non-zero strict subobjects in $\cat{A} \cap Q_\NN({>}\phi)$;
\item $c''$ has no non-zero strict quotients in $\cat{A} \cap Q_\NN({<}\phi)$.
\end{enumerate}
Because the image of any morphism in the quasi-abelian category $\cat{A}$ is a strict subobject of the target, and dually that the coimage is a strict quotient of the source, we conclude that $\Homm{\CC}{b}{c''} = 0 = \Homm{\CC}{c''}{d}$ for all $b\in \cat{A} \cap Q_\NN({>}\phi)$ and $d\in \cat{A} \cap Q_\NN({<}\phi)$. Hence $c''\in Q(\phi)$ by Step 1 and concatenating the filtrations of $b$ and of $c'$ using iterated applications of the octahedral axiom yields the desired HN filtration.

%
%

\Step{3} The general case, \ie remove the assumption that $c\in \cat{A}$ from Step 2.

By Step 1, $c_0 \coloneqq H_P^{(\phi-\epsilon,\phi+\epsilon)}(c) \in \cat{A}$ has a HN $Q$-filtration. 
From $H_P^{\geq \phi + \epsilon} H_P^{> \phi - \epsilon}(c) = H_P^{\geq \phi + \epsilon}(c)$ and the octahedral axiom, we get a commutative diagram
\[
\begin{tikzcd}
H_P^{\geq \phi+\epsilon}(c) \ar{r} \ar[equals]{d}& c_1 \ar{r} \ar{d}& H_Q^{>\phi}(c_0) \ar{d} \\
H_P^{\geq \phi+\epsilon}(c) \ar{r} \ar{d}& H_P^{>\phi-\epsilon}(c) \ar{r} \ar{d}& c_0 \ar[]{d}\\
0 \ar{r} & H_Q^{\leq \phi}(c_0) \ar[equals]{r}& H_Q^{\leq \phi}(c_0)
\end{tikzcd}
\]
whose rows and columns extend to exact triangles. Moreover $H_P^{\geq \phi+\epsilon}(c) \in \NN$ because 
\[
c\in Q_{\CC/\NN}(\phi) \subseteq P_{\CC/\NN}(\phi-\epsilon, \phi+\epsilon).
\]
 Indeed $H_P^{\geq \phi+\epsilon}(c) \in Q_\NN({>}\phi)$ because $d(P_\NN,Q_\NN)<\epsilon$. Considering the top row, and recalling that $H_Q^{>\phi}(c_0)\in \NN$ too, shows that $c_1 \in Q_\NN({>}\phi)$. Therefore, by considering the middle column, we can construct a HN filtration for $c$ with $Q$-semistable factors by concatenating the filtrations of $c_1$ and of $H_Q^{\leq \phi}(c_0)$.
\end{proof}

\begin{proof}[Proof of \cref{prop:glueing slicings}]
Suppose $c\in \CC$ has a HN filtration of length $k\in \N$ in $\CC/\NN$ with $Q_{\CC/\NN}$-semistable factors. We show that $c$ has a HN filtration in $\CC$ with $Q$-semistable factors. If $k=0$ then $c\in \NN$ and we simply take the HN filtration with respect to $Q_\NN$. If $k=1$ then the result holds by \cref{lem:HN base case}. Therefore we assume that $k>1$ and that the result holds for any object with a strictly shorter HN filtration in $\CC/\NN$. Choose a representative $b\in Q_{\CC/\NN}(\psi)$ for the highest phase factor of $c$ so that there is an exact triangle $b\to c \to d \to b[1]$ in $\CC$. By induction we may assume both $b$ and $d$ have HN filtrations with $Q$-semistable factors. In particular there is an exact triangle 
\[
H^{\geq \psi}_Q(b) \to b \to H^{<\psi}_Q(b) \to H^{\geq \psi}_Q(b)[1]
\]
in $\CC$. Since $H^{\geq \psi}_Q(b) \in Q({\geq}\psi) \subseteq Q_{\CC/\NN}({\geq}\psi)$ and $H^{<\psi}_Q(b)\in Q({<}\psi) \subseteq Q_{\CC/\NN}({<}\psi)$ we deduce that $H^{<\psi}_Q(b)\in \NN$ and that $H^{\geq \psi}_Q(b) \to b$ is an isomorphism in $\CC/\NN$. Therefore we may assume $b\in Q({\geq}\psi)$. Having done so we argue similarly with $d$. There is an exact triangle
\[
H^{\geq \psi}_Q(d) \to d \to H^{<\psi}_Q(d) \to H^{\geq \psi}_Q(d)[1]
\]
where now $H^{\geq \psi}_Q(d)\in \NN$ and $d \to H^{<\psi}_Q(d)$ is an isomorphism in $\CC/\NN$. Hence there is a commutative diagram
\[
\begin{tikzcd}
b \ar{r} \ar[equals]{d}& b' \ar{r} \ar{d}& H_Q^{\geq\psi}(d) \ar{d} \\
b\ar{r} \ar{d}& c\ar{r} \ar{d}& d \ar[]{d}\\
0 \ar{r} & H_Q^{<\psi}(d) \ar[equals]{r}& H_Q^{<\psi}(d) 
\end{tikzcd}
\]
whose rows and columns extend to exact triangles. By considering the top row we see that $b'\in Q({\geq}\psi)$. Thus the middle column shows that we may assume, by judicious choice of representatives, that $b\in Q({\geq}\psi)$ and $d\in Q({<}\psi)$. Having done so, we obtain a $Q$ HN filtration for $c$ by concatenating those of $b$ and $d$.

Clearly $d(P,Q)<\epsilon$ because $Q(\phi) \subseteq P(\phi-\epsilon,\phi+\epsilon)$ for all $\phi\in \R$ by \cref{lem:slicing distance}. Finally $Q$ is locally finite because $Q(\phi-\epsilon,\phi+\epsilon) \subseteq P(\phi-2\epsilon,\phi+2\epsilon)$ and the latter is length.
\end{proof}

\begin{example}
\label{ex:slicings}
Let $\CC = \Db(\PP^1)$ be the bounded derived category of coherent sheaves on the projective line $\PP^1$. All complexes in $\CC$ decompose into direct sums of their cohomologies (this holds for any smooth curve) and, moreover, all coherent sheaves decompose into direct sums of line bundles $\cO(n)$ and torsion sheaves; the latter have the skyscraper sheaves $\cO_x$ for $x\in\PP^1$ as their minimal non-zero subsheaves.

For a slicing in $\CC$, up to shift and direct sums, various $\cO(n)$ and $\cO_x$ occur as factors of HN filtrations and, conversely, the decomposition properties of $\CC$ imply that HN filtrations exist trivially for any family of subcategories $P(\phi)$ with $P(\phi+1) = P(\phi)[1]$ and Hom-vanishing $\Hom{P({>}\phi)}{P(\phi)} = 0$ and such that all $\cO_x$ and $\cO(n)$ are in the heart $P(0,1]$, again up to shift. Thus the following assignments for $\phi\in(0,1]$ give slicings on $\CC$:
\[
P_t(\phi) = 
\begin{cases}
  \Clext{ \cO(n) : n\in \Z } & \phi=\frac{1}{2}\\
  \Clext{ \cO_x : x\in \PP^1 } & \phi=1\\
  0 & \textrm{else};
\end{cases}
, \qquad
P_g(\phi) = 
\begin{cases}
  \Clext{ \cO(n) : n\in \Z } & \phi = \frac{1}{\pi} \arg(-n+i) \in (0,1)\\
  \Clext{ \cO_x : x\in \PP^1 } & \phi = 1\\
  0 & \textrm{else};
\end{cases}
\]
\[
P_b(\phi) = 
\begin{cases}
  \Clext{ \cO } & \phi=\frac{1}{2}\\
  \Clext{ \cO_x, \cO(n), \cO(-n)[1] : x\in\PP^1, n\in\N_{>0} } & \phi=1\\
  0 & \textrm{else};
\end{cases}
, \qquad
P_c(\phi) =
\begin{cases}
  \coh(\PP^1) & \phi=1\\
  0 & \textrm{else}.
\end{cases}
\]
$P_t$ separates torsion sheaves and line bundles into two slices. $P_g$ is the geometric slicing induced by the classical slope of coherent sheaves $\mu(A) = \deg(A)/\rk(A)$; see \cref{ex:seminorm balls asymmetric}. The slicing $P_b$ occurs in the boundary of the stability space; see \cref{ex:norm example} and \cref{subsec:projective line}.

These three slicings are locally finite. Note that the slice $P_t(\frac{1}{2})$ contains the infinite chain $\cdots\to\cO(-1)\to\cO\to\cO(1)\to\cdots$. Nonetheless $P_t$ is locally finite because any non-zero morphism $\cO(n)\to\cO(m)$ for $n<m$ is not strict in $P_t(\frac{1}{2})$ since it has image $\cO(n)$ and coimage $\cO(m)$. In particular, each $\cO(n)$ is simple in the quasi-abelian but not abelian category $P_t(\frac{1}{2})$.

In $P_c$, a single slice contains the whole heart, which is not length and therefore the slicing is not locally finite.
See \cite{GKR04} for the classification of stability conditions and bounded t-structures on $\CC$.
\end{example}

\begin{example}
\label{ex:thick subcategories}
We continue the above example with two thick subcategories of $\CC = \Db(\PP^1)$: first the subcategory $\NN = \NN_\cO = \thick{}{\cO}$ generated by the trivial line bundle. Because every line bundle is an exceptional object, $\NN \cong \Db(\kk)$ and $\NN$ is an admissible subcategory, \ie the inclusion $\NN \inj \CC$ has both adjoints. Hence there is a canonical equivalence $\CC/\NN \cong \NN^\perp = \thick{}{\cO(-1)}$.

Next, let $\TT$ be the thick subcategory of $\CC$ generated by all torsion objects in $\coh(\PP^1)$; it is thick but not admissible. The quotient $\CC/\TT$ is not Hom-finite: the objects $\cO$ and $\cO(-1)$ are isomorphic in $\CC/\TT$ but the morphisms $x,y\colon \cO(-1) \to \cO$ for $x\neq y \in \PP\Hom{\cO(-1)}{\cO} \simeq \PP^1$, induce non-equivalent elements of $\End{\CC/\TT}{\cO}$.
%
\end{example}

\section{Lax stability conditions and quotient categories}
\label{sec:lax and quotient stability conditions}

\subsection{Stability conditions}
\label{subsec:stability conditions}

\noindent
Let $\CC$ be a triangulated category and $v\colon K(\CC) \to \Lambda$ a surjective homomorphism from the Grothendieck group to a finite rank lattice.

A \defn{pre-stability condition} on $\CC$ is a pair $(P, Z)$ consisting of a slicing $P$, which is not assumed to be locally finite, and a charge $Z\in \Hom{\Lambda}{\C}$ such that $0\neq c \in P(\phi)$ implies $Z(c) = m(c) \exp(i\pi \phi)$ with $m(c) \in \R_{>0}$. An object $c\in P(\phi)$ is said to be \defn{semistable of phase $\phi$} and $m(c)$ is its \defn{mass}. The \defn{mass} of any $c \in \CC$ is defined to be
$ m(c) \coloneqq m(c_1) + \cdots + m(c_k) $
where $c_i \in P(\phi_i)$ for $i=1,\ldots,k$ are the semistable factors of $c$ with respect to the slicing $P$.

A \defn{stability condition} is a pre-stability condition $\sigma=(P,Z)$ such that there is $K>0$ with
\[
m(c) = |Z(c)| \geq \frac{1}{K}\norm{c} 
\]
for all semistable $c\in \CC$. This latter condition is referred to as the \defn{support property} \cite[p7]{KS08}; it is independent of the choice of norm because $\dim (\Lambda\otimes \R) <\infty$. The support property has three important consequences. First, and most obviously, it implies that the \defn{infimal mass} 
\[
\mu_\sigma = \inf \{ m(c) : 0\neq c\in \CC\} \geq \frac{1}{K} \inf\{ \norm{\lambda} \colon 0\neq \lambda \in \Lambda\}
\]
is strictly positive. Second, it implies that the slicing $P$ is locally finite; in fact that $P(I)$ is a length category for any interval $I\subset \R$ of length $|I|<1$; see \cite[p8]{KS08}. 
Third, it implies that the generalised norm 
\[
U \mapsto \norm{U}_\sigma \coloneqq \sup\left \{ \frac{|U(c)|}{|Z(c)|} : 0\neq c\in P(\phi), \phi\in \R \right\}
\]
defined in \cite{Bridgeland07} is actually a norm on $\Hom{\Lambda}{\C}$ because it is bounded above as $\norm{U}_\sigma \leq K\norm{U}$ where 
$
\norm{U} = \sup \{ |U(\lambda)| : \lambda\in \Lambda\otimes\R,\ \norm{\lambda}=1\}
$
is the operator norm. In fact the support property is equivalent to $\norm{\cdot}_\sigma$ being a norm, see \cite[App.~B]{BM11}. 

The central result in the theory of stability conditions is the following deformation theorem.

\begin{theorem}[{\cite[Thm.~7.1 and Lem.~6.2]{Bridgeland07}}]
\label{thm:deformation theorem}
Let $\sigma = (P,Z)$ be a pre-stability condition, $0 < \epsilon < \tfrac{1}{8}$ and $W \in \Hom{\Lambda}{\C}$.
If $\norm{ W-Z}_\sigma< \sin(\pi\epsilon)$ then there is a unique pre-stability condition $\tau = (Q,W)$ with $d(P,Q)<\epsilon$. Moreover, if $\sigma$ is a stability condition then so is $\tau$. 
\end{theorem}

\noindent
The set of stability conditions $\Stab{\CC}$ is topologised as a subset of $\Slice{\CC} \times \Hom{\Lambda}{\C}$.
\Cref{thm:deformation theorem} shows that the charge map $\cm \colon \Stab{\CC} \to \Hom{\Lambda}{\CC}$, $(P,Z) \mapsto Z$ is a local homeomorphism, and therefore that $\Stab{\CC}$ can be given the structure of a, possibly empty, complex manifold of dimension $\rk(\Lambda)$.

\subsection{Lax pre-stability conditions}
\label{subsec:lax pre-stability conditions}

We start with a modified version of Bridgeland's notion of pre-stability condition without the requirement that the masses of non-zero objects have to be positive. Late we will add a concomitantly modified support property. See \cite[\S3]{CLSY} for almost the same notion in another shape; the only difference is that in \cite{CLSY} the slicing is not required to be locally finite.

\begin{definition}
\label{def:lax pre-stability condition}
A \defn{lax pre-stability condition} on $\CC$ is a pair $(P, Z) \in \Slice{\CC}\times \Hom{\Lambda}{\C}$ such that $0\neq c \in P(\phi)$ implies $Z(c) = m(c) \exp(i\pi \phi)$ for some $m(c) \in \R_{\geq 0}$.
\end{definition}

\noindent
Recall that $\Slice{\CC}$ is defined as the set of \emph{locally finite} slicings, \ie in contrast to the case of a pre-stability condition we assume that the slicing of a lax pre-stability condition is locally finite from the outset. The reason for the difference is that local finiteness will not be a consequence of the weaker support property we define for lax pre-stability conditions below.

As before, we refer to $m(c) = |Z(c)|$ as the \defn{mass} of a semistable object $c\in P(\phi)$. The \defn{mass} of any $c \in \CC$ is again defined as $m(c) \coloneqq m(c_1)+\cdots+m(c_k)$ where $c_1 \in P(\phi_1),\ldots,c_k \in P(\phi_k)$ 
are the semistable factors of $c$ with respect to the slicing $P$.
 
\begin{definition}
An object $c\in \CC$ is called \defn{massive} if $m(c)>0$, and \defn{massless} if $m(c)=0$. Note that $0\in\CC$ has no semistable factors, so that $m(0)=0$, \ie $0$ is always a massless object. 

The \defn{massless subcategory} $\NN_\sigma$ of a lax pre-stability condition $\sigma$ is the full subcategory of the massless objects. Thus $\sigma$ is a pre-stability condition precisely when $\NN_\sigma=0$; sometimes for emphasis we say it is a \defn{strict} pre-stability condition. 
\end{definition}

\begin{proposition}
\label{prop:massless thick}
The massless subcategory of a lax pre-stability condition is a thick subcategory to which the slicing is adapted. 
\end{proposition}

\begin{proof}
Let $\sigma = (P,Z)$ be a lax pre-stability condition and write $\NN = \NN_\sigma$.
It is clear that $\NN$ is closed under shifts. Suppose that $b \in \CC$ sits in a triangle $a \to b \to c \to a[1]$ with $a, c \in \NN$. Taking cohomology with respect to the t-structure with heart $P(0,1]$ we obtain a long exact sequence $\cdots \to H^i(a) \to H^i(b) \to H^i(c) \to \cdots$ in $P(0,1]$. By assumption $m(a)=0=m(c)$, so that $m( H^i(a)) = 0 = m(H^i(c))$ for each $i\in \Z$ since the HN filtration of an object of $\CC$ is a refinement of the decomposition of that object into its cohomology with respect to the heart $P(0,1]$. 
Because $\NN \cap P(0,1]$ is a Serre subcategory, we get $m(H^i(b))=0$ too. Hence $m(b) = \sum_{i\in \Z} m(H^i(b)) = 0$, and $\NN$ is extension-closed, hence a triangulated subcategory.

Since the set of semistable factors of $a\oplus b$ is the union of the sets of semistable factors of $a$ and $b$, we obtain that $\NN$ is thick from the following chain of equivalences:
\[
  a\oplus b \in \NN \iff m(a\oplus b)=0 \iff m(a)+m(b)=0 
  \iff m(a)=0=m(b) \iff a, b \in \NN.
\]
Every semistable factor of a massless object is massless since the mass of an object is the sum of the masses of its semistable factors. 
Moreover, for any interval $I$ of the form $(\phi, \phi+1]$ or $[\phi, \phi+1)$ the full subcategory $P(I)$ is the heart of a t-structure, and hence abelian. The intersection $P(I) \cap \NN$ is a Serre subcategory because $m(c)=0 \iff Z(c)=0$ for $c\in P(I)$. Therefore $P$ is adapted to $\NN$.
\end{proof}

\begin{lemma}
\label{lem:massless simples}
If $\sigma = (P,Z)$ is a lax pre-stability condition then its massless subcategory is the triangulated closure $\NN_\sigma = \triang{\CC}{S}$ of the set $S$ of stable massless objects in the heart $P(0,1]$. 
\end{lemma}

\begin{proof}
Evidently $\triang{\CC}{S} \subseteq \NN_\sigma$. For the other inclusion, let $c\in\NN_\sigma$ be a massless object. Its HN filtration has finitely many semistable factors, and each factor is massless and has a finite composition series in its slice with stable massless objects. Up to shift each of these stable objects has phase in $(0,1]$. Therefore $c\in \triang{\CC}{S}$ and $\NN_\sigma \subseteq \triang{\CC}{S}$.
\end{proof}

The semistable factors of a massless object are, by definition, massless. In fact, this property persists in an open neighbourhood in the following sense.

\begin{lemma}
\label{lem:local persistence of massless factors}
Let $\sigma =(P,Z)$ be a lax pre-stability condition and $Q$ a slicing. If $d(P,Q) < \tfrac{1}{6}$ then $Q$ restricts to a slicing on the massless subcategory $\NN_\sigma$.
\end{lemma}

\begin{proof}
Let $d(P,Q) < \epsilon < \tfrac{1}{6}$ and fix $c \in \NN \coloneqq \NN_\sigma$. We must show that the $Q$-semistable factors of $c$ lie in $\NN$, so let $b\in Q(\phi)$ be one such $Q$-semistable factor of $c$. Then $b$ is also a $Q$-semistable factor of $c' \coloneqq H_P^{(\phi-\epsilon,\phi+\epsilon)}(c)$ because $Q(\phi) \subseteq P(\phi-\epsilon,\phi+\epsilon)$ so that
\[
H_Q^\phi (c') = H_Q^\phi H_P^{(\phi-\epsilon,\phi+\epsilon)}(c) = H_Q^\phi (c) = b.
\]
Moreover, $c'\in \NN$ too, since each of its $P$-semistable factors lies in $\NN$. In particular, $Z(c')=0$. 

Let $b_1,\ldots,b_m$ be the $Q$-semistable factors of $c'$, so that $b=b_i$ for some $1\leq i \leq m$. Since $c'\in P(\phi-\epsilon,\phi+\epsilon) \subseteq Q(\phi-2\epsilon,\phi+2\epsilon)$ we have
\[
b_1, \ldots,b_m \in Q(\phi-2\epsilon,\phi+2\epsilon) \subseteq P(\phi-3\epsilon,\phi+3\epsilon).
\] 
Now let $b_{i,j}$ for $j=1, \ldots, n_i$ be the $P$-semistable factors of $b_i$ for $1\leq i \leq m$. The equation
\[
\sum_{i,j}Z(b_{i,j}) = \sum_i Z(b_i) = Z(c') = 0
\]
implies $Z(b_{i,j})=0$ because of $6 \epsilon < 1$. Hence $b_{i,j}\in\NN$, for each $1\leq i \leq m$ and $1\leq j\leq n_i$, so that all $P$-semistable factors of $b$ lie in $\NN$. In particular, $b\in \NN$ as claimed.
\end{proof}

\begin{remark}
When $\sigma=(P,Z)$ is a strict pre-stability condition the slices $P(\phi)$ are abelian categories \cite[Lem.~5.2]{Bridgeland07}. However, this may fail for lax pre-stability conditions since the argument relies on the charge of each semistable object being non-zero --- see \cref{ex:lax supported versus weak}. Nevertheless, each slice $P(\phi)$ is a quasi-abelian length category, and so each semistable object $c$ has at least one finite composition series with stable factors. Indeed, every composition series of $c$ must have finite length, but the lengths need not be the same, and the multi-sets of stable factors need not be unique up to isomorphism. However, we are not aware of an example of a slice $P(\phi)$ in which there is an object with Jordan--H\"older filtrations of different lengths.
\end{remark}

\subsection{Lax stability conditions: the lax support property}
\label{subsec:lax stability conditions}

Evidently the support property fails when there are massless objects. We remedy this with a new notion generalising to the lax setting. To phrase it concisely, we make the following definition.

\begin{definition}
An object $c\in \CC$ is \defn{$(P,\delta)$-slim} for a slicing $P$ and $\delta > 0$ if $c\in P(\phi-\delta,\phi+\delta)$ for some $\phi\in\R$. We extend this to $\delta=0$ by saying that $c\in \CC$ is $(P,0)$-slim if $c\in P(\phi)$ for some $\phi\in\R$, \ie $c$ is semistable.
\end{definition}

\noindent The slicing will always be part of a lax pre-stability condition $\sigma=(P,Z)$, and we write \emph{$(\sigma,\delta)$-slim} instead, and shorten to \emph{$\delta$-slim} if $\sigma$ is clear from the context.

\begin{definition}[Lax stability condition]
\label{def:lax stability condition}
Let $\sigma=(P,Z)$ be a lax pre-stability condition.
\begin{enumerate}
\item $\sigma$ satisfies \defn{$\delta$-lax support} for some $\delta\geq 0$ if there is $K>0$ such that
\[
|Z(c)| \geq \frac{1}{K} \norm{c}
\]
for all massive, indecomposable $\delta$-slim objects $c\in \CC$. We refer to $K$ as the \defn{support constant} and $\delta$ as the \defn{support width}. 
We also call $\sigma$ a \defn{$\delta$-lax stability condition}.
\item $\sigma$ satisfies \defn{lax support} if it satisfies $\delta$-lax support for some $\delta>0$. 
\item A \defn{lax stability condition} is a lax pre-stability condition with lax support.
\end{enumerate}
\end{definition}

\begin{remarks}
\label{rmk:lax support}
More succinctly, $\sigma$ satisfies $\delta$-lax support if $\exists K>0$ with $|Z(c)| \geq \norm{c}/K$ for all massive, indecomposable $c\in\CC$ with $\phi^+_\sigma(c) - \phi^-_\sigma(c) < 2\delta$. 

Lax support agrees with support for strict pre-stability conditions, as is shown in \cref{lem:lax and strict support} below.
It is clear that massless objects must be excluded from any analogue of the support property for lax pre-stability conditions. 

We consider only \emph{indecomposable} massive objects to avoid support failing simply because there is a massive semistable object $b$ and a non-zero massless semistable object $c$ of the same phase. In that case the sums $b\oplus c^n$ for $n\in \N$ are semistable with fixed mass $m(b\oplus c^n)=m(b)$ but with $\norm{ b \oplus c^n } \to \infty$ as $n\to \infty$. 

A stable object is indecomposable, because any summand is semistable of the same phase.
Moreover, an object of $P(\phi-\delta,\phi+\delta)$ is indecomposable in this subcategory if and only if it is indecomposable in $\CC$ because any summand of a slim object is slim.

We test slim objects, \ie nearby phases, so that lax support becomes an open property on lax pre-stability conditions with the same massless category --- this follows from \cref{cor:propagation}(2) and the observation that if $\sigma$ and $\tau$ have the same massless subcategory then the restriction $\restrict{\NN_\sigma}(\tau)=0$. 
To see that the condition that the lax stability conditions have the same massless subcategory is necessary, see \cref{ex:not well-supported}.
The openness property would not hold if we only tested on semistable objects in the slice; see \cref{subsec:nilrep} for a concrete example.\footnote{\cref{def:lax stability condition} crucially differs from this article's first {\tt arXiv} version where the inequality $|Z(c)| \geq \norm{c} / K$ was only checked on \emph{massive stable} objects $c\in\CC$. The latter notion is not necessarily open on lax pre-stability conditions with the same massless category, as kindly pointed out to us by Arend Bayer. See \cref{subsec:nilrep}.}

If $\sigma$ satisfies $\delta$-lax support then it satisfies $\delta'$-lax support for every $0\leq \delta' \leq \delta$.
\end{remarks}

\begin{remark}
\label{support and local-finiteness}
Unlike support for strict pre-stability conditions, lax support does not imply that the slicing is locally finite. For example $(P,0)$ is a compatible pair of slicing and charge for any slicing $P$, and trivially satisfies lax support because there are no massive objects, regardless of whether $P$ is locally finite. This is why we have to assume our slicings to be locally finite from the outset.

For an explicit example, take $\CC = \Db(\PP^1)$ and $\sigma=(P,Z)$ with charge $Z=0$ and the slicing $P=P_c$ of \cref{ex:slicings}, \ie $P(1)=\coh(\PP^1)$. Then $\sigma$ satisfies the lax support property since every non-zero object is massless. However, it is not locally finite since $P(1-\epsilon,1+\epsilon)=P(1)$ is not length for any $0<\epsilon<1/2$. 
\end{remark}

In addition to the local finiteness issue, there is a further technical question. The slices $P(\phi)$ of a lax stability condition are quasi-abelian but, in contrast to the situation for strict stability conditions, need not be abelian. However, as noted in \cref{subsec:quasi-abelian}, the notion of a `length' quasi-abelian category makes sense. This raises the following natural question.

\begin{question}
Are there examples where the Jordan--H\"older property fails for the slices $P(\phi)$ of a lax stability condition? If there are then, whilst the HN filtrations of objects are unique, their refinements to filtrations with stable factors would not be. Presumably this would have implications for the wall-and-chamber structure.
\end{question}

We conclude this subsection by observing that for strict pre-stability conditions, lax support coincides with the original definition of support.

\begin{lemma}
\label{lem:lax and strict support}
The following properties are equivalent for a strict pre-stability condition $\sigma$ on a Hom-finite triangulated category $\cat{C}$:
\begin{enumerate}[label=(\roman*)]
\item $\sigma$ satisfies support, \ie $\sigma$ is a stability condition;
\item $\sigma$ satisfies $\delta$-lax support for all $0\leq \delta < \tfrac{1}{4}$;
\item $\sigma$ satisfies $\delta$-lax support for some $0\leq \delta$.
\end{enumerate}
\end{lemma}

\begin{proof}
\noindent (i) \timplies (ii): Suppose $\sigma = (P,Z)$ satisfies support. Fix $0\leq \delta < \tfrac{1}{4}$ and let $c\in P(\phi-\delta,\phi+\delta)$ with semistable factors $a_1, \ldots,a_n$. Then there exists $K>0$ such that
\[
|Z(c)| \geq \cos(2\pi\delta) \sum_{i=1}^n |Z(a_i)| \geq \frac{\cos(2\pi\delta)}{K} \sum_{i=1}^n \norm{a_i} \geq \frac{\cos(2\pi\delta)}{K} \norm{c}
\]
by trigonometry, the support property and the triangle inequality. Hence $\sigma$ has $\delta$-lax support.

\noindent (ii) \timplies (iii) is immediate. 

\noindent (iii) $\implies$ (i): Suppose $\sigma$ satisfies $\delta$-lax support for some $\delta\geq 0$. 
Since $\CC$ is Hom-finite, each $c\in P(\phi)$ decomposes as a finite direct sum $c\cong c_1\oplus \cdots \oplus c_n$ of indecomposable objects in $P(\phi)$ by \cref{prop:KRS property}.
These summands are massive, so $\delta$-lax support for $\sigma$ and the triangle inequality yield the following inequality, showing $\sigma$ satisfies support
\[
|Z(c)| = \sum_{i=1}^n |Z(c_i)| \geq \frac{1}{K} \sum_{i=1}^n \norm{c_i} \geq \frac{1}{K}\norm{c}. \qedhere
\]
\end{proof}

\subsection{Semi-norms and support}
\label{subsec:semi-norms}

A lax pre-stability condition $\sigma = (P,Z)$ defines a family of generalised semi-norms on $\Hom{\Lambda}{\C}$: for $\delta\geq0$, let
\[
W \mapsto \norm{W}_{\sigma,\delta} \coloneqq \sup \Big\{ \frac{|W(c)|}{|Z(c)|} : c\in\CC \text{ massive indecomposable $\delta$-slim} \Big\} .
\] 
By convention we set $\sup(\emptyset)=0$ so that $\norm{W}_{\sigma,\delta}=0$ for all $W\in \Hom{\Lambda}{\C}$ when $\sigma$ has no massive objects. The adjective `generalised' refers to the fact that we allow $\norm{W}_{\sigma,\delta}=\infty$ if the supremum does not exist, see \cref{ex:non-supported} below. These are norms when $\sigma$ is strict, but in general are only \emph{semi-}norms because $\norm{W}_{\sigma,\delta}=0$ for a non-zero charge $W$ which vanishes on all \emph{massive} objects. The next result shows that for a strict stability condition all the semi-norms in the family are equivalent to the norm $\norm{\cdot}_\sigma$.

\begin{lemma}
\label{lem:seminorm inequalities}
Let $\sigma$ be a strict pre-stability condition, $W \in \Hom{\Lambda}{\C}$ and $0\leq \delta < \tfrac{1}{4}$. Then
\[
\norm{W}_\sigma \leq \norm{W}_{\sigma,\delta} \leq \norm{W}_\sigma / \cos(2\pi\delta) .
\]
In particular, the semi-norms $\norm{\cdot}_{\sigma,\delta}$ are all equivalent to $\norm{\cdot}_\sigma$.
\end{lemma}

\begin{proof}
First note that $\norm{W}_\sigma = \norm{W}_\sigma^s$ where $\norm{W}_\sigma^s \coloneqq \sup\left \{ {|W(s)|}/{|Z(s)|} : \text{non-zero $\sigma$-stable}\ s\right\}$. This follows since for semistable $c$
\[
 |W(c)| \leq \sum_{s\in S} |W(s)| \leq \norm{W}_\sigma^s \sum_{s\in S} |Z(s)| = \norm{W}_\sigma^s \cdot |Z(c)|
\]
where $S$ is the multi-set of stable factors. Therefore, on the one hand $\norm{W}_\sigma \leq \norm{W}_{\sigma,\delta}$ since each stable object is massive, indecomposable and $\delta$-slim for any $\delta\geq 0$. On the other hand, each $\delta$-slim $c\in P(\phi-\delta,\phi+\delta)$ has a finite set $S$ of indecomposable semistable factors and
\[
|W(c)| \leq \sum_{s\in S}|W(s)| \leq \norm{W}_\sigma \sum_{s\in S} |Z(s)| \leq \frac{\norm{W}_\sigma}{\cos(2\pi\delta)} |Z(c)|
\]
by the triangle inequality and $\delta$-lax support. Hence $\norm{W}_{\sigma,\delta} \leq \norm{W}_\sigma / \cos(2\pi \delta)$. 
\end{proof}

\begin{definition}
A lax pre-stability condition $\sigma$ is \defn{$\delta$-full} if the semi-norm $\norm{\cdot}_{\sigma,\delta}$ is bounded on the unit ball in $\Hom{\Lambda}{\C}$, \ie if there exists $K>0$ such that for all $W\in \Hom{\Lambda}{\C}$
\[
\norm{W}_{\sigma,\delta} \leq K\norm{W} .
\]
\end{definition}

The above bound is the operator norm $\norm{W} = \sup \{ |W(\lambda)| : \lambda\in \Lambda\otimes \R,\ \norm{\lambda}=1\}$. This notion is independent of the norm on $\Lambda\otimes \R$, and reduces to the usual notion \cite{Bridgeland08} when $\sigma$ is strict.

The next result is a simple extension of \cite[Prop.~B.4]{BM11} and \cite[Lem.~11.4]{BMS16}, following \cite[\S 2.1]{KS08}, to the case of lax stability conditions. We will not use the characterisation of lax stability using quadratic forms on $\Lambda\otimes\R$ but the fullness criterion will be handy below.

\begin{proposition}
\label{prop:lax support and full}
The following properties are equivalent for a lax pre-stability condition $\sigma$ and any $\delta \geq 0$:
\begin{enumerate}[label=(\roman*)]
\item $\sigma$ satisfies $\delta$-lax support;
\item $\sigma$ is $\delta$-full, \ie $\norm{-}_{\sigma,\delta}$ is a semi-norm on $\Hom{\Lambda}{\C}$;
\item there exists a quadratic form $\Delta$ on $\Lambda\otimes \R$ such that 
\begin{enumerate}
\item $\Delta(c)\geq 0$ for each massive indecomposable $\delta$-slim $c\in \CC$;
\item $\Delta$ is negative definite on $\ker \cm(\sigma) \subset \Lambda\otimes \R$.
\end{enumerate}
\end{enumerate}
\end{proposition}

\begin{proof}
(i) \timplies (ii), (iii). Assume $\sigma = (P,Z)$ satisfies $\delta$-lax support. Then for any $W\in \Hom{\Lambda}{\C}$
\begin{align*}
\norm{W}_{\sigma,\delta} 
& =      \sup \{ |W(c)| / |Z(c)| : \text{massive indecomposable $\delta$-slim } c\in \CC \} \\
& \leq K \sup \{ |W(c)| / \norm{c} :\text{massive indecomposable $\delta$-slim } c\in \CC \} \\
& \leq K \sup \{ |W(\lambda)| / \norm{\lambda} : 0\neq \lambda\in \Lambda \} = K\norm{W}
\end{align*}
so that $\sigma$ is $\delta$-full. The quadratic form $\Delta(\lambda) \coloneqq K^2|Z(\lambda)|^2 - \norm{\lambda}^2$ on $\Lambda\otimes \R$ satisfies (iii).

(ii) \timplies (i). Assume $\sigma$ is $\delta$-full and, for a contradiction, that $\sigma$ does not satisfy $\delta$-lax support. Then there is a sequence $(c_n)$ of massive indecomposable $\delta$-slim objects with
$m(c_n) = |Z(c_n)| < {\norm{c_n}}/{n}$.
Choosing $W_n\in \Hom{\Lambda}{\C}$ with $\norm{W_n}=1$ and $|W_n(c_n)|=\norm{c_n}$, we get
%
%
\[
\norm{W_n}_{\sigma,\delta} \geq \frac{|W_n(c_n)|}{|Z(c_n)|} > n\frac{|W_n(c_n)|}{\norm{c_n}} = n
\]
so that $\norm{\cdot}_{\sigma,\delta}$ is not bounded on the (compact) unit ball, contradicting that $\sigma$ is $\delta$-full.

(iii) \timplies (i). Finally, suppose that $\Delta$ is a quadratic form with $\Delta(c) \geq 0$ for every massive indecomposable $\delta$-slim object $c\in\CC$ and whose restriction to $\ker Z$ is negative definite. In particular if $\Delta(\lambda) >0$ then $\lambda \not \in \ker Z$ so $|Z(\lambda)|^2>0$. Therefore, because the unit ball is compact, there exists $K>0$ such that
  $\lambda \mapsto K^2|Z(\lambda)|^2 - \Delta(\lambda)$
is a positive definite form on $\Lambda\otimes\R$. If $\norm{\cdot}$ is the induced norm then
$
K^2|Z(c)|^2= \norm{c}^2 +\Delta(c) \geq \norm{c}^2
$
for each massive indecomposable $\delta$-slim object $c\in \CC$. Therefore $\sigma$ satisfies $\delta$-lax support.
\end{proof}

\begin{remark}
\label{rmk:local finiteness not needed}
\Cref{prop:lax support and full} is stated for a lax pre-stability condition $\sigma=(P,Z)$, but in fact holds slightly more generally as the assumption that the slicing $P$ is locally finite is not required. In particular, if $\sigma$ is a strict pre-stability condition then the three properties in the statement are equivalent.
\end{remark}

\begin{example}
\label{ex:not well-supported}
Let $\CC = \Db(\PP^1)$. Its Grothendieck group is $\Lambda = K(\CC) = K(\PP^1) \cong \Z^2$ using the basis $[\mathcal{O}]$ (structure sheaf) and $[\mathcal{O}_x]$ (skyscraper sheaves). The inner product is chosen so that this basis is orthonormal. Consider the lax pre-stability condition $(P_t,0)$ defined by zero charge and the slicing $P_t$ from \cref{ex:slicings}, \ie $P_t(1) = \Clext{\cO_x : x\in\PP^1}$ and $P_t(\tfrac{1}{2}) = \Clext{\cO(n) : n\in\Z}$.
With all objects massless, $(P_t,0)$ trivially satisfies the lax support property and is a lax stability condition.

Let $\sigma=(P_t,W)$ have the same slicing but $W(\mathcal{O}_x)=0$, $W(\mathcal{O})=i$.
This is a lax pre-stability condition because of $\mathcal{O}\in P_t(\tfrac{1}{2})$.
The massless subcategory of $\sigma$ is $\thick{}{\mathcal{O}_x : x\in \PP^1}$ and, for any $\delta\geq 0$, the massive indecomposable $\delta$-slim objects are $\mathcal{O}(n)[l]$ for $n,l\in \Z$. Therefore
\[
\norm{U}_{\sigma,\delta} 
= \sup\big\{ \tfrac{|U(b)|}{|W(b)|} : \text{massive indecomposable $\delta$-slim $b$} \big\} 
= \sup\{ | U (\mathcal{O}(n)) | : n\in \Z \} ,
\]
which is infinite for example when $U(\mathcal{O}_x) = 1$ and $U(\mathcal{O})=0$. Thus $\sigma$ does not satisfy lax support for any $\delta\geq 0$ and is not a lax stability condition.

By scaling the charge function $W$ we can similarly produce a lax pre-stability condition which is arbitrarily close to $(P_t,0)$ in $\Slice{\CC}\times \Hom{\Lambda}{\C}$, but does not satisfy $\delta$-lax support. This shows that $\delta$-lax support is not an open property if unless we restrict to lax pre-stability conditions with a fixed massless subcategory.

\end{example}

\subsection{The massive part of a lax stability condition}

A lax stability condition $\sigma$ on $\CC$ with massless subcategory $\NN$ induces a stability condition $\massive{\NN}(\sigma)$ on the quotient $\CC/\NN$. We refer to this as the `massive part' of the lax stability condition $\sigma$. 

\begin{proposition}
\label{prop:massive stability condition}
Let $\sigma = (P,Z)$ be a $\delta$-lax stability condition for some $\delta<\frac{1}{4}$ on $\CC$ with massless subcategory $\NN$, and let $\massive{\NN}(\sigma) \coloneqq (P_{\CC/\NN}, Z)$. Then
\begin{enumerate}
\item $\massive{\NN}(\sigma)$ is a strict pre-stability condition on $\CC/\NN$. 
\end{enumerate}
If, in addition, $\CC$ is Hom-finite, then 
\begin{enumerate}
\setcounter{enumi}{1}
\item $\norm{W}_{\massive{\NN}(\sigma)} \leq \norm{W}_{\massive{\NN}(\sigma),\delta} \leq \norm{W}_{\sigma,\delta}$ holds for all $W\in \Hom{\Lambda/\Lambda_\NN}{\C}$.
\item $P$ is well adapted to $\NN$, and $\massive{\NN}(\sigma)$ is a stability condition on $\CC/\NN$.
\end{enumerate}

\end{proposition}

\begin{proof} 
\begin{enumerate}[wide, labelwidth=!, labelindent=1em]
\item By \cref{prop:massless thick,,prop:quotient slicing}, $P$ is adapted to $\NN$ and so is compatible with the pair of slicings $(P_\NN,P_{\CC/\NN})$ on the massless subcategory $\NN$ and the quotient $\CC/\NN$. The charge $Z$ lies in the subspace $\Hom{\Lambda/\Lambda_\NN}{\C}$ and $Z(c) = m(c) \exp(i\pi\phi)$ with $m(c)>0$ for $0 \neq c\in P_{\CC/\NN}(\phi)$. Thus $\massive{\NN}(\sigma)$ is a strict pre-stability condition on $\CC/\NN$. 
\item
 By \cref{lem:seminorm inequalities}, $\norm{W}_{\massive{\NN}(\sigma)} \leq \norm{W}_{\massive{\NN}(\sigma),\delta}$. Now suppose $c\in P_{\CC/\NN}(\phi-\delta,\phi+\delta)$ is indecomposable in $\CC/\NN$. Replacing it by an isomorphic object in $\CC/\NN$, we may suppose $c\in P(\phi-\delta,\phi+\delta)$. Since $\CC$ is Hom-finite and $\delta<\frac{1}{4}$ we can decompose $c$ as $c'\oplus c''$ in $P(\phi-\delta,\phi+\delta)$ where $c'$ is a massive indecomposable object and $c''$ is massless; we allow the possibility $c''=0$. Thus $\norm{W}_{\massive{\NN}(\sigma),\delta} \leq \norm{W}_{\sigma,\delta}$ for $W\in \Hom{\Lambda/\Lambda_\NN}{\C}$ because $|W(c')| / |Z(c')| = |W(c)| / |Z(c)|$.
\item It remains to show that $\massive{\NN}(\sigma)$ has the support property, for then $P_{\CC/\NN}$ is locally finite so that $P$ is well-adapted to $\NN$. 
Let $c\in P_{\CC/\NN}(\phi)$. By replacing with an isomorphic object in $\CC/\NN$, we may assume without loss of generality that $c\in P(\phi)$. Using the assumption that $\CC$ is Hom-finite and applying \cref{prop:KRS property}, $c$ decomposes as a finite direct sum of indecomposable objects $\bigoplus_{i=1}^n c_i$. Since $c\in P(\phi)$ it follows that each $c_i\in P(\phi)$ and by replacing $c$ with an isomorphic object in $\CC/\NN$, we may assume without loss of generality that $c_i$ is massive for each $i \in \{1, \dots ,n\}$. Finally, using the fact that $\sigma$ satisfies $\delta$-lax support, we see that
\[
|Z(c)| = \sum_{i=1}^n |Z(c_i)| \geq \frac{1}{K} \sum_{i=1}^n \norm{c_i} \geq \frac{1}{K}\norm{c} \geq \frac{1}{K}\norm{c}_{{\CC/\NN}},
\]
where the final inequality follows from the definition of $\norm{\cdot}_{{\CC/\NN}}$ as the restriction of $\norm{\cdot}$ to the orthogonal complement to $(\Lambda_\NN)_\R \hookrightarrow \Lambda_\R$ with respect to the inner product defining the norm.
\qedhere
\end{enumerate}
\end{proof}

\cref{prop:massive stability condition} gives a positive answer to \cref{q:locally finite} when $P$ is the slicing of a lax stability condition with massless subcategory $\NN$.
It would be interesting to know if \cref{q:locally finite} has a positive answer when $P$ is the slicing of a lax pre-stability condition.

\begin{remark}
\label{rmk:lax quotient}
The above argument applies to a thick subcategory $\MM \subseteq \NN$ of the massless subcategory of $\sigma = (P,Z)$ to which the slicing is well adapted: then $\massive{\MM}(\sigma) \coloneqq (P_{\CC/\MM}, Z)$ is a lax stability condition on $\CC/\MM$ with $\norm{W}_{\massive{\MM}(\sigma),\delta} \leq \norm{W}_{\sigma,\delta}$ for $W\in \Hom{\Lambda/\Lambda_\MM}{\C}$. We need to assume $P$ is well adapted to $\MM$ because lax support for $\massive{\MM}(\sigma)$ does not imply that its slicing is locally finite when there are massless objects.
\end{remark}

The support property satisfied by a lax stability condition is stronger than that for the induced stability condition on the quotient. 

\begin{example}
\label{ex:lax supported versus weak}
Let $\sigma = (P_t,W)$ be the lax pre-stability condition on $\CC = \Db(\PP^1)$ with massless subcategory $\NN=\thick{}{\mathcal{O}_x : x\in \PP^1}$ defined in \cref{ex:not well-supported}. Recall that this is \emph{not} a lax stability condition as lax support fails. The quotient $\CC/\NN \cong \thick{\CC/\NN}{\mathcal{O}}$ is generated by a single object with charge $i$ and phase $\tfrac{1}{2}$ in the stability condition on the quotient $\massive{\NN}(\sigma)$. It follows that $\massive{\NN}(\sigma)$ satisfies the support property so that $\tau$ is a lax pre-stability condition which satisfies support on the quotient $\CC/\NN$. 
\end{example}

Recall that strict pre-stability conditions $\sigma=(P,Z)$ and $\tau=(Q,Z)$ with the same charge and with $d(P,Q)<1$ are equal \cite[Lem.~6.4]{Bridgeland07}. There is an analogue for lax pre-stability conditions; the only difference is that the slicing on the massless objects is not determined by the charge and so we must fix this too.

\begin{lemma}
\label{cor:lax uniqueness}
Suppose $\sigma=(P,Z)$ and $\tau=(Q,Z)$ are lax pre-stability conditions with the same massless subcategory $\NN$ and massless slicing $P_\NN=Q_\NN$. Then $\sigma=\tau$ when $d(P,Q)<1$.
\end{lemma}

\begin{proof}
The induced pre-stability conditions $\massive{\NN}(\sigma)$ and $\massive{\NN}(\tau)$ have the same charge and the distance between their slicings is $d(P_{\CC/\NN},Q_{\CC/\NN}) \leq d(P,Q) <1$. Hence $P_{\CC/\NN}=Q_{\CC/\NN}$ by \cite[Lem.~6.4]{Bridgeland07}. As $P_\NN=Q_\NN$ and the glued slicing is unique by \cref{prop:uniqueness of compatibility}, we get $P=Q$.
\end{proof}

\section{The space of lax stability conditions}
\label{sec:spaces of stability conditions}

\noindent
Let $\CC$ be a Hom-finite $\kk$-linear triangulated category and $\Lambda$ a finite rank lattice with a norm $\norm{\cdot}$ on $\Lambda \otimes \R$ and a surjective homomorphism $v\colon K(\CC) \to \Lambda$. Let $\Stab{\CC}$ be the set of stability conditions whose charges factor through $v$, equipped with the subspace topology from the inclusion $\Stab{\CC} \subset \Slice{\CC}\times \Hom{\Lambda}{\C}$ as in \cref{subsec:charges}; see \cite[\S 6]{Bridgeland07}. Recall that we have defined $\Slice{\CC}$ to be the set of locally finite slicings of $\CC$.

\begin{definition}
\label{def:space of lax stability conditions}
The \defn{space of lax stability conditions} is the subset
\[ \LaxStab{\CC} \subset \Slice{\CC}\times \Hom{\Lambda}{\C} \]
of lax stability conditions, equipped with the subspace topology. For a thick subcategory $\NN \subseteq \CC$, let $\LaxStabN{\CC}{\NN} \subset \LaxStab{\CC}$ denote the subspace where the massless subcategory is $\NN$. For each of these spaces, the \defn{charge map} $\cm$ is the second projection onto $\Hom{\Lambda}{\C}$.
\end{definition}

\subsection{Semi-norm neighbourhoods}
\label{subsec:semi-norm neighbourhoods}
The stability space $\Stab{\CC}$ has a basis of open neighbourhoods of the form
\[
\{ (Q,W) : d(P,Q)<\epsilon \ \text{and}\ \norm{W-Z}_\sigma <\sin(\pi \epsilon)\}
\]
for $(P,Z) \in \Stab{\CC}$ and $\epsilon >0$, see \cite[\S 6]{Bridgeland07}. We define analogous neighbourhoods of $\Slice{\CC}\times \Hom{\Lambda}{\C}$ which will be similarly useful for studying the topology of $\LaxStab{\CC}$.

For $\delta,\epsilon>0$ and a lax pre-stability condition $\sigma =(P,Z)$ define a subset
\[
B_\epsilon^\delta(\sigma) \coloneqq \{ (Q,W) : d(P,Q)<\epsilon \ \text{and}\ \norm{W-Z}_{\sigma,\delta} < \sin(\pi \epsilon) \} \subset \Slice{\CC}\times \Hom{\Lambda}{\C}
\]
using the generalised semi-norms of \cref{subsec:semi-norms}.
Even if $\sigma$ is has lax support, we do not know if $B_\epsilon^\delta(\sigma)$ is a subset of $\LaxStab{\CC}$ for small enough $\epsilon$ and $\delta$. This is why we will always consider $B_\epsilon^\delta(\sigma)$ as subsets of $\Slice{\CC}\times \Hom{\Lambda}{\C}$. Note that $B_\epsilon^\delta(\sigma) \supseteq B_\epsilon^{\delta'}(\sigma)$ for all $\delta \leq \delta'$.

The inequality $\norm{W-Z}_{\sigma,\delta} < \sin(\pi \epsilon)$ in the definition of $B_\epsilon^\delta(\sigma)$ implies that the charge $W(c)$ of massive indecomposable $\delta$-slim object $c$ lies in the bounded region of $\C$ described in the following lemma. This will be used in \cref{lem:thick subcats in nbhd}.

\begin{lemma} 
\label{lem:unpacking semi-norm}
Let $\sigma = (P,Z)$ be a lax pre-stability condition, $W \in \Hom{\Lambda}{\C}$ and $\delta, \epsilon > 0$, $\phi \in \R$. If $\norm{W-Z}_{\sigma,\delta} < \sin(\pi \epsilon)$ then for any $c\in P(\phi-\delta, \phi+\delta)$ massive and indecomposable
\begin{enumerate}
\item $(1-\sin(\pi\epsilon) )|Z(c)| < |W(c)| < (1+\sin(\pi\epsilon) )|Z(c)|$ and
\item $\frac{1}{\pi}\arg W(c) \in (\phi-\delta-\epsilon,\phi+\delta+\epsilon)$.
\end{enumerate}
\end{lemma}

\begin{lemma}
\label{lem:open seminorm neighbourhoods}
If $\sigma$ is a lax stability condition with $\delta$-lax support then $B_\epsilon^\delta(\sigma)$ is an open subset of $\Slice{\CC}\times \Hom{\Lambda}{\C}$ for any $\epsilon > 0$.
\end{lemma}

\begin{proof}
The lax stability condition $\sigma$ is $\delta$-full by \cref{prop:lax support and full}. Hence $\norm{\cdot}_{\sigma,\delta}$ is a semi-norm on $\Hom{\Lambda}{\C}$. Therefore both provisions in the definition of $B_\epsilon^\delta(\sigma)$ are open.
\end{proof}

So for $\delta$-lax stability conditions $\sigma$, the open subsets $B_\epsilon^\delta(\sigma)$ contains all sufficiently small metric balls about $\sigma$, but need not be contained within any such metric ball because $\norm{\cdot}_{\sigma,\delta}$ is only a \emph{semi}-norm. If $\sigma$ does not satisfy $\delta$-lax support then $B_\epsilon^\delta(\sigma)$ need not contain any metric ball about $\sigma$. The provision $\norm{W-Z}_{\sigma,\delta} < \sin(\pi \epsilon)$ is asymmetric in $W$ and $Z$ because $Z$ is the charge of $\sigma$. This asymmetry is illustrated in the example below. 

\begin{example}
\label{ex:seminorm balls asymmetric}
Let $\sigma_g = (P_g, Z_g)$ the strict geometric stability condition on $\CC = \Db(\PP^1)$ with charge $Z_g = -\deg + i \cdot \rank$, and slicing $P_g$ from \cref{ex:slicings}, \ie $P_g(1) = \langle \cO_x : x\in\PP^1 \rangle$ and $P_g(\phi) = \langle \cO(n) \rangle$ for $\phi = \frac{1}{\pi}\arg(-n+i) \in (0,1)$.
And let $\sigma_d = (P_d,Z_d) \coloneqq (P_g,0)$ be the lax stability condition with the same slicing but zero charge.
Then $d(P_g,P_d)=0$ and $\norm{ Z_g-Z_d }_{\sigma_g,\delta} = \norm{Z_g}_{\sigma_g,\delta} = 1$ and $\norm{ Z_d-Z_g }_{\sigma_d,\delta} = \norm{Z_g}_{\sigma_d,\delta} = 0$ because there are no massive $\sigma_d$-stable objects. Thus $\sigma_g \in B_\epsilon^\delta(\sigma_d)$ but $\sigma_d \not \in B_\epsilon^\delta(\sigma_g)$ for any $\delta, \epsilon>0$.
\end{example}

\subsection{Continuity of masses and phases}

The mass and extremal phases of any object vary continuously in $\LaxStab{\CC}$, as does the massless subcategory when we equip the set of thick subcategories, $\Thick{\CC}$, of $\CC$ with an Alexandrov topology. All these maps are locally constant if the charge is fixed.

\begin{proposition}
\label{prop:mass and phase continuity}
For each $0\neq c\in \CC$, the functions $\LaxStab{\CC} \to \R$ given by $\sigma \mapsto m_\sigma(c)$ and $\sigma \mapsto \phi^+_\sigma(c)$ and $\sigma \mapsto \phi^-_\sigma(c)$ are continuous.
\end{proposition}

\begin{proof}
Fix $c \in \CC$. The two functions $\Slice{\CC} \to \R$, $P \mapsto \phi^\pm_P(c)$ are continuous by definition of the metric on the space of slicings; hence $\phi^\pm_\sigma(c)$ are continuous as well.
To show that the mass is continuous consider $\sigma=(P,Z)\in \LaxStab{\CC}$. For sufficiently small $\epsilon >0$ and $\tau =(Q,W)$ with $d(P,Q)<\epsilon$ the HN filtration of $c$ with respect to $\tau$ is the concatenation of the filtrations of the $\sigma$-semistable factors $\{c_i\}$ of $c$. Hence 
$
m_\tau(c) - m_\sigma(c) = \sum_i ( m_\tau(c_i) - m_\sigma(c_i) ).
$
Therefore it suffices to consider the case in which $c$ is $\sigma$-semistable. Assume $c\in P(\phi)$ and let $S\subset Q(\phi-\epsilon,\phi+\epsilon)$ be a multi-set of $\tau$-stable factors of $c$. By the triangle inequality and elementary trigonometry
\[
|W(c)| \leq \sum_{s\in S} |W(s)| \leq \frac{|W(c)|}{\cos(2\pi \epsilon)}
\]
and therefore 
\[
| m_\tau(c) - m_\sigma(c) | = \left| \sum_{s\in S}|W(s)| - |Z(c)| \right| 
\leq \max\left\{ |Z(c)|-|W(c)| , \frac{|W(c)|}{\cos(2\pi \epsilon)} -|Z(c)|\right\}.
\]
Applying the triangle inequality to each term on the right-hand side and the operator norm bounds $|W(c)-Z(c)| \leq \norm{W-Z} \cdot \norm{c}$ and $|Z(c)| \leq \norm{Z} \cdot \norm{c}$ we obtain
\[
| m_\tau(c) - m_\sigma(c)| \leq \max \left\{ \norm{W-Z} , \frac{\norm{W-Z} + (1-\cos(2\pi\epsilon))\norm{Z}}{\cos(2\pi\epsilon)}\right\} \norm{c}.
\]
Requiring $\norm{W-Z}<\epsilon$, in addition to $d(P,Q)<\epsilon$, we see the bound can be made arbitrarily small by reducing $\epsilon$. The result follows.
(This proof doesn't require lax support.)
\end{proof}

\begin{proposition}
\label{prop:local constancy 1}
For each $c\in \CC$, the function $\LaxStab{\CC}$, $\sigma \mapsto m_\sigma(c)$ is locally constant on the fibres of the charge map $\cm \colon\LaxStab{\CC} \to \Hom{\Lambda}{\C}$. Moreover, the set of phases of the massive semistable factors of $c$ is also locally constant on the fibres of $\cm$.
\end{proposition}

\begin{proof}
Fix $c\in \CC$ and $\sigma \in \LaxStab{\CC}$. Then for $\tau$ sufficiently close to $\sigma$ the HN filtration of $c$ with respect to $\tau$ is the concatenation of the filtrations of the $\sigma$-semistable factors of $c$. Suppose $c_i$ is one of these $\sigma$-semistable factors. The charges of the $\tau$-semistable factors of $c_i$ lie in a cone of angle $2\pi\epsilon$ in $\C$, centred on the phase $\phi_i$ of $c_i$. 
Assume that $\cm(\tau) = \cm(\sigma)$. Suppose that two or more $\tau$-semistable factors of $c_i$ are massive. One of these factors must have phase which is strictly greater than $\phi_i$. Taking the $\tau$-semistable factor of $c_i$ of maximal phase and then considering its $\sigma$-semistable factor of maximal phase, we find an object that would destabilise $c_i$ with respect to $\sigma$. Therefore, all but one of the $\tau$-semistable factors of $c_i$ must be massless and the unique massive factor must have the same charge, and in particular the same mass, as $c_i$. It follows that $m_\tau(c) = m_\sigma(c)$, and also that the sets of phases of the massive factors of $c$ with respect to $\sigma$ and $\tau$ are the same.
\end{proof}

\begin{corollary}
\label{cor:local constancy 2}
$\LaxStab{\CC} \to \Thick{\CC}$, $\sigma \mapsto \NN_\sigma$ and $\massive{\NN} \colon \LaxStabN{\CC}{\NN} \to \Stab{\CC/\NN}$ are locally constant on the fibres of the charge map $\cm \colon \LaxStab{\CC} \to \Hom{\Lambda}{\C}$.
\end{corollary}

\begin{proof}
The massless subcategory $\NN_\sigma$ and the semistable objects of the stability condition on the quotient $\massive{\NN_\sigma}(\sigma)$ are locally constant on the fibres of the charge map by \cref{prop:local constancy 1}. By construction the charge of $\massive{\NN_\sigma}(\sigma)$ is constant. 
\end{proof}

\begin{lemma}
\label{lem:thick subcats in nbhd}
Let $\sigma$ be a lax pre-stability condition and $\tau \in B_\epsilon^\delta(\sigma)$ for some $0<\epsilon<\tfrac{1}{6}$ and $\delta<0$. Then $\NN_\tau \subseteq \NN_\sigma$ with equality if and only if $\cm(\tau) \in \Hom{\Lambda/\Lambda_{\NN_\sigma}}{\C}$.
\end{lemma}

Before proving this we state a corollary. We equip $\Thick{\CC}$ with the Alexandrov topology from the opposite of the inclusion partial order, so that $\{ \MM \in \Thick{\CC} \colon \MM \subseteq \NN \}$ is open for any $\NN\in \Thick{\CC}$.
Continuity of massless subcategories follows immediately from the previous lemma, because the subsets $B_\epsilon^\delta(\sigma)$ are open for $\sigma \in \LaxStab{\CC}$.

\begin{corollary}
\label{cor:thick subcats in nbhd}
The map $\LaxStab{\CC} \to \Thick{\CC}, \tau \mapsto \NN_\tau$ is continuous.
\end{corollary}

\begin{proof}[Proof of \cref{lem:thick subcats in nbhd}]
Let $\sigma=(P,Z)$ and $\tau=(Q,W)$. For $\sigma$-stable $c\in \CC$, we have the inequality
$(1-\sin(\pi\epsilon))|Z(c)| \leq |W(c)|$.
This is evident if $c\in \NN_\sigma$, and follows from $\tau \in B_\epsilon^\delta(\sigma)$, \cref{lem:unpacking semi-norm} and the fact that stable objects are $\delta$-slim and indecomposable if $c\not \in \NN_\sigma$.

Now suppose $b \in Q(\phi)$ is $\tau$-semistable. Let $S$ be a (finite) multi-set of $\sigma$-stable factors of $b$. Since $\tau\in B_\epsilon^\delta(\sigma)$ we know that $S\subseteq P(\phi-\epsilon,\phi+\epsilon) \subseteq Q(\phi-2\epsilon,\phi+2\epsilon)$. Hence, using elementary trigonometry and the above inequality we have
\begin{align*}
|W(b)| \geq \cos(2\pi\epsilon) \sum_{s\in S} |W(s)| \geq (1-\sin(\pi\epsilon)) \cos(2\pi\epsilon)\sum_{s\in S} |Z(s)|.
\end{align*}
Therefore $m_\tau(b) \geq (1-\sin(\pi\epsilon)) \cos(2\pi\epsilon) m_\sigma(b)$. In particular, if $b\in \NN_\tau$ then $b\in \NN_\sigma$. 

For the equality statement, use $W\in \Hom{\Lambda/\Lambda_{\NN_\sigma}}{\C}$ if and only if $W(c)=0$ for all $c\in \NN_\sigma$. In particular, if $\NN_\tau = \NN_\sigma$ then $W \in \Hom{\Lambda/\Lambda_{\NN_\sigma}}{\C}$. 
Conversely, if $W\in \Hom{\Lambda/\Lambda_{\NN_\sigma}}{\C}$ and $c\in \NN_\sigma$, let $S$ be the multi-set of $\tau$-semistable factors of $c$. We have
$m_\tau(c) = \sum_{s\in S} |W(s)| = 0$
because $S\subset \NN_\sigma$ by \cref{lem:local persistence of massless factors}. 
Hence $\NN_\sigma \subseteq \NN_\tau$ and we get equality.
\end{proof}

\subsection{The massless and massive parts of a lax stability condition}
\label{subsec:massive and massless parts}

Consider a lax stability condition with prescribed massless subcategory $\sigma=(P,Z) \in \LaxStabN{\CC}{\NN}$. We have defined its massive part $\massive{\NN}(\sigma)$ in \cref{prop:massive stability condition} which is a strict stability condition on $\CC/\NN$. As a counterpart, we now define
the \defn{restriction}, or \defn{massless part}, $\restrict{\NN}(\sigma) \coloneqq (P_\NN, 0)$. As all objects are massless, lax support is trivially satisfied. The massless and massive parts define maps:
\[ 
\begin{array}{r @{\ } l @{\qquad} l}
 \restrict{\NN} \colon& \LaxStabN{\CC}{\NN} \to \LaxStabN{\NN}{\NN},  & \sigma = (P,Z) \mapsto \restrict{\NN}(\sigma) = (P_\NN,0), \qquad \text{and} \\
 \massive{\NN}  \colon& \LaxStabN{\CC}{\NN} \to \Stab{\CC/\NN},       & \sigma = (P,Z) \mapsto \massive{\NN}(\sigma) = (P_{\CC/\NN},Z). \\
\end{array}
\]
When $\NN=0$ the map $\massive{\NN}$ is the identity and $\LaxStabN{\CC}{0} = \Stab{\CC}$ is the usual space of stability conditions. At the other extreme when $\NN=\CC$ it is the map to a point; in this case $\LaxStabN{\CC}{\CC} \cong \Slice{\CC}$ is homeomorphic to the space of locally finite slicings because when every object is massless the charge is zero and the lax support property is vacuous. 

The construction of $\restrict{\NN}(\sigma)$ extends to an open neighbourhood of $\sigma$ in $\LaxStabN{\CC}{\NN}$.

\begin{lemma}
\label{lem:restriction map}
Given $\sigma=(P,Z)\in\LaxStabN{\CC}{\NN}$ and $0 < \epsilon < \tfrac{1}{6}$, the map
\begin{align*}
\restrict{\NN} \colon \{ \tau=(Q,W) \in \LaxStab{\CC} : d(P,Q)<\epsilon \} &\to \LaxStab{\NN} \\
\tau = (Q,W) &\mapsto \restrict{\NN}(\tau) = (Q\cap\NN, W_\NN)
\end{align*}
is well defined and restricts to a map $\restrict{\NN} \colon B_\epsilon^\delta(\sigma) \cap \LaxStab{\CC} \to \LaxStab{\NN}$ for any $\delta>0$.
\end{lemma}

\begin{proof}
The restriction, $\restrict{\NN}(\tau)$, is a lax pre-stability condition by \cref{lem:local persistence of massless factors}. It satisfies $\delta$-lax support for the same $\delta>0$ and support constant $K>0$ as does $\tau$ because we reduce to checking the definition for massive indecomposable $\delta$-slim objects in $\NN$.
\end{proof}

Massive quotients and massless restrictions are contractions and therefore continuous:

\begin{lemma}
\label{lem:contractions}
Let $\sigma$ and $\tau$ be lax pre-stability conditions on $\CC$ such that their slicings are adapted to a thick subcategory $\NN$ of $\CC$. Then
\[ 
  d( \massive{\NN}(\sigma) , \massive{\NN}(\tau) ) \leq d(\sigma,\tau) 
\qquad \text{and} \qquad
  d( \restrict{\NN}(\sigma), \restrict{\NN}(\tau) ) \leq d(\sigma,\tau). 
\] 
\end{lemma}

\begin{proof}
Writing $\sigma = (P,Z)$ and $\tau = (Q,W)$, \cref{cor:slicing inequalities} gives the corresponding inequalities for the slicings: $d(P_\NN,Q_\NN) \leq d(P,Q)$ and $d(P_{\CC/\NN},Q_{\CC/\NN}) \leq d(P,Q)$.

Write $U_\NN \coloneqq U|_{\Lambda_\NN} \in \Hom{\Lambda_\NN}{\C}$ for the restriction of any $U \in \Hom{\Lambda}{\C}$. We consider $U_\NN \in \Hom{\Lambda}{\C}$, as always using the embedding $\Hom{\Lambda_\NN}{\C} \inj \Hom{\Lambda}{\C}$ arising from the inner product on $\Lambda\otimes \R$, see \cref{rmk:charge space splitting}. Write, in this proof only, $U'_\NN \coloneqq U-U_\NN$ for the component in the orthogonal complement $\Hom{\Lambda/\Lambda_\NN}{\C}$ to $\Hom{\Lambda_\NN}{\C}$. Because $U = U_\NN + U'_\NN$ is an orthogonal decomposition: $\norm{U}^2 = \norm{U_\NN}^2 + \norm{U'_\NN}^2$. Applied to $U \coloneqq Z-W$, this shows that $\norm{Z_\NN-W_\NN} \leq \norm{Z-W}$ and $\norm{Z'_\NN-W'_\NN} \leq \norm{Z-W}$. The claimed inequalities follow because $\restrict{\NN}(\sigma) = (P_\NN,Z_\NN)$ and $\massive{\NN}(\sigma) = (P_{\CC/\NN}, Z'_\NN)$, and likewise for $\tau$.
\end{proof}

The construction of the massive part extends to the closure of $\LaxStabN{\CC}{\NN}$.

\begin{proposition}
\label{prop:massive part}
The map $\massive{\NN}\colon \LaxStabN{\CC}{\NN} \to \Stab{\CC/\NN}$ extends to a continuous map
\[
\massive{\NN} \colon \overline{\LaxStabN{\CC}{\NN}} \to \LaxStab{\CC/\NN}
\]
on the closure in $\Slice{\CC} \times \Hom{\Lambda}{\C}$. Moreover, $\massive{\NN}(\sigma) \in \Stab{\CC/\NN} \iff \sigma \in \LaxStabN{\CC}{\NN}$.
\end{proposition}

\begin{proof}
Recall we assume that $\CC$ is Hom-finite.
Therefore by \cref{prop:massive stability condition} the assignment $\sigma =(P,Z) \mapsto (P_{\CC/\NN},Z) = \massive{\NN}(\sigma)$ defines a map $\LaxStabN{\CC}{\NN} \to \Stab{\CC/\NN}$. It is continuous by \cref{lem:contractions}. The same argument shows the extension to $\overline{\LaxStabN{\CC}{\NN}}$ is continuous where defined.

Now suppose $\sigma \in \overline{\LaxStabN{\CC}{\NN}}$. Then $Z(c)=0$ for all $c\in \NN$ and $P$ restricts to $\NN$ by \cref{lem:local persistence of massless factors}. It follows that the massless subcategory $\NN_\sigma \supseteq \NN$. We claim that the slicing $P$ is well adapted to $\NN$. If this is the case then $\massive{\NN}(\sigma)$ is a well-defined lax stability condition on $\CC/\NN$ by \cref{rmk:lax quotient}. Regarding the final statement: if $\massive{\NN}(\sigma) \in \Stab{\CC/\NN}$ then the massless subcategory of $\massive{\NN}(\sigma)$ is $0 \subseteq \CC/\NN$, hence $\NN_\sigma = \NN$, \ie $\sigma \in \LaxStabN{\CC}{\NN}$.

It remains to prove the claim. First we show $P$ is adapted to $\NN$. Let $0\to a \to b\to c\to 0$ be a short exact sequence in the abelian category $P(I)$ with $b\in \NN\cap P(I)$. As $I$ is an half-open interval, there is a subinterval $J \subsetneq I$ with $a, b, c \in P(J)$. For sufficiently close $\tau = (Q,W)$ in $\LaxStabN{\CC}{\NN}$, we have $P(I) \subseteq Q(J)$. Hence there is a strict length one interval $I'$ such that $0\to a \to b\to c\to 0$ is a short exact sequence in $Q(I')$. Therefore $a, c\in \NN$ because $Q$ is adapted to $\NN$ by \cref{prop:massless thick}. Since $\NN\cap P(I)$ is clearly extension-closed it is therefore a Serre subcategory of $P(I)$. 

Now we show $P$ is well adapted to $\NN$.
By \cref{prop:quotient slicing} there is a slicing $P_{\CC/\NN}$ on the quotient. Since $d(P_{\CC/\NN}, Q_{\CC/\NN})\leq d(P,Q)$ and $\sigma \in \overline {\LaxStabN{\CC}{\NN}}$ this slicing is the limit of slicings appearing in $\Stab{\CC/\NN}$. These are locally finite, indeed the extension closures of semistable objects with phases in any interval of length strictly less than one are length categories. It follows that $P_{\CC/\NN}$ is locally finite. 
\end{proof}

\subsection{Support propagation}
\label{subsec:propagation}

Support propagates in the stability space: nearby deformations of a stability condition are also stability conditions, and not just pre-stability conditions. The key result of this section is the analogue for lax stability conditions. Here we require an extra assumption because the lax support property does not control the charges of massless objects.

We begin by relating the semi-norms associated to nearby lax pre-stability conditions. These are equivalent for nearby pre-stability conditions by \cite[Lem.~6.2]{Bridgeland07}. Our situation is more complicated because the massless subcategories may differ and we have a family of not necessarily equivalent semi-norms for each lax pre-stability condition.

\begin{lemma}
\label{lem:semi-norm bound}
Let $\sigma=(P,Z)$, $\tau=(Q,W)$ be two lax pre-stability conditions and $0<\epsilon<\delta < \tfrac{1}{6}$ such that $\tau \in B_\epsilon^\delta(\sigma)$.
Let $U\in\Hom{\Lambda}{\C}$ and denote by $U_{\NN_\sigma}$ its restriction to $\Lambda_{\NN_\sigma}$. Then
\begin{align*}
    (1-\sin\pi\epsilon) \, \norm{U}_{\tau,\delta-\epsilon} &\leq
       \max \big\{ \norm{U}_{\sigma,\delta} , (1-\sin\pi\epsilon) \, \norm{U_{\NN_\sigma}}_{\restrict{\NN_\sigma}(\tau),\delta-\epsilon} \big\} , \\
    \norm{U}_{\sigma,\delta} &\leq (1+\sin \pi\epsilon) \, \norm{U}_{\tau,\delta+\epsilon} .
\end{align*}
\end{lemma}

\begin{corollary}
\label{cor:semi-norm bound}
Let $\sigma$, $\tau$ and $0< \epsilon < \delta < \tfrac{1}{6}$ and $U\in\Hom{\Lambda}{\C}$ be as in the lemma.

If $\NN_\sigma = \NN_\tau$ or $U\in \Hom{\Lambda/\Lambda_{\NN_\sigma}}{\C}$ then
\[
  (1-\sin \pi\epsilon) \, \norm{U}_{\tau,\delta-\epsilon} \leq \norm{U}_{\sigma,\delta} \leq (1+\sin\pi\epsilon) \, \norm{U}_{\tau,\delta+\epsilon} .
\]
\end{corollary}

\begin{proof}
By \cref{lem:unpacking semi-norm}, $\tau\in B_\epsilon^\delta(\sigma)$ amounts to
\[
Q(\phi-\delta+\epsilon,\phi+\delta-\epsilon) \subseteq P(\phi-\delta,\phi+\delta) \subseteq Q(\phi-\delta-\epsilon,\phi+\delta+\epsilon)
\]
for each $\phi\in \R$ and inequalities $(1-\sin\pi\epsilon)|Z(c)| < |W(c)| < (1+\sin\pi\epsilon) |Z(c)|$ for each massive indecomposable $(\sigma,\delta)$-slim object $c$. Moreover, by \cref{lem:thick subcats in nbhd} 
the massless subcategory of $\tau$ is contained in that of $\sigma$, \ie $\NN_\tau \subseteq \NN_\sigma$. Finally, recall that $c$ is indecomposable in $P(I)$ for some interval $I$ if and only if it is indecomposable in $\CC$, and analogously for the slicing $Q$. Therefore,
\begin{align*}
\norm{U}_{\sigma,\delta}
 & = \sup \left\{ \tfrac{|U(c)|}{|Z(c)|} \colon c \not \in \NN_\sigma \text{ indecomposable $(\sigma,\delta)$-slim} \right\}\\
 & \leq (1+\sin \pi\epsilon) \, \sup \left\{ \tfrac{|U(c)|}{|W(c)|} \colon c \notin \NN_\sigma \text{ indecomposable $(\sigma, \delta)$-slim} \right\} \\
 & \leq (1+\sin \pi\epsilon) \, \sup \left\{ \tfrac{|U(c)|}{|W(c)|} \colon c \notin \NN_\tau \text{ indecomposable $(\tau, \delta+\epsilon)$-slim} \right\} \\
 & =    (1+\sin \pi\epsilon) \, \norm{U}_{\tau, \delta+\epsilon} .
\end{align*}
For the third inequality we split the supremum over $c\not \in \NN_\tau$ into suprema over $c\not \in \NN_\sigma$ and $c\in \NN_\sigma \setminus \NN_\tau$ to estimate 
\begin{align*}
\norm{U}_{\tau,\delta-\epsilon}
 & = \sup \left\{ \tfrac{|U(c)|}{|W(c)|} \colon c \not \in \NN_\tau \text{ indecomposable $(\tau,\delta-\epsilon)$-slim} \right\} \\
 & = \max\Big\{
     \sup \left\{ \tfrac{|U(c)|}{|W(c)|} \colon c \not \in \NN_\sigma \text{ indecomposable $(\tau,\delta-\epsilon)$-slim} \right\} , \\
 &   \qquad \qquad \sup \left\{ \tfrac{|U(c)|}{|W(c)|} \colon c \in \NN_\sigma \setminus \NN_\tau \text{ indecomposable $(\tau,\delta-\epsilon)$-slim} \right\}  \Big\} \\
 & \leq \max \left\{
     \tfrac{1}{1-\sin\pi\epsilon} \, \sup \left\{ \tfrac{|U(c)|}{|Z(c)|} \colon c \notin \NN_\sigma \text{ indecomposable $(\sigma,\delta)$-slim} \right\} ,
     \norm{U_{\NN_\sigma}}_{\restrict{\NN_\sigma}(\tau),\delta-\epsilon} \right\} \\
 & = \max\left\{ \tfrac{1}{1-\sin\pi\epsilon}\norm{U}_{\sigma,\delta} \, , \, \norm{U_{\NN_\sigma}}_{\restrict{\NN_\sigma}(\tau),\delta-\epsilon} \right\}.
\end{align*}
Regarding the corollary, each of $\NN_\sigma = \NN_\tau$ and $U\in \Hom{\Lambda/\Lambda_{\NN_\sigma}}{\C}$ implies $\norm{U_{\NN_\sigma}}_{\restrict{\NN_\sigma}(\tau),\delta-\epsilon} = 0$, simplifying the first inequality of the lemma.
\end{proof}

\begin{corollary}
\label{cor:propagation}
Let $0 < \epsilon < \delta < \tfrac{1}{6}$ and $\sigma$, $\tau$ two lax pre-stability conditions with $\tau \in B_\epsilon^\delta(\sigma)$. Then the following hold:
\begin{enumerate}
\item If $\tau$ has $(\delta+\epsilon)$-lax support then $\sigma$ has $\delta$-lax support. 
\item If $\sigma$ has $\delta$-lax support and $\restrict{\NN_\sigma}(\tau)$ has $(\delta-\epsilon)$-lax support then $\tau$ has $(\delta-\epsilon)$-lax support.
\end{enumerate}
\end{corollary}

\begin{proof}
Feed the inequalities of \cref{lem:semi-norm bound} into the characterisation of lax support via fullness from \cref{prop:lax support and full}. The restriction $\restrict{\NN_\sigma}(\tau)$ is well defined by \cref{lem:restriction map}.
\end{proof}

\begin{remarks}
The corollary gives a new proof of support propagation for strict stability conditions $\sigma$, using our characterisation of support via indecomposable $\delta$-slim objects. To see this, take $\tau$ with massless subcategory $\NN_\sigma$ or, equivalently, $\cm(\tau) \in \Hom{\Lambda/\Lambda_{\NN_\sigma}}{\C}$. Then the demand that $\restrict{\NN_\sigma}(\tau)$ satisfies lax support is trivial, as there are no $\tau$-massive objects in $\NN_\sigma$. Thus $\tau$ has $(\delta-\epsilon)$-lax support when $\sigma$ has $\delta$-lax support. 
\end{remarks}

\subsection{Deforming lax stability conditions}
\label{subsec:deformations}
\label{subsec:tangential deformation}
\label{subsec:normal deformation}
\label{subsec:fibrewise deformation}

\noindent
The technical heart of the theory of stability conditions is \cref{thm:deformation theorem} which governs their deformation. We cannot expect such a simple result for lax stability conditions, but it turns out that it is still possible to deform them in a reasonable way. The heuristic is that the massive and massless parts of a lax stability condition deform independently.

For a lax stability condition $\sigma = (P,Z) \in \LaxStabN{\CC}{\NN}$, the charge space decomposes as 
\[ 
\Hom{\Lambda}{\C} \cong \Hom{\Lambda_\NN}{\C} \oplus \Hom{\Lambda/\Lambda_\NN}{\C},
\]
 with $\cm(\sigma)=Z\in\Hom{\Lambda/\Lambda_\NN}{\C}$. Here, as elsewhere, we consider $\Hom{\Lambda_\NN}{\C}$ as a subspace of $\Hom{\Lambda}{\C}$ using the splitting arising from the inner product on $\Lambda\otimes \R$ --- see \cref{rmk:charge space splitting}. It is geometrically appealing to distinguish three (not mutually exclusive) cases of deformation:

\begin{enumerate}
\item 
A \defn{tangential deformation} of $\sigma$ is given by varying the charge in $\Hom{\Lambda/\Lambda_\NN}{\C}$. Such a deformation fixes the massless subcategory and hence stays inside $\LaxStabN{\CC}{\NN}$.
\item
A \defn{normal deformation} of $\sigma$ is given by varying the charge in $\Hom{\Lambda_\NN}{\C}$. Such a deformation moves out of $\LaxStabN{\CC}{\NN}$ into $\LaxStabN{\CC}{\MM}$ for some thick subcategory $\MM$ of $\NN$. We think of this as deforming in a normal direction to the stratum.
\item 
A \defn{fibrewise deformation} of $\sigma$ takes place when the charge is fixed, \ie only the slicing is deformed. In the strict setting, the charge map $\Stab{\CC}\to\Hom{\Lambda}{\C}$ has discrete fibres, so there are no non-trivial fibrewise deformations. However for the lax stability condition $\sigma$ we can potentially vary the slicing on $\NN$ in a continuous way, as this is not controlled by the charge.
\end{enumerate}
Tangential deformations are treated in \cref{prop:deformations in boundary}, normal deformations to $\Stab{\CC} $ in \cref{thm:deformation to non-lax}, and more general normal deformations to $\LaxStab{\CC}$ together with fibrewise deformations in \cref{prop:deformation off stratum}.

First, we consider tangential deformations where the charge is varied in $\Hom{\Lambda/\Lambda_\NN}{\C}$ and the massless slicing remains fixed.

\begin{proposition}
\label{prop:deformations in boundary}
Let $0 < \epsilon < \delta < \tfrac{1}{8}$, $\sigma = (P,Z) \in \LaxStabN{\CC}{\NN}$ with $\delta$-lax support and $W\in \Hom{\Lambda/\Lambda_\NN}{\C}$. If $\norm{W-Z}_{\sigma,\delta}<\sin(\pi\epsilon)$ then there is a unique lax stability condition $\tau=(Q,W) \in \LaxStabN{\CC}{\NN}$ with $d(P,Q)<\epsilon$ and massless slicing $Q_\NN=P_\NN$.
\end{proposition}

\begin{proof}
Recall we assume $\CC$ is Hom-finite. By \cref{prop:massive stability condition} the massive part $\massive{\NN}(\sigma) = (P_{\CC/\NN},Z) \in \Stab{\CC/\NN}$ is a strict stability condition on the quotient. Moreover
$\norm{W-Z}_\sigma \leq \norm{W-Z}_{\massive{\NN}(\sigma), \delta} \leq \norm{W-Z}_{\sigma,\delta}<\sin(\pi\epsilon)$
where the first inequality is \cref{lem:seminorm inequalities} and the second one \cref{prop:massive stability condition} again.
Therefore, \cref{thm:deformation theorem} provides a unique stability condition $(Q_{\CC/\NN},W) \in \Stab{\CC/\NN}$ with $d(P_{\CC/\NN}, Q_{\CC/\NN}) < \epsilon$. 

By \cref{prop:glueing slicings}, the slicings $P_\NN$ and $Q_{\CC/\NN}$ glue to a locally finite slicing $Q$ on $\CC$. 
Since $Q(\phi) \subseteq Q_{\CC/\NN}(\phi)$ for all $\phi\in \R$, the slicing $Q$ is compatible with the charge $W$. Thus $\tau=(Q,W)$ is a lax pre-stability condition with massless subcategory $\NN$, restricted slicing $Q_\NN=P_\NN$, and, by \cref{lem:slicing distance}, $d(P,Q)<\epsilon$. Uniqueness follows from \cref{cor:lax uniqueness}. 

Finally, $\tau$ satisfies lax support by \cref{cor:propagation} and so is a lax stability condition.
\end{proof}

Next we prove a variant of \cref{thm:deformation theorem} which shows that a lax stability condition with massless subcategory $\NN$ can be deformed in the normal direction with respect to a suitably small stability condition on $\NN$ to obtain a stability condition on $\CC$.

\begin{proposition}
\label{thm:deformation to non-lax}
Let $0 < \epsilon < \delta < \tfrac{1}{8}$, $\sigma=(P,Z) \in \LaxStabN{\CC}{\NN}$ with $\delta$-lax support and $\tau_\NN=(Q_\NN,W_\NN) \in \Stab{\NN}$.
If $d(P_\NN,Q_\NN)<\epsilon$ and $\norm{W_\NN}_{\sigma,\delta}<\sin(\pi\epsilon)$ then there is a unique stability condition $\tau =(Q,W) \in B_\epsilon^\delta(\sigma) \cap \Stab{\CC}$ with charge $W=Z+W_\NN$ and restriction $\restrict{\NN}(\tau)=\tau_\NN$.

Moreover, the stability condition $\tau$ depends continuously on $\sigma$ and $\tau_\NN$.
\end{proposition}

\begin{proof}
We begin by constructing $\tau=(Q,W) $ as a pre-stability condition, \ie we construct a slicing $Q$ which is compatible with $W$, using the method of \cite[\S7]{Bridgeland07}.

Recall that $P(s,t)$ is a \defn{thin subcategory} if $0<t-s<1-2\epsilon$ where $0<\epsilon<\tfrac{1}{8}$ and that 
 this implies $P(s,t)$ is quasi-abelian by \cite[Lem.~4.3]{Bridgeland07}. The charge $W=Z+W_\NN$ defines a skewed stability function \cite[Def.~4.4]{Bridgeland07} on any thin subcategory $P(s,t)$ by the obvious composite $K(P(s,t)) \to K(\CC) \to \Lambda \to \C$, \ie this group homomorphism takes every non-zero object into a rotated copy of the strict half-plane $\U \cup \R_{<0}$. 

To see why, suppose that $c\in P(\phi)$ for some $\phi\in (s,t)$. Let $A$ be a finite multi-set of stable factors of $c$ in the quasi-abelian length category $P(\phi)$. 
If, on the one hand, $a\in A$ is a massless stable object in $\NN$ then $a\in Q_\NN(\phi-\epsilon,\phi+\epsilon)$ and so $W(a) = (Z+W_\NN)(a) = W_\NN(a)$ is non-zero and therefore one can assign the phase $\frac{1}{\pi}\arg W_{\NN}(a) \in (s-\epsilon,t+\epsilon)$ to $a$.
On the other hand, if $a\not \in \NN$ is a massive stable factor then, by the definition of $\norm{\cdot}_{\sigma,\delta}$,
\[
|W(a)-Z(a)| < \sin(\pi\epsilon) |Z(a)|
\]
because $\norm{W-Z}_{\sigma,\delta} = \norm{W_\NN}_{\sigma,\delta} <\sin(\pi\epsilon)$. Therefore $W(a)\neq 0$ and differs in phase from $Z(a)$ by less than $\epsilon$, and again the phase of $a$ with respect to $W$ lies in $(s - \epsilon, t +\epsilon)$. 
We conclude that $W(c) = \sum_{a\in A} W(a) \neq 0$ with phase $\frac{1}{\pi} \arg W(c) \in (s-\epsilon,t+\epsilon)$.

The remainder of the proof follows that of \cref{thm:deformation theorem} in \cite[\S7]{Bridgeland07} {\it verbatim}. This is possible because, after the above initial step of showing that $W$ defines a skewed stability function on each thin subcategory, the charge $Z$ and the masses of objects with respect to $\sigma$ play no role in the proof, one only uses the locally finite slicing $P$. Therefore the same argument goes through and we can construct a unique pre-stability condition $\tau=(Q,W)$ with $d(P,Q)<\epsilon$. 

By \cref{lem:local persistence of massless factors} the slicing $Q$ restricts to a slicing $Q\cap\NN$ on $\NN$, and, by \cref{cor:slicing inequalities}, we have $d(P_\NN,Q\cap\NN)<\epsilon$. Since $\sigma$ has massless subcategory $\NN$ we know $Z\in \Hom{\Lambda/\Lambda_\NN}{\C}$ so that $W|_{\Lambda_\NN}=W_\NN$. Therefore, $\tau=(Q,W)$ restricts to a pre-stability condition $\restrict{\NN}(\tau)$ on $\NN$ with charge $W_\NN$ and slicing within distance $2\epsilon$ of $Q_\NN$. By \cref{cor:lax uniqueness} it follows that $\restrict{\NN}(\tau)= \tau_\NN$ as claimed.

As $\tau$ satisfies lax support by \cref{cor:propagation}, it is a stability condition by \cref{lem:lax and strict support}. 

The charge $W = Z+W_\NN$ obviously depends continuously on $\sigma$ and $\tau_\NN$. Since the charge uniquely determines the slicing in a neighbourhood of $\tau$, continuity for the slicing follows. 
\end{proof}

\begin{example}
\label{ex:norm example}
Let $\CC = \Db(\PP^1)$ and $\Lambda=K(\PP^1) \cong \Z^2$ with basis $[\mathcal{O}], [\mathcal{O}_x]$. The inner product is chosen so that this basis is orthonormal.
Let $\sigma=(P,Z)$ be the lax stability condition with charge $Z(\mathcal{O})=0$, $Z(\mathcal{O}_x)=-1$ and slicing $P=P_b$ from \cref{ex:slicings}, \ie $P(1) = \Clext{ \cO_x, \cO(n), \cO(-n)[1] : x\in\PP^1, n\in\N_{>0}}$ and $P(\tfrac{1}{2}) = \Clext{\cO}$.
The massless subcategory $\NN=\thick{}{\cO}$ and, for any $\delta\geq 0$, the massive indecomposable $\delta$-slim objects are, up to shifts, the indecomposable torsion sheaves, and the line bundles $\mathcal{O}(n)$ for $n\neq 0$.
This lax stability condition can be deformed to a strict one using the previous result. Let $\tau_\NN = (Q_\NN,W_\NN)$ where $Q_\NN=P_\NN$ is the restricted slicing with $Q_\NN(\tfrac{1}{2}) = \clext{\cO}$ and $W_\NN(\cO) = ri$ for some $r>0$. Considered as a charge in $\Hom{\Lambda}{\C}$ via the orthogonal splitting we also have $W_\NN(\cO_x)=0$, so that all torsion sheaves are in the kernel of $W_\NN$. Therefore
\[
\norm{W_\NN}_\sigma = \sup\left\{ \frac{|W_\NN(c)|}{|Z(c)|} : \text{massive indecomposable $\delta$-slim $c$} \right\} = \sup\left\{ \frac{r}{|n|} : n\neq 0\right\} = r
\]
and the assumptions of \cref{thm:deformation to non-lax} are satisfied. The deformed stability condition $\tau=(Q,W)$ has charge $W(\mathcal{O}_x)=-1$, $W(\mathcal{O}(n))=-n+ri$ and heart $Q(0,1]= \coh(\PP^1)$. Note that $d(P,Q)=\arctan(r)$ so that the slicing converges to $P$ as $r\to 0$. 
\end{example}

\begin{example}
\label{ex:non-supported}
In contrast there are lax pre-stability conditions which cannot be deformed to stability conditions. Again on $\CC = \Db(\PP^1)$, let $\sigma=(P_t,W)$ be the lax pre-stability condition defined by the charge $W(\mathcal{O}_x)=0$, $W(\mathcal{O})=i$ and slicing $P_t$ from \cref{ex:slicings}, \ie $P_t(\tfrac{1}{2}) = \Clext{ \cO(n) : n\in\Z }$ and $P_t(1) = \Clext{ \cO_x : x\in\PP^1 }$. Recall from \cref{ex:not well-supported} that $\sigma$ is not a lax stability condition.

The massless subcategory $\NN=\thick{}{\mathcal{O}_x : x\in \PP^1}$ and, for any $\delta\geq 0$, the massive indecomposable $\delta$-slim objects are, up to shifts, the line bundles $\mathcal{O}(n)$ for $n\in \Z$. Let $\sigma_\NN = ((P_t)_\NN,W_\NN)$ where $W_\NN(\mathcal{O}_x) = w$ for some $0\neq w\in\U$ and $(P_t)_\NN$ is a compatible slicing. Considering $W_\NN$ as a charge in $\Hom{\Lambda}{\C}$ via the orthogonal splitting we also have $W_\NN(\mathcal{O})=0$. Therefore
\[
\norm{W_\NN}_{\sigma,\delta} = \sup\big\{ \tfrac{|W_\NN(c)|}{|W(c)|} : \text{ massive indecomposable $\delta$-slim $c$} \big\} = \sup\{ |nw| : n\in \Z\} = \infty.
\]
Thus the assumptions of \cref{thm:deformation to non-lax} are not satisfied. 
\end{example}

\Cref{thm:deformation to non-lax} states that a lax stability condition $\sigma$ deforms uniquely with respect to a suitably small strict stability condition $\tau_\NN=(Q_\NN,W_\NN)$ on the massless subcategory $\NN \coloneqq \NN_\sigma$ to a strict stability condition. We now allow $\tau_\NN$ to be a suitably close lax stability condition and the result will be a lax stability condition with the same massless subcategory $\MM\subseteq \NN$ as $\tau_\NN$.

The deformations described in this result are in general neither purely normal nor fibrewise, but a mixture of the two: the charge $Z$ is changed by $W_\NN \in \Hom{\Lambda_\NN}{\C}$, so for $\MM\neq\NN$, a component of the deformation occurs in the normal direction, but for $\MM\neq 0$ the slicing on $\NN$ is not fully determined by $W_\NN$, and the choice of $Q_\NN$ determines the fibrewise deformation. When $\MM=\NN$ the deformation is purely fibrewise since the charge, indeed the associated stability condition on the quotient, is fixed and only the massless slicing is deformed.

\begin{theorem}
\label{prop:deformation off stratum}
Let $0 < \epsilon < \delta < \tfrac{1}{8}$ and $\sigma = (P,Z) \in \LaxStabN{\CC}{\NN}$ with $\delta$-lax support and $\tau_\NN = (Q_\NN,W_\NN) \in \LaxStabN{\NN}{\MM}$. Suppose $P_\NN$ is well adapted to $\MM$, that $d(P_\NN,Q_\NN) < \epsilon$ and $\norm{W_\NN}_{\sigma,\delta} < \sin(\pi\epsilon)$. Then there is a unique $\tau = (Q,W) \in B_\epsilon^\delta(\sigma) \cap \LaxStabN{\CC}{\MM}$ with $W = Z+W_\NN$ and restriction $\restrict{\NN}(\tau) = \tau_\NN$.

Moreover, the lax stability condition $\tau$ depends continuously on $\sigma$ and $\tau_\NN$.
\end{theorem}

\begin{proof}
For $\MM=0$ this is \cref{thm:deformation to non-lax}. When $\MM\neq 0$ the strategy is to reduce to this case by taking the quotient by $\MM$ and then lifting back up to $\CC$ using \cref{prop:glueing slicings}. 

Recall we assume $\CC$ is Hom-finite. Therefore $P$ is well adapted to $\NN$ by \cref{prop:massive stability condition}. Since $P_\NN$ is well adapted to $\MM$ it follows that $P$ is adapted to $\MM$ too. The induced slicing $P_{\CC/\MM}$ is compatible with the pair $(P_{\NN/\MM}, P_{\CC/\NN})$. Since these are both locally finite, the slicing $P$ is well adapted to $\MM$ by \cref{cor:compatible slicings}. 
Therefore, as explained in \cref{rmk:lax quotient}, $\sigma$ induces a lax stability condition $\massive{\MM}(\sigma) = (P_{\CC/\MM}, Z)$ on $\CC/\MM$. Moreover, $\massive{\MM}(\sigma)$ and $\massive{\MM}(\tau_\NN)$ satisfy the assumptions of \cref{thm:deformation to non-lax}, applied to the thick subcategory $\NN/\MM$ of $\CC/\MM$. This is because $d(P_{\NN/\MM},Q_{\NN/\MM}) \leq d(P_\NN,Q_\NN) < \epsilon$ and 
\[
\norm{W_\NN}_{\massive{\MM}(\sigma),\delta} \leq \norm{W_\NN}_{\sigma,\delta} < \sin(\pi\epsilon)
\]
by \cref{prop:massive stability condition}. Here we consider $W_\NN \in \Hom{\Lambda_\NN/\Lambda_\MM}{\C} $ as an element of $\Hom{\Lambda/\Lambda_\MM}{\C}$ in the natural way: the given splitting $\Hom{\Lambda_\NN}{\C} \inj \Hom{\Lambda}{\C}$ restricts to a splitting $\Hom{\Lambda_\NN/\Lambda_\MM}{\C} \inj \Hom{\Lambda/\Lambda_\MM}{\C}$. Therefore, by applying (the proof of) \cref{thm:deformation to non-lax} to $\massive{\MM}(\sigma)$ and $\massive{\MM}(\tau_\NN)$, there is a unique lax stability condition $(Q_{\CC/\MM}, W) \in \Stab{\CC/\MM} \cap B_\epsilon^\delta( \massive{\MM}(\sigma) )$, with charge $W = Z+W_\NN$ and restriction $\massive{\MM}(\tau_\NN)$ on $\NN/\MM$. (Note that we cannot assume $\CC/\MM$ is Hom-finite so this lax stability condition is not necessarily a stability condition even though it has no massless objects because we cannot apply \cref{lem:lax and strict support}.)

We now lift $(Q_{\CC/\MM}, W)\in \Stab{\CC/\MM}$ to a lax stability condition $(Q,W) \in \LaxStabN{\CC}{\MM}$. Since $d(P_\MM,Q_\MM) \leq d(P_\NN,Q_\NN) < \epsilon$, \cref{prop:glueing slicings} enables us to glue $Q_\MM$ and $Q_{\CC/\MM}$ to a locally finite slicing $Q$, with $d(P,Q)<\epsilon$ by \cref{lem:slicing distance}. By \cref{lem:local persistence of massless factors} this slicing $Q$ restricts to $\NN$. It follows from the construction that the restriction is the slicing glued from $Q_\MM$ and $Q_{\NN/\MM}$, which by uniqueness is $Q_\NN$. The slicing $Q$ is compatible with the charge $W$, so $\tau = (Q,W)$ is a lax pre-stability condition. By construction it is in $B_\epsilon^\delta(\sigma)$, has charge $W=Z+W_\NN$, restriction $\restrict{\NN}(\tau)=\tau_\NN$ and massless subcategory $\MM$. \Cref{cor:lax uniqueness} implies that $\tau$ is unique with these properties. Finally, $\tau$ satisfies lax support by \cref{cor:propagation} and so is a lax stability condition.

The charge $W = Z+W_\NN$ clearly depends continuously on $\sigma$ and $\tau_\NN$; continuity for the slicing follows from the fact that the slicing is glued from $Q_{\CC/\MM}$ and $Q_\MM$ which depend continuously on $\sigma$ and $\tau_\NN$ by \cref{thm:deformation to non-lax} and \cref{lem:contractions} respectively.
\end{proof}

\begin{remark}
We apply this result extensively in two special cases: first when $P_\NN=Q_\NN$ and second when $\MM=\NN$. In both these situations the technical condition that $P_\NN$ is well adapted to $\MM$ is automatic, in the first by applying \cref{prop:massive stability condition} to $\tau_\NN$ to show the lax stability condition $Q_\NN$ is well adapted to its massless subcategory $\MM$, and in the second trivially because $\NN/\MM=0$.
\end{remark}

\subsection{Group actions}
\label{subsec:group actions}

Let $\Aaut{\Lambda}{\CC}$ be the subgroup of auto-equivalences $\alpha \colon \CC \to \CC$ that are compatible with the structure morphism $v\colon K(\CC) \to \Lambda$, \ie the induced automorphism $K(\alpha) \colon K(\CC) \to K(\CC)$ descends (necessarily uniquely) to an isomorphism $[\alpha] \colon \Lambda \to \Lambda$ with $v\circ K(\alpha) = [\alpha]\circ v$. Then $\Aaut{\Lambda}{\CC}$ acts continuously on the left of $\Slice{\CC} \times \Hom{\Lambda}{\C}$ via
\[
(P,Z) \mapsto \left(\alpha \circ P, Z \circ [\alpha]^{-1} \right).
\]
There is also a continuous right action by the universal cover $G$ of the orientation-preserving component $\GL$. An element $g\in G$ corresponds to a pair $(T_g,\theta_g)$ where $T_g \in \GL$ is the projection of $g$ under the covering map and $\theta_g\colon\R \to \R$ is an increasing map with $\theta_g(t+1)=\theta_g(t)+1$ which induces the same map as $T_g$ on the circle $\R/2\Z = (\R^2-\{0\}) / \R_{>0}$. The element acts by
\begin{equation}
\label{eqn:G action}
(P,Z) \mapsto \left( P\circ \theta_g, T_g^{-1} \circ Z\right)
\end{equation}
where we think of the central charge as taking values in $\R^2$. This action preserves the semistable and stable objects and the HN filtrations of all objects. The subgroup consisting of pairs with $T$ conformal is isomorphic to $\C$ with $w\in \C$ acting via
\[
(P,Z) \mapsto \big(P( \phi + \mathrm{Re}\, w), \exp(-i\pi w)Z )\big)
\]
\ie by rotating the phases and rescaling the masses of semistable objects. The $\C$-action is free provided $\CC\neq 0$. A special case is mass dilation: $(P,Z) \cdot i\log(t)/\pi = (P,tZ)$ for $t\in\R_{\geq0}$.

The second projection $\Slice{\CC} \times \Hom{\Lambda}{\C}\to \Hom{\Lambda}{\C}$ is equivariant with respect to these actions and the evident actions on $\Hom{\Lambda}{\C}$. 
 
The actions by auto-equivalences and $\C$ preserve the semi-norms $\norm{\cdot}_{\sigma,\delta}$ for $\sigma \in\LaxStab{\CC}$ in the sense that for any $\alpha\in\Aaut{\Lambda}{\CC}$, $w\in \C$ and $U\in \Hom{\Lambda}{\C}$, we have
\begin{equation}
\label{eqn:semi-norm invariance}
\norm{ \exp(-i\pi w) \ U \circ [\alpha]^{-1}}_{\alpha\cdot \sigma\cdot w, \delta} = \norm{U}_{\sigma,\delta} .
\end{equation}
They also preserve the semi-norm neighbourhoods: $\alpha \cdot B_\epsilon^\delta(\sigma) \cdot w = B_\epsilon^\delta(\alpha \cdot \sigma\cdot w)$. By contrast, the $G$-action does not preserve the semi-norms because it distorts phase ranges.

The above actions restrict to smooth actions on the manifold $\Stab{\CC}$; the next result is the analogue for $\LaxStab{\CC}$ which is (a priori) just a topological space.

\begin{lemma}
\label{lem:actions on dstab}
The actions of $\Aaut{\Lambda}{\CC}$ and of $G$ on $\Stab{\CC}$ extend uniquely to continuous actions on $\LaxStab{\CC}$ so that the charge map is equivariant. Elements of $G$ preserve $\LaxStabN{\CC}{\NN}$ and each $\alpha \in \Aaut{\Lambda}{\CC}$ maps $\LaxStabN{\CC}{\NN}$ to $\LaxStabN{\CC}{\alpha(\NN)}$. 

The map $\massive{\NN} \colon \LaxStabN{\CC}{\NN} \to \Stab{\CC/\NN}$ is $G$-equivariant and for $\alpha \in \Aaut{\Lambda}{\CC}$, the square
\[
\begin{tikzcd}
\LaxStabN{\CC}{\NN} \ar{r}{\alpha}\ar{d}{\massive{\NN}} & \LaxStabN{\CC}{\alpha(\NN)} \ar{d}{\massive{\alpha(\NN)}}\\
\Stab{\CC/\NN} \ar{r}{\alpha} & \Stab{\CC/\alpha(\NN)}
\end{tikzcd}
\]
commutes. The map $\massive{\NN}$ is equivariant for the subgroup of $\Aaut{\Lambda}{\CC}$ preserving $\NN$.
\end{lemma}

\begin{proof}
The actions of $\Aaut{\Lambda}{\CC}$ and $G$ preserve the subspace of lax stability conditions. The equivariance of the charge map and the properties of $\massive{\NN}$ are easy to verify.
\end{proof}

\begin{remark}
\label{rem:projective stability space}
Assuming $\CC\neq0$, the right action of $\C$ on $\Stab{\CC}$ is smooth, free and proper.
The \defn{projective stability space} of $\CC$ is the complex manifold $\PStab{\CC} \coloneqq \Stab{\CC} / \C$.
The charge map $\cm \colon \Stab{\CC} \to \Hom{\Lambda}{\C}$ descends to a holomorphic map $\PStab{\CC} \to \PP( \Hom{\Lambda}{\C} )$ which by abuse of notation we also denote $\cm$.

The $\C$-action is simply transitive on the fibres of $\Stab{\CC} \to \PStab{\CC}$, so the quotient map is a smooth principal $\C$-bundle.
Since $\C$ is contractible its classifying space is homotopy equivalent to a point and the universal bundle is trivial. Therefore all smooth principal $\C$-bundles are trivial, in particular there is a diffeomorphism
\[ \Stab{\CC} \cong \PStab{\CC} \times \C . \]
\end{remark}

\section{The lax closure of the stability space}
\label{sec:topology of dstab}

\noindent
As before, $\CC$ is a Hom-finite $\kk$-linear triangulated category with a fixed surjective homomorphism $v\colon K(\CC) \to \Lambda$ to a free lattice of finite rank. We enlarge the space $\Stab{\CC}$ of stability conditions by adding lax stability conditions in its boundary. \Cref{thm:StabL structure summary} describes the geometry of this lax closure  and this theorem collects all results of the section.

\begin{definition}
\label{def:StabL}
The \defn{lax closure} of $\Stab{\CC}$ is the subspace
\[
\StabL{\CC} \coloneqq \LaxStab{\CC} \cap \overline{\Stab{\CC}} 
\]
of $\Slice{\CC}\times \Hom{\Lambda}{\C}$. The \defn{stratum} of a thick subcategory $\NN$ of $\CC$ is the subspace 
\[
\StabLN{\CC}{\NN} \coloneqq \LaxStabN{\CC}{\NN} \cap \overline{\Stab{\CC}} \subset \StabL{\CC} .
\]
\end{definition}

\subsection{Lax stability conditions in the boundary}
\label{subsec:lax sc in boundary}

We discuss when a lax stability condition is in the lax closure $\StabL{\CC}$. This is not always the case: the group of strictly increasing homeomorphisms $f\colon \R \to \R$ with $f(\phi+1)=f(\phi)+1$ acts on $\Slice{\CC}$ by $P \mapsto P \circ f$. This action provides points in $\LaxStab{\CC}$ which are not in $\StabL{\CC}$. For a specific example, let $P_g$ be the geometric slicing of $\CC = \Db(\PP^1)$ from \cref{ex:slicings}. Applying a non-linear $f$ to $P_g$ yields a slicing $(P',0) \in \Slice{\CC} = \LaxStabN{\CC}{\CC}$. However, $P'$ is incompatible with non-zero charges and so $(P',0) \notin \StabL{\CC}$.
The next result gives an inductive criterion for when a lax stability condition is in the lax closure in terms of the restriction to its massless subcategory. 

\begin{proposition}
\label{prop:dstab criterion}
The following are equivalent for $\sigma=(P,Z) \in \LaxStabN{\CC}{\NN}$:
\begin{enumerate}[label=(\roman*)]
  \item $\sigma\in \StabLN{\CC}{\NN}$;
  \item $P_\NN \in \overline{ \{ Q \colon (Q,W)\in \Stab{\NN} \}}$;
  \item $\restrict{\NN}(\sigma) \in \StabLN{\NN}{\NN}$. 
\end{enumerate}
\end{proposition}

\begin{proof}
(i) $\implies$ (ii). Suppose $\sigma \in \StabLN{\CC}{\NN}$. Then there is a sequence $(\sigma_n)$ of stability conditions in the open neighbourhood $B_\epsilon^\delta(\sigma)$ with $\sigma = \lim_{n\to \infty} \sigma_n$. The continuity of $\restrict{\NN}$, \cref{lem:contractions}, then implies $P_\NN \in \overline{ \{ Q \colon (Q,W)\in \Stab{\NN} \}}$, where the closure is taken in $\Slice{\NN}$.

(ii) $\implies$ (iii). Now suppose $P_\NN =\lim_{n\to \infty} Q_n$ where $(Q_n,W_n)\in \Stab{\NN}$ for $n\in \N$. Then $P_\NN$ is locally finite because it is the limit of locally finite slicings for which the extension closures of the semistable objects with phases in any interval of length strictly less than one are length categories. Moreover, $(Q_n, W_n / n\norm{W_n}) \in \Stab{\NN}$ and converges to $\restrict{\NN}(\sigma) = (P_\NN,0)$ in $\Slice{\NN}\times \Hom{\Lambda_\NN}{\C}$ as $n\to \infty$. So $(P_\NN,0) \in \overline{\Stab{\NN}}$ and, since lax support is automatic when all objects are massless, $(P_\NN,0) = \restrict{\NN}(\sigma) \in \StabLN{\NN}{\NN}$.

(iii) $\implies$ (i). Finally, suppose $\restrict{\NN}(\sigma)=(P_\NN,0) \in \StabLN{\NN}{\NN}$. We can choose a sequence of stability conditions $(Q_n,W_n) \in \Stab{\NN}$ converging to $(P_\NN,0)$, \ie $d(P_\NN,Q_n) \to 0$ and $W_n\to 0$ in the operator norm on $\Hom{\Lambda_\NN}{\C}$. This implies $W_n\to 0$ in the operator norm on $\Hom{\Lambda}{\C}$, using the standard splitting of \cref{rmk:charge space splitting}. As $\sigma$ satisfies $\delta$-lax support for some $\delta>0$, we get $\norm{W_n}_{\sigma,\delta}\to 0$ by \cref{prop:lax support and full}. By \cref{thm:deformation to non-lax} this sequence lifts uniquely to a sequence of stability conditions $(P_n, Z+W_n) \in \Stab{\CC}$ converging to $(P,Z)=\sigma$. Hence $\sigma\in \StabL{\CC}$.
\end{proof}

\begin{example}
Suppose $\CC = \Db(\kk A_2)$, where $A_2$ is the quiver $1 \too 2$. Define a sequence of stability conditions $\tau_n = (Q_n, W_n)$ via $W_n(S_1) = -\tfrac{1}{n}$ and $W_n(S_2) = i/2^n$, where $S_1$ and $S_2$ are the simple modules at $1$ and $2$ in the standard heart, respectively. Note that the indecomposable projective module $P_1$ in $\mod{\kk A_2}$ is also $\tau_n$-semistable.
The limit slicing $Q = \lim_{n\to \infty} Q_n$ is given by $Q(\tfrac{1}{2}) = \langle S_2\rangle$ and $Q(1) = \langle S_1 , P_1\rangle$. This is not a slicing for any (pre-)stability condition on $\Db(\kk A_2)$ because the slice $Q(1)$ is not abelian. Nevertheless, $(Q,0) \in \StabL{\CC}$ because lax support is automatic when all objects are massless. Thus, it is essential to take the closure of the slicings in \cref{prop:dstab criterion}.
\end{example}

\subsection{Local structure of the lax closure}
\label{subsec:local structure thm}

In this section we prove the structure theorem, \cref{thm:StabL structure}, which describes a neighbourhood of the stratum $\StabLN{\CC}{\NN}$ in $\StabL{\CC}$. Strata are locally closed because the map $\StabL{\CC} \to \Thick{\CC}$ assigning the massless subcategory to a lax stability condition is continuous by \cref{cor:thick subcats in nbhd}. 

\begin{lemma}
\label{lem:rho and mu restrict}
The continuous maps $\restrict{\NN}$ and $\massive{\NN}$ defined in \cref{subsec:massive and massless parts} restrict to maps
\[
\restrict{\NN} \colon \StabLN{\CC}{\NN} \to \StabL{\NN} 
  \text{ and }
\massive{\NN} \colon \overline{\StabLN{\CC}{\NN}} \to \StabL{\CC/\NN}
\]
and the latter restricts further to $\massive{\NN} \colon \StabLN{\CC}{\NN} \to \Stab{\CC/\NN}$.
Moreover, for $\sigma\in\StabLN{\CC}{\NN}$, $\epsilon \in (0, \tfrac{1}{6})$ and $\delta>0$, the restriction map extends to
$\restrict{\NN} \colon B_\epsilon^\delta(\sigma) \cap \StabL{\CC} \to \StabL{\NN}$.
\end{lemma}

\begin{proof}
We have to show that both maps preserve the property of being in the closure of the set of strict stability conditions.
For $\restrict{\NN}$ this follows from \cref{prop:dstab criterion}. By \cref{prop:massive part}
  $\massive{\NN} \colon \overline{\LaxStabN{\CC}{\NN}} \to \LaxStab{\CC/\NN}$
is continuous. Therefore the image of $\overline{\StabLN{\CC}{\NN}}$ is in the subspace $\StabL{\CC/\NN}$.
The last claim follows from \cref{lem:restriction map} and continuity.
\end{proof}

We recast the deformation result \cref{prop:deformation off stratum} in geometric form to show that the stratum $\StabLN{\CC}{\NN}$ has a `decoupling neighbourhood' in which the massive and massless parts of the stability condition deform independently. We can explicitly define open subsets on which this decoupling happens. 

$\StabLN{\CC}{\NN} \times_{\Slice{\NN}} \StabL{\NN}$ is the usual fibre product of topological spaces; it is the set of pairs $(\sigma, \tau_\NN) \in \StabLN{\CC}{\NN} \times \StabL{\NN} $ whose slicings agree on $\NN$. It contains the subset of pairs $\big( (P,Z), (P_\NN,0) \big)$ with vanishing charge in the second factor that can be written as
  $\StabLN{\CC}{\NN} \times_{\Slice{\NN}} \StabLN{\NN}{\NN} \cong \StabLN{\CC}{\NN} \times_{\Slice{\NN}} \Slice{\NN} \cong \StabLN{\CC}{\NN}$.

\begin{definition}
\label{def:open nbhd U} 
We define a subset $U(\CC,\NN) \subseteq \StabLN{\CC}{\NN} \times_{\Slice{\NN}} \StabL{\NN}$ by
\[
  U(\CC,\NN) \coloneqq \{ (\sigma,\tau_\NN) \mid \exists\, 0<\epsilon<\delta<\tfrac{1}{16} : \text{$\sigma$ has $\delta$-lax support and } \norm{\cm(\tau_\NN)}_{\sigma,\delta} <\sin(\pi\epsilon) \}.
\]
It follows from the definition of the norm $\|\cdot\|_{\sigma,\delta}$ that $U(\CC,\NN)$ is preserved by the action of the universal cover $G$ of $GL_2^+(\R)$ on $\StabLN{\CC}{\NN} \times_{\Slice{\NN}} \StabL{\NN}$. We also define the \defn{decoupling neighbourhood} $V(\CC,\NN)$ to be the set of all $\tau=(Q,W) \in \StabL{\CC}$ for which there exist $0< \epsilon<\delta<\tfrac{1}{16}$ and $\sigma=(P,Z) \in \StabLN{\CC}{\NN}$ with $\delta$-lax support such that
\begin{enumerate}
\item $\tau \in B_\epsilon^\delta(\sigma)$, so that the restriction $\restrict{\NN}(\tau)=(Q_\NN,W_\NN)$ exists by \cref{lem:restriction map},
\item the charge $W=Z+W_\NN$,
\item the restricted slicing $Q_\NN=P_\NN$.
\end{enumerate}
\end{definition}

We show in \cref{prop:deformation map} that $U(\CC,\NN)$ and $V(\CC,\NN)$ are homeomorphic. In particular this homeomorphism is $G$-equivariant so that $V(\CC,\NN)$ is preserved by the $G$ action on $\StabL{\CC}$. First we show both are open and that \cref{prop:deformation off stratum} applies to all pairs $(\sigma,\tau_\NN)\in U(\CC,\NN)$.

\begin{lemma}
\label{lem:U is open}
\label{prop:V is nbhd}
\begin{enumerate}
  \item 
  The subset $U(\CC,\NN)$ is open in $\StabLN{\CC}{\NN} \times_{\Slice{\NN}} \StabL{\NN}$ and contains
  $\StabLN{\CC}{\NN} \times_{\Slice{\NN}} \StabLN{\NN}{\NN}$.
  \item 
  If $(\sigma, \tau_\NN) \in U(\CC,\NN)$ with $\sigma = (P,Z)$ then the restricted slicing $P_\NN$ is well adapted to the massless subcategory of $\tau_\NN$.
  \item The subset $V(\CC,\NN) \subset \StabL{\CC}$ is an open neighbourhood of $\StabLN{\CC}{\NN}$.
\end{enumerate}
\end{lemma}

\begin{proof}
\begin{enumerate}[wide, labelwidth=!, labelindent=1em]
\item
We have $U(\CC,\NN) \supset \StabLN{\CC}{\NN} \times_{\Slice{\NN}} \StabLN{\NN}{\NN}$ because $\tau_\NN = (P_\NN,0)$ has no massive objects whatsoever, so that the norm inequality holds trivially.

Since the fibre product has the subspace topology from the product $\StabLN{\CC}{\NN} \times \StabL{\NN}$, it suffices to show that the set of pairs $(\sigma,\tau_\NN)$ in this product for which there exist $0 < \epsilon < \delta < \tfrac{1}{16}$ such that $\sigma$ has $\delta$-lax support and $\norm{W_\NN}_{\sigma,\delta} < \sin(\pi\epsilon)$ is open, where $W_\NN=\cm(\tau_\NN)$. Let $(\sigma, \tau_\NN)$ be such a pair. Choose $\epsilon<\epsilon'<\delta'<\delta$ such that 
\begin{equation}
\label{eqn:delta' choice}
C(\delta',\epsilon')\coloneqq (1-\sin(\pi(\delta-\delta'))\sin(\pi\epsilon')-\sin(\pi\epsilon) >0.
\end{equation}
Then any $(\sigma',\tau_\NN')$ in the product with $\sigma'\in B_{\delta-\delta',\delta}(\sigma)$ and 
$\norm{W_\NN'-W_\NN}_{\sigma,\delta} < C(\delta',\epsilon')$
satisfies the required condition because $\sigma'$ has $\delta'$-lax support by \cref{cor:propagation} and
\[
\norm{W_\NN'}_{\sigma',\delta'} 
\leq \frac{\norm{W_\NN'}_{\sigma,\delta}}{1-\sin(\pi(\delta-\delta'))}
\leq \frac{\sin(\pi\epsilon)+\norm{W_\NN'-W_\NN}_{\sigma,\delta}}{1-\sin(\pi(\delta-\delta'))}
<\sin(\pi\epsilon')
\]
by \cref{cor:semi-norm bound}, the triangle inequality and \cref{eqn:delta' choice}. This is clearly an open condition on $(\sigma',\tau_\NN')$ so $U(\CC,\NN)$ is open as claimed.
\item
 The slicing $Q_\NN$ of $\tau_\NN$ is well adapted to its massless subcategory by \cref{prop:massive stability condition}, hence the same holds for $P_\NN = Q_\NN$.
\item
Let $\tau=(Q,W)\in V(\CC,\NN)$. By definition of $V(\CC,\NN)$, we can choose $0<\epsilon<\delta$ and $\sigma=(P,Z)\in \StabLN{\CC}{\NN}$ with $\delta$-lax support, $\tau\in B_\epsilon^\delta(\sigma)$, charge $Z=W-W_\NN$ and $P_\NN = Q_\NN$ where $\restrict{\NN}(\tau)=(Q_\NN,W_\NN)$. Choose $\eta>0$ sufficiently small that there exists $\epsilon +\eta < \epsilon' < \delta' \coloneqq \delta-\eta$ with $\sin(\pi\epsilon') > \sin(\pi\epsilon) / (1-\sin(\pi\eta))$. 

We claim there is $\eta' \in (0, \epsilon'-\epsilon-\eta)$ such that for any $\tau' = (Q',W') \in \StabL{\CC}$ with
\begin{equation}
\label{eqn:tau' condition 1}
\norm{ (W'-W_\NN') - (W-W_\NN)}_{\sigma,\delta} < \sin(\pi\eta') \quad \text{and} \quad d(Q',Q)<\eta'
\end{equation}
we can find $\sigma' = (P',Z') \in B_\eta^\delta(\sigma) \cap \StabLN{\CC}{\NN}$ with charge $Z'=W'-W_\NN'$ and restricted slicing $P_\NN' = Q_\NN'$. 
To construct $\sigma'$, first, inequality~\eqref{eqn:tau' condition 1} allows us to apply \cref{prop:deformations in boundary} to deform the charge of $\sigma$ to $Z' = W'-W_\NN'$, keeping the massless slicing fixed, to obtain a lax stability condition $\sigma'' = (P,Z')$ with $(\delta-\eta')$-lax support by \cref{cor:propagation}.
Second, applying \cref{prop:deformation off stratum}, we use $(Q'_\NN,0)$ to deform $\sigma''$ to a unique $\sigma' = (P', Z') \in B_{\eta'}^{\delta-\eta'}(\sigma'') \cap \LaxStabN{\CC}{\NN}$. 
As $B_\eta^\delta(\sigma)$ is open we can choose $\eta'>0$ sufficiently small that the resulting lax stability condition $\sigma'$ lies in $B_\eta^\delta(\sigma) \cap \LaxStabN{\CC}{\NN}$. Finally, since $\restrict{\NN}(\sigma') = (Q_\NN',0) \in \StabLN{\NN}{\NN}$ by \cref{prop:dstab criterion} we conclude $\sigma' = (P',Z') \in B_\eta^\delta(\sigma) \cap \StabLN{\CC}{\NN}$ by \cref{prop:dstab criterion} again.

If, in addition to inequality~\eqref{eqn:tau' condition 1}, $\tau'$ is such that
\begin{equation}
\label{eqn:tau' condition 2}
\norm{W_\NN'-W_\NN}_{\sigma,\delta} < (1-\sin(\pi\eta))\sin(\pi\epsilon') - \sin(\pi\epsilon)
\end{equation}
then $\tau' \in V(\CC,\NN)$. This is because $\sigma'$ has $\delta'$-lax support by \cref{cor:propagation}, the slicing distance $d(P',Q') \leq d(P',P) + d(P,Q)+d(Q,Q') \leq \eta+\epsilon+\eta' < \epsilon'$, and we have the norm estimate
\[
\norm{W_\NN'}_{\sigma',\delta'} 
\leq \frac{\norm{W_\NN'}_{\sigma,\delta} }{1-\sin(\pi\eta)}
\leq \frac{\norm{W_\NN}_{\sigma,\delta} + \norm{W_\NN'-W_\NN}_{\sigma,\delta} }{1-\sin(\pi\eta)}
   < \frac{\sin(\pi\epsilon) + \norm{W_\NN'-W_\NN}_{\sigma,\delta} }{1-\sin(\pi\eta)}
\leq \sin(\pi\epsilon')
\]
by \cref{cor:semi-norm bound}. The requirements \eqref{eqn:tau' condition 1} and \eqref{eqn:tau' condition 2} are open, hence so is the subset $V(\CC,\NN)$.
\qedhere
\end{enumerate}
\end{proof}

\begin{lemma}
\label{lem:stratum retraction}
There is a continuous retraction
\[ \Phi_\NN \colon V(\CC,\NN) \to \StabLN{\CC}{\NN} \]
such that $ \cm(\tau) = \cm(\Phi_\NN(\tau)) +\cm(\restrict{\NN}(\tau))$ and the slicings of $\restrict{\NN}(\tau)$ and $\restrict{\NN}(\Phi_\NN(\tau))$ coincide. The germ of $\Phi_\NN$ along $\StabLN{\CC}{\NN} \subset V(\CC,\NN)$ is uniquely determined by these properties.
\end{lemma}

\begin{proof}
For each $\tau\in V(\CC,\NN)$ there is a unique choice of $\sigma\in \StabLN{\CC}{\NN}$ satisfying the properties in the definition of $V(\CC,\NN)$. This is because the massless subcategory of $\sigma$ is fixed, the charge is determined by the charge additivity and the massless slicing by the coincidence of restricted slicings, so that the condition $\tau \in B_\epsilon^\delta(\sigma)$ where $\epsilon < \tfrac{1}{16}$ uniquely determines $\sigma$ by 
\Cref{cor:lax uniqueness}. Setting $\Phi_\NN(\tau)=\sigma$ defines a retraction because the uniqueness implies $\Phi_\NN(\sigma)=\sigma$ for all $\sigma\in \StabLN{\CC}{\NN}$. By construction, $\Phi_\NN$ has the required charge additivity and slicing properties. For continuity, recall that $\tau=(Q,W) \in B_\epsilon^\delta(\Phi_\NN(\tau))$ and suppose $\tau' =(Q',W') \in B_\epsilon^\delta(\Phi_\NN(\tau))$ too. Then, since $\tau' \in B_\epsilon^\delta(\Phi_\NN(\tau'))$, the respective slicings of $\Phi_\NN(\tau)$ and $\Phi_\NN(\tau')$ satisfy $d(P,P') \leq d(Q,Q')+2\epsilon$ where $P$ and $P'$. Since the charge component of $\Phi_\NN$ is given by linear projection, the map $\Phi_\NN$ is continuous. 

If $\Phi$ is any continuous retraction with the properties of the lemma then the charge and massless slicing of $\Phi(\tau)$ are uniquely determined. As $\Phi$ is a continuous retraction, \cref{cor:lax uniqueness} implies that $\Phi$ and $\Phi_\NN$ coincide on an open neighbourhood of $\StabLN{\CC}{\NN}$, \ie the germ of $\Phi_\NN$ along that subset is uniquely determined.
\end{proof}

The next two results show first that the decoupling neighbourhood $V(\CC,\NN)$ is homeomorphic to $U(\CC,\NN)$, and second that it is an open neighbourhood of the stratum $\StabLN{\CC}{\NN}$ in $\StabL{\CC}$.

\begin{proposition}
\label{prop:deformation map}
There is a homeomorphism
\[ \deform{\NN} \colon U(\CC,\NN) \to V(\CC,\NN) \]
with inverse $\Phi_\NN\times \restrict{\NN} \colon V(\CC,\NN) \to U(\CC,\NN)$ and the properties
$\cm(\deform{\NN}(\sigma,\tau_\NN)) = \cm(\sigma)+\cm(\tau_\NN)$ and $\restrict{\NN}(\deform{\NN}(\sigma,\tau_\NN)) = \tau_\NN$ for all $(\sigma,\tau_\NN) \in U(\CC,\NN)$.

The germ of $\deform{\NN}$ along $\StabLN{\CC}{\NN}\times_{\Slice{\NN}} \StabLN{\NN}{\NN} \subset U(\CC,\NN)$ is uniquely determined by continuity and these properties. Moreover, $\deform{\NN}$ is equivariant with respect to the $G$ actions on $\StabLN{\CC}{\NN}\times_{\Slice{\NN}} \StabL{\NN} $ and $\StabL{\CC}$.
\end{proposition}

\begin{proof}
\Cref{prop:deformation off stratum} applies to any pair $(\sigma,\tau_\NN) \in U(\CC,\NN)$. We can define $\deform{\NN}(\sigma,\tau_\NN) $ to be the unique lax stability condition $\tau \in B_\epsilon^\delta(\sigma)$ with charge $\cm(\tau) = \cm(\sigma)+\cm(\tau_\NN)$ and restriction $\restrict{\NN}(\tau) = \tau_\NN$ constructed therein. The resulting map $\deform{\NN} \colon U(\CC,\NN) \to \LaxStab{\CC}$ is continuous by the final statement of \cref{prop:deformation off stratum}. Its $G$-equivariance follows from the above properti

We need to show $\deform{\NN}(\sigma,\tau_\NN) \in \StabL{\CC}$, \ie can be approximated by strict stability conditions. 
By construction, $\deform{\NN}(\sigma,\tau_\NN) \in \LaxStabN{\CC}{\MM}$, where $\MM\subseteq \NN$ is the massless subcategory of $\tau_\NN$, and by \cref{prop:dstab criterion}
\[
\restrict{\MM}(\deform{\NN}(\sigma,\tau_\NN)) = \restrict{\MM} \circ \restrict{\NN}(\deform{\NN}(\sigma,\tau_\NN)) = \restrict{\MM}(\tau_\NN) \in \StabLN{\MM}{\MM} .
\]
We deduce $\deform{\NN}(\sigma,\tau_\NN) \in \StabLN{\CC}{\MM} \subset \StabL{\CC}$ from a second application of \cref{prop:dstab criterion}.

By construction, $\restrict{\NN}(\deform{\NN}(\sigma,\tau_\NN)) = \tau_\NN$. Also $\Phi_\NN(\deform{\NN}(\sigma,\tau_\NN)) =\sigma$ because $\deform{\NN}(\sigma,\tau_\NN) \in B_\epsilon^\delta(\sigma)$ has charge $Z+W_\NN$ and restricted slicing $P_\NN=Q_\NN$ so \cref{cor:lax uniqueness} applies, where $\sigma=(P,Z)$ and $\tau_\NN=(Q_\NN,W_\NN)$. Hence $\Phi_\NN\times \restrict{\NN}$ is left inverse to $\deform{\NN}$.
Conversely, the definitions imply $(\Phi_\NN(\tau),\restrict{\NN}(\tau)) \in U(\CC,\NN)$ for any $\tau\in V(\CC,\NN)$. Furthermore, $\deform{\NN}(\Phi_\NN(\tau),\restrict{\NN}(\tau))=\tau$ by \cref{cor:lax uniqueness} because both sides share charge, massless subcategory and massless slicing, and their slicings are within $\epsilon<\tfrac{1}{16}$ of that of $\Phi_\NN(\tau)$. Thus $\deform{\NN}$ is continuous with inverse $\Phi_\NN\times\restrict{\NN}$, which is continuous by \cref{lem:stratum retraction,,lem:contractions}.

If $\deform{}$ is any map with the three listed properties then the charge, massless subcategory and massless slicing of $\deform{}(\sigma,\tau_\NN)$ are uniquely determined. The continuity of $\deform{}$ together with the fact that $\deform{}(\sigma, \restrict{\NN}(\sigma))=\sigma$ then uniquely determine the slicing, so that $\deform{} = \deform{\NN}$ on some neighbourhood of $\StabLN{\CC}{\NN}\times_{\Slice{\NN}} \StabLN{\NN}{\NN}$ by \cref{cor:lax uniqueness}. Hence the germ of $\deform{\NN}$ along this subset is unique.
\end{proof}

The set $U(\CC,\NN)$ from \cref{def:open nbhd U} is a subset of the product $\StabLN{\CC}{\NN} \times \StabL{\NN}$, so that the map $\massive{\NN}\times\id \colon U(\CC,\NN) \to \Stab{\CC/\NN} \times \StabL{\NN}$ below is well defined.

\begin{theorem}
\label{thm:StabL structure}
Let $\NN\subseteq \CC$ be a thick subcategory. Then there is a commutative diagram 
\[
\begin{tikzcd}[column sep=large]
  \StabLN{\CC}{\NN} \ar[equals]{r} \ar[hook]{d}
& \StabLN{\CC}{\NN} \ar{d}{\id \times \restrict{\NN}} \ar{r}{\massive{\NN}\times \restrict{\NN}}
& \Stab{\CC/\NN} \times \StabLN{\NN}{\NN} \ar[hook]{d}
\\
 V(\CC,\NN)
& U(\CC,\NN)        \ar{l}[swap]{\deform{\NN}} \ar{r}{\massive{\NN}\times\id}
& \Stab{\CC/\NN}\times \StabL{\NN}
\end{tikzcd}
\]
in which all maps are injective and $G$-equivariant, and the horizontal maps are open embeddings. The maps in the bottom row are stratum-preserving where $V(\CC,\NN)$ inherits its stratification from $\StabL{\CC}$, and $U(\CC,\NN)$ and $\Stab{\CC/\NN}\times\StabL{\NN}$ are stratified respectively by $\StabLN{\CC}{\NN}\times_{\Slice{\NN}}\StabLN{\NN}{\MM}$ and $\Stab{\CC/\NN}\times\StabLN{\NN}{\MM}$ for thick $\MM \subseteq \NN$. 
\end{theorem}

\begin{proof}
The maps are defined and continuous by \cref{lem:rho and mu restrict} and \cref{prop:deformation map}, and $\restrict{\NN}$, $\massive{\NN}$ and $\deform{\NN}$ are all $G$-equivariant. The commutativity of the left hand square is the previously noted identity $\deform{\NN}(\sigma,\restrict{\NN}(\sigma))=\sigma$; that of the right hand square is evident. The left and right hand vertical maps are inclusions, and the middle vertical map is evidently injective with left inverse the first projection. It remains to show that the horizontal maps are open and injective. 

The product $\massive{\NN}\times\restrict{\NN}$ is injective: given $\sigma = (P,Z) \in \StabLN{\CC}{\NN}$, the slicings $P_\NN$ and $P_{\CC/\NN}$ determine $P$ uniquely by \cref{cor:uniqueness of compatibility}, \cref{prop:quotient slicing,,prop:massless thick} and the charge $Z$ is determined by its factorisation through $\Lambda/\Lambda_\NN$. 
To see that it is open, note that we can deform the charge of $\sigma$ to any nearby charge in $\Hom{\Lambda/\Lambda_\NN}{\C}$ whilst keeping the massless slicing $P_\NN$ constant using \cref{prop:deformations in boundary}, and independently we can deform the massless slicing $P_\NN$ to any nearby massless slicing whilst keeping the charge constant using \cref{prop:deformation off stratum}. Combining these two shows that $\massive{\NN}\times\restrict{\NN}$ is an open map.

The map $\massive{\NN}\times \id$ is injective and open for essentially the same reasons; the slicing of $\sigma$ in $\StabLN{\CC}{\NN}$ can be reconstructed uniquely from the slicing of $\massive{\NN}(\sigma)$ and the slicing on $\NN$ by \cref{prop:uniqueness of compatibility,prop:quotient slicing,,prop:massless thick}, and we can deform the charge of $\sigma$ in $\Hom{\Lambda/\Lambda_\NN}{\C}$ whilst keeping the massless slicing constant.

The map $\deform{\NN}$ is open and injective by \cref{prop:deformation map,,prop:V is nbhd}. It is immediate from the definition that $\massive{\NN}\times \id$ is stratum-preserving; for $\deform{\NN}$ it follows because $\deform{\NN}(\sigma,\tau)$ has the same massless category as $\tau$.
\end{proof}

\begin{definition}
\label{def:deformation retract}
The \defn{stratum deformation retraction} is the map
\[
  \Phi_{\NN,t} \coloneqq \deform{\NN} \circ ( \Phi_\NN \times t\cdot \restrict{\NN} ) \colon V(\CC,\NN) \to V(\CC,\NN)
\]
where $t\in[0,1]$ and $t\cdot (Q_\NN,W_\NN) = (Q_\NN,tW_\NN)$ is mass dilation in $\StabL{\NN}$. For $t>0$ this is given by acting by $i\log(t)/\pi \in\C^*$ on the right as described in \cref{subsec:group actions}.
\end{definition}

\begin{remark}
\label{rmk:retraction identities}
From the definition $\Phi_{\NN,1}=\id$. The commutativity of the left hand square in \cref{thm:StabL structure} applied to $\Phi_\NN(\tau) \in \StabLN{\CC}{\NN}$ implies $\Phi_{\NN,0} = \Phi_\NN$.
Moreover, $\Phi_\NN \circ \Phi_{\NN,t} = \Phi_\NN$, and $\restrict{\NN}\circ \Phi_{\NN,t} = t\cdot \restrict{\NN}$ as $\restrict{\NN} \circ \deform{\NN}$ is the second projection by \cref{prop:deformation map}.
\end{remark}

\begin{corollary}
\label{cor:V homeo}
The map $(\massive{\NN}\circ\Phi_\NN)\times\restrict{\NN}\colon V(\CC,\NN) \to \Stab{\CC/\NN}\times \StabL{\NN}$ is a homeomorphism onto an open neighbourhood of the image of $\massive{\NN}\times \restrict{\NN} \colon \StabLN{\CC}{\NN} \inj \Stab{\CC/\NN} \times \StabLN{\NN}{\NN}$. 
\end{corollary}

\begin{proof}
This is immediate from \cref{thm:StabL structure}.
\end{proof}

\begin{remark}
\label{rmk:mass decoupling}
We refer to the previous result as \defn{mass decoupling}: in a neighbourhood of $\StabLN{\CC}{\NN}$ the massive and massless components, $\massive{\NN}\circ \Phi_\NN(\sigma)$ and $\restrict{\NN}(\sigma)$ respectively, of the stability condition $\sigma$ `decouple' and deform independently of one another. 
\end{remark}

\begin{proposition}
\label{prop:deformation retract 1}
The map $V(\CC,\NN) \times [0,1] \to V(\CC,\NN)$, $(\sigma,t) \mapsto \Phi_{\NN,t}(\sigma)$ is an almost-stratum-preserving deformation retraction of $V(\CC,\NN)$ onto $\StabLN{\CC}{\NN}$, \ie a deformation retraction such that if $\tau \in \StabLN{\CC}{\MM}$ for some thick $\MM \subseteq \NN$ then $\Phi_{\NN,t}(\tau)\in \StabLN{\CC}{\MM}$ for $0<t\leq 1$.
\end{proposition}

\begin{proof}
From \cref{rmk:retraction identities}, $\Phi_{\NN,0} = \Phi_\NN$ and $ \Phi_{\NN,1} = \id$. Continuity follows from continuity of $\deform{\NN}$, $\Phi_\NN$ and $\massive{\NN}$. Finally, if $\sigma = (P,Z) \in \StabLN{\CC}{\NN}$ then $\Phi_\NN(\sigma)=\sigma$ and 
\[
\Phi_{\NN,t}(\sigma)= \deform{\NN}(\Phi_\NN(\sigma), t\cdot\restrict{\NN}(\sigma)) = \deform{\NN}(\sigma, \restrict{\NN}(\sigma)) = \sigma
\]
for all $0<t\leq 1$ too, because $\restrict{\NN}(\sigma) = (P_\NN,0)$. Therefore the map is a deformation retraction. 

The massless subcategory of $\tau\in V(\CC,\NN)$ is determined by the restriction $\restrict{\NN}(\tau)$. So the deformation retraction is almost-stratum-preserving because $\restrict{\NN}( \Phi_{\NN,t}(\tau)) = t\cdot \restrict{\NN}(\tau)$ has the same massless subcategory as $\tau$ for $0<t\leq 1$.
\end{proof}

\begin{remark}
\label{rmk:V examples}
When $\NN=0$ the open neighbourhood $V(\CC,\NN)=\Stab{\CC}$ and $\Phi_{\NN,t} = \id$ for all $t\in [0,1]$. At the other extreme, when $\NN=\CC$, the open neighbourhood $V(\CC,\NN) = \StabL{\CC}$ is the entire lax closure and the deformation retraction $\Phi_{\NN,t}(Q,W) = (Q,tW)$ is given by uniform mass dilation, in particular, $\Phi_\NN(Q,W) = (Q,0)$ is the projection onto $\Slice{\CC}$.
\end{remark}

Although we refer to $\StabLN{\CC}{\NN}$ as a stratum of $\StabL{\CC}$, the decomposition into these subsets is {\em not} a stratification in (any) classical sense. In general the `strata' $\StabLN{\CC}{\NN}$ are not manifolds and do not satisfy the frontier condition, \ie the closure of a stratum need not be a union of strata.

\begin{example}
Let $\CC=\Db(\PP^1)$ and consider the pair $\MM = \thick{}{\mathcal{O}} \subset \Db(\PP^1) = \NN$. The intersection $\StabLN{\CC}{\NN} \cap \overline{\StabLN{\CC}{\MM}} \neq \emptyset$ because it contains the pairs $(P,0)$ where the slicing $P$ occurs in a pair $(P,Z)\in \Stab{\CC}$ for which $\mathcal{O}$ is semistable. However, it is not the whole of $\StabLN{\CC}{\NN}$ because it does not contain any slicings for which $\mathcal{O}$ is unstable, such as those in which the only two semistable objects are $\mathcal{O}(1)$ and $\mathcal{O}(2)$ and their shifts.
\end{example}
An important consequence of the frontier condition is that when it holds the strata are partially ordered by containment of their closures. Surprisingly, even though the frontier condition fails, the strata of the lax closure admit a similar partial order.

\begin{lemma}
\label{lem:lax partial order}
Define a binary relation on the strata of $\StabL{\CC}$ by
\[
\StabLN{\CC}{\NN} \leq \StabLN{\CC}{\NN'} \iff \StabLN{\CC}{\NN} \cap \overline{\StabLN{\CC}{\NN'}} \neq \emptyset.
\]
This is a partial order, and moreover $\StabLN{\CC}{\NN} \leq \StabLN{\CC}{\NN'} \iff \StabLN{\NN}{\NN'} \neq \emptyset$.
\end{lemma}

\begin{proof}
The final statement holds because $\StabLN{\CC}{\NN}$ has an open neighbourhood stratum-preserving homeomorphic to an open neighbourhood in $\Stab{\CC/\NN} \times \StabL{\NN}$ by \cref{thm:StabL structure}.

The binary relation is clearly reflexive, and if $\StabLN{\CC}{\NN} \leq \StabLN{\CC}{\NN'}$ then $\NN' \subseteq \NN$ so that it is also antisymmetric. To show that it is transitive we show that if both $\StabLN{\NN}{\NN'}$ and $\StabLN{\NN'}{\NN''}$ are non-empty then so is $\StabLN{\NN}{\NN''}$. To see this note that the non-empty stratum $\StabLN{\NN}{\NN'}$ in $\StabL{\NN}$ has an open neighbourhood homeomorphic, by a homeomorphism which preserves the strata where particular subcategories are massless, to an open neighbourhood in $\Stab{\NN/\NN'} \times \StabL{\NN'}$. In particular, since $\Stab{\NN/\NN'} \times \StabLN{\NN'}{\NN''} \neq \emptyset$ we conclude $\StabLN{\NN}{\NN''} \neq \emptyset$.
\end{proof}

The following result will be useful in showing that the strata of the quotient stability space introduced in the next section do satisfy the frontier condition.

\begin{lemma}
\label{lem:closed union of strata}
For any thick subcategory $\NN$ of $\CC$ the union $\bigcup_{\StabLN{\NN'}{\NN}\neq \emptyset} \StabLN{\CC}{\NN'}$ of all strata below $\StabLN{\CC}{\NN}$ is closed.
\end{lemma}

\begin{proof}
Suppose $\StabLN{\CC}{\MM}$ is \emph{not} below $\StabLN{\CC}{\NN}$. Its open neighbourhood $V(\CC,\MM)$ is contained in the union of strata above $\StabLN{\CC}{\MM}$, and hence not below $\StabLN{\CC}{\NN}$. Therefore the complement of $\bigcup_{\StabLN{\NN'}{\NN} \neq \emptyset} \StabLN{\CC}{\NN'}$ is open.
\end{proof}

For ease of reference we state the following summary. 

\begin{theorem}
\label{thm:StabL structure summary}
The lax closure $\StabL{\CC}$ is a topological space with a decomposition 
\[
\StabL{\CC} = \bigsqcup_{\NN \in \Thick{\CC}} \StabLN{\CC}{\NN}
\]
into (possibly empty) strata $\StabLN{\CC}{\NN}$ indexed by thick subcategories $\NN \subseteq \CC$. There is a continuous map $\cm \colon \StabL{\CC}\to\Hom{\Lambda}{\CC}$ sending $\StabLN{\CC}{\NN}$ to $\Hom{\Lambda/\Lambda_\NN}{\C}$. 
\begin{enumerate}[leftmargin=*]
\item The stratum $\StabLN{\CC}{0} \cong \Stab{\CC}$ is open and dense. 
\item The strata $\StabLN{\CC}{\NN}$ are locally closed subsets of $\StabL{\CC}$.
\item The set of strata is partially ordered by
\[
\StabLN{\CC}{\NN} \leq \StabLN{\CC}{\MM} 
\iff \StabLN{\CC}{\NN} \cap \overline{\StabLN{\CC}{\MM}} \neq \emptyset
\iff \StabLN{\NN}{\MM} \neq \emptyset
\]
and this poset has height bounded by $\rk(\Lambda)$.
\item 
\label{local model V}
The subset $V(\CC,\NN)$ is an open neighbourhood of the stratum $\StabLN{\CC}{\NN}$ and
\[
V(\CC,\NN)\longrightarrow \Stab{\CC/\NN} \times \StabL{\NN}, \quad \sigma \mapsto (\massive{\NN} \circ \Phi_\NN(\sigma),\restrict{\NN}(\sigma))
\]
is a homeomorphism onto an open neighbourhood of the image of $\StabLN{\CC}{\NN}$.
\item There is an almost-stratum-preserving deformation retraction
\[
  V(\CC,\NN)\times [0,1] \to V(\CC,\NN), \quad (\sigma,t) \mapsto \Phi_{\NN,t}(\sigma)
\]
of $V(\CC,\NN)$ onto the stratum $\StabLN{\CC}{\NN}$ with $\Phi_{\NN,0} = \Phi_{\NN}$.
\item The right action on $\Stab{\CC}$ of the universal cover $G$ of $\GL$ extends continuously to an action on $\StabL{\CC}$ preserving the strata. The maps $\restrict{\NN}$, $\massive{\NN}$ and $\Phi_{\NN,t}$ are $G$-equivariant.

The subgroup $\C \subset G$ acts freely except on the deepest stratum $\StabLN{\CC}{\CC}$; the subgroup $\R \subset \C$ of phase translations still acts freely on the deepest stratum but the subgroup $i\R \subset \C$ of mass dilations stabilises each point.
\item The left action on $\Stab{\CC}$ of $\Aaut{\Lambda}{\CC}$ extends continuously to $\StabL{\CC}$ with $\alpha\in \Aaut{\Lambda}{\CC}$ mapping $\StabLN{\CC}{\NN}$ to $\StabLN{\CC}{\alpha(\NN)}$.
\end{enumerate}
\end{theorem}

\begin{proof}
The proofs can be found as follows:
\begin{enumerate}[wide, labelwidth=!, labelindent=1em]
\item Immediate from the definition $\StabL{\CC} \coloneqq \overline{\Stab{\CC}} \cap \LaxStab{\CC}$ and \cref{cor:thick subcats in nbhd}.
\item Follows from \cref{cor:thick subcats in nbhd}.
\item For existence see \cref{lem:lax partial order}; for the bound on the height see \cref{cor:lax poset bded}.
\item See \cref{thm:StabL structure}.
\item See \cref{prop:deformation retract 1}.
\item Follows from \cref{lem:actions on dstab}.
\item Follows from \cref{lem:actions on dstab}. \qedhere
\end{enumerate}
\end{proof}

\subsection{Codimension one strata}
\label{subsec:codim one lax}

We investigate the case in which the massless subcategory is non-zero but `as small as possible'. More precisely, let $\NN$ be a thick subcategory of $\CC$ for which the saturation $\Lambda_\NN$ of the image of $K(\NN) \to K(\CC) \to \Lambda$ is a rank one lattice. We assume that $\StabLN{\CC}{\NN}$ is non-empty. 

By \cref{thm:StabL structure summary} this stratum has an open neighbourhood $V(\CC,\NN)$ homeomorphic to an open subset of $\Stab{\CC/\NN} \times \StabL{\NN}$ via $\sigma \mapsto ( \Phi_\NN(\sigma) , \restrict{\NN}(\sigma) )$. We begin by describing $\StabL{\NN}$. Observe that it, hence also $\Stab{\NN}$, is non-empty. The charge 
\[
\cm \colon \Stab{\NN} \to \Hom{\Lambda_\NN}{\C}
\]
is the universal cover of the set $\Hom{\Lambda_\NN}{\C}^*$ of non-zero charges. The stability space $\Stab{\NN}$ is a single orbit of the $\C$ action; the semistable objects are the same in each stability condition and lie in a single slice, which is a length heart $\cat{H}$ in $\NN$. The slicing is determined by a single phase in $\R$, and every phase occurs, so $\StabLN{\NN}{\NN} \cong \R$ by \cref{prop:dstab criterion}. 

To describe the topology of $\StabL{\NN}$ choose a generator $\lambda \in \Lambda_\NN$ for the free summand and identify $\Hom{\Lambda_\NN}{\C} \cong \C$ via $Z \mapsto Z(\lambda)$. Then there is a commutative diagram
\[
\begin{tikzcd}
\R \cong \StabLN{\NN}{\NN} \ar{d} \ar[hook]{r} &
\StabL{\NN} \ar{d} \ar[hookleftarrow]{r} & 
 \Stab{\NN} \cong \C \ar{d} \\
S^1 \ar{d}  \ar[hook]{r} & 
\mathrm{Bl}_\R(\C,0) \ar{d} \ar[hookleftarrow]{r}
& 
\C^* \ar[equals]{d} \\
0 \ar[hook]{r} & \C \ar[hookleftarrow]{r} & \C^*
\end{tikzcd}
 \]
in which $\mathrm{Bl}_\R(\C,0)$ denotes the real oriented blowup of $\C$ at the origin; the left hand maps are, in ascending order, the inclusions of the centre of the blowup, the exceptional divisor and its universal cover; the right hand maps are the inclusions of the complements; the upper row of maps are the universal covers, and the lower row the blowdown map and its restriction to exceptional divisor and complement. The composite of the middle two vertical maps is the charge map for the lax closure.

The open neighbourhood $V(\CC,\NN)$ is homeomorphic to the subset of $\left( (P_{\CC/\NN},Z_{\CC/\NN}), (P_\NN,W_\NN) \right)$ in $\Stab{\CC/\NN} \times \StabL{\NN}$ for which there exists a locally finite slicing $P$ compatible with $(P_{\CC/\NN},P_\NN)$ such that $(P,Z_{\CC/\NN})$ satisfies lax support. This is open by \cref{prop:glueing slicings} and \cref{cor:propagation}. Note that $(P,Z_{\CC/\NN})$ is in the closure of $\Stab{\CC}$ by \cref{prop:dstab criterion} and that all objects in the heart $\cat{H}$ of $\NN$ are semistable of the same phase for each lax stability condition in $V(\CC,\NN)$; this common massless phase is the only information in the slicing $P_\NN$. 

The map $\restrict{\NN} \colon V(\CC,\NN) \to \StabL{\NN}$ is $G$-equivariant. Since $\Stab{\NN} \subset \StabL{\NN}$ consists of a single $G$-orbit it follows that $\restrict{\NN}$ is surjective and that
\[
V(\CC,\NN) \cap \Stab{\CC} = \restrict{\NN}^{-1}(\Stab{\NN}) \cong \Stab{\NN} \times \restrict{\NN}^{-1}(\sigma)
\]
for any $\sigma\in \Stab{\NN}$.

For some choices of stability condition in $\Stab{\CC/\NN}$ the subset of possible massless phases is empty, \ie there is no lax stability condition having the chosen massive part -- see \cref{ex:lax supported versus weak}. For others it can be a proper subset of the reals --- see \eg \cref{subsec:nilrep} --- and for yet others it can be the entire real line -- see \eg \cref{subsec:A2}.

\begin{corollary}
The union of strata of codimension less than one in $\StabL{\CC}$ is a manifold with boundary $\bigcup_{\rk(\Lambda_\NN)=1} \StabLN{\CC}{\NN}$ and interior $\StabLN{\CC}{0} = \Stab{\CC}$.
\end{corollary}

\begin{proof}
By the above $\StabL{\NN}$ is a manifold with boundary $\StabLN{\NN}{\NN}$. The local model then shows that $V(\CC,\NN)$ is a manifold with boundary $\StabLN{\CC}{\NN}$. The result follows.
\end{proof}

\section{The quotient stability space}
\label{sec:space qstab}

\noindent
As in the previous section, $\CC$ is a Hom-finite $\kk$-linear triangulated category with a fixed surjective homomorphism $v\colon K(\CC) \to \Lambda$ to a free lattice of finite rank. Define an equivalence relation on the points of $\StabL{\CC}$ by $\sigma \sim \tau$ if they have the same massless subcategory, $\NN_\sigma=\NN_\tau=\NN$ say, and $\massive{\NN}(\sigma)=\massive{\NN}(\tau)$ in $\Stab{\CC/\NN}$. In particular, equivalent lax stability conditions have the same charge. 

\begin{definition}
\label{def:StabQ}

The \defn{quotient stability space} $\StabQ{\CC} \coloneqq \StabL{\CC} / {\sim}$ equipped with the quotient topology. We denote the quotient map by $q$ so that there is a commuting diagram
\[ 
\begin{tikzcd}
\StabL{\CC} \ar{rr}{q} \ar{dr}[swap]{\cm}&& \StabQ{\CC} \ar{dl}{\cm} \\
& \Hom{\Lambda}{\C}
\end{tikzcd} \]
where we use the same notation for the charge map on the quotient. 
\end{definition}

The theme of this section is that $\StabQ{\CC}$ is a stratified space, with $\Stab{\CC}$ as its unique open stratum. The first step is to specify a decomposition. Let $\StabQN{\CC}{\NN} \coloneqq \StabLN{\CC}{\NN}/{\sim}$ be the subspace with massless subcategory $\NN$. Evidently, as a set,
\[
\StabQ{\CC} = \bigsqcup_{\NN \in \Thick{\CC}}\StabQN{\CC}{\NN} 
\]
is the disjoint union of these subsets, which we refer to as its \defn{strata}. We do not assume the strata are connected. The maximal and minimal dimensional strata are easy to identify. If the massless subcategory $\NN=0$ then $\massive{\NN}(\sigma)=\sigma$ so there are homeomorphisms 
\[
\Stab{\CC} \cong \StabLN{\CC}{0} \cong \StabQN{\CC}{0} .
\]
Thus the space of stability conditions embeds continuously in $\StabQ{\CC}$ as the unique open, dense stratum. At the other extreme if the massless subcategory $\NN=\CC$ then $\massive{\NN}(\sigma)$ is the unique stability condition in $\Stab{0}$. Hence $\StabQ{\CC}$ has a unique $0$-dimensional stratum. 

\subsection{Local structure of the quotient stability space}

We begin by describing how masses and phases vary in $\StabQ{\CC}$. For $q(\sigma)\in \StabQN{\CC}{\NN}$ and $c\in \CC$ we define
\[ m_{q(\sigma)}(c) \coloneqq m_\sigma(c), \qquad \phi^-_{q(\sigma)}(c) \coloneqq \phi^-_{\massive{\NN}(\sigma)}(c), \qquad \phi^+_{q(\sigma)}(c) \coloneqq \phi^+_{\massive{\NN}(\sigma)}(c) \]
to be the mass and the minimal/maximal phases of the semistable factors of $c$ in $\CC/\NN$; the latter two assume that $c \in \CC$ is $\sigma$-massive.
Note that $m_{q(\sigma)}(c)$ is the mass $m_{\massive{\NN}(\sigma)}(c)$ of $c$ in the corresponding stability condition on $\CC/\NN$. In concrete terms \smash{$\phi^-_{q(\sigma)}(c)$} is the minimal phase of a massive factor of the HN filtration of $c \in \CC$ with respect to $\sigma$, and analogously for $\phi^+_{q(\sigma)}(c)$.

\begin{lemma}
For each $c\in \CC$ the mass $m_\bullet(c) \colon \StabQ{\CC} \to \R_{\geq 0}$ is continuous. The phases $\phi_\bullet^\pm(c) \colon m_\bullet(c)^{-1}(\R_{>0}) \to \R$ are well defined on the open subset where $c$ is massive and $\phi_\bullet^-(c)$ and $\phi_\bullet^+(c)$ are respectively upper and lower semi-continuous.
\end{lemma}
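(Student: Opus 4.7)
The plan is to prove both statements by first establishing them at the level of $\lstab{\CC}$ and then descending to $\qstab{\CC}$ via the universal property of the quotient topology. For the mass, well-definedness on $\qstab{\CC}$ is immediate from Proposition~\ref{local constancy 1}, which says that $\sigma \mapsto m_\sigma(c)$ is locally constant on the fibres of $\cm$; since equivalence classes are by definition connected components of such fibres, $m_\bullet(c)$ is constant on each class. Continuity on $\qstab{\CC}$ then follows from continuity of $m_\bullet(c)$ on $\lstab{\CC}$ (Proposition~\ref{mass and phase continuity}) and the universal property of the quotient topology. For $\tilde{\phi}^\pm_\bullet(c)$, well-definedness follows the same way: by Lemma~\ref{HN base case} the phases of the HN factors of $c$ in $\CC/\NN_\sigma$ are precisely the phases of the $\sigma$-massive semistable factors of $c$ in $\CC$, and the latter set is also locally constant on fibres of $\cm$ by Proposition~\ref{local constancy 1}. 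In particular its maximum and minimum are constant on equivalence classes.

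For semi-continuity it suffices to prove the corresponding statements on $\lstab{\CC}$, because lower (respectively upper) semi-continuity on $\qstab{\CC}$ is equivalent to the pre-image of each set $\{\tilde{\phi}^+ > \alpha\}$ (respectively $\{\tilde{\phi}^- < \alpha\}$) being open in $\lstab{\CC}$. Fix $\sigma = (P,Z) \in \lstab{\CC}$ with massless subcategory $\NN$ and $m_\sigma(c) > 0$, and let $c_1,\ldots,c_k$ be the $\sigma$-semistable factors of $c$ with phases $\phi_1 > \cdots > \phi_k$. For $\tau = (Q,W) \in B_\epsilon(\sigma)$ with $\epsilon$ sufficiently small, the $\tau$-HN filtration of $c$ is the concatenation of the $\tau$-HN filtrations of the $c_i$, each of which lies in $Q(\phi_i-\epsilon,\phi_i+\epsilon)$. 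Now Lemma~\ref{thick subcats in nbhd} gives $\NN_\tau \subseteq \NN$, so if $c_i$ is $\sigma$-massive (\ie $c_i \notin \NN$) then at least one $\tau$-HN factor of $c_i$ is $\tau$-massive; otherwise all such factors would lie in $\NN_\tau \subseteq \NN$, forcing $c_i \in \NN$, a contradiction. Applying this to the $\sigma$-massive factors of $c$ of extremal phase yields
\[
\tilde{\phi}^+_\tau(c) \geq \tilde{\phi}^+_\sigma(c) - \epsilon \qquad\text{and}\qquad \tilde{\phi}^-_\tau(c) \leq \tilde{\phi}^-_\sigma(c) + \epsilon,
\]
which gives the lower semi-continuity of $\tilde{\phi}^+$ and the upper semi-continuity of $\tilde{\phi}^-$ on $\lstab{\CC}$, and hence on $\qstab{\CC}$.

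The main obstacle is almost entirely bookkeeping rather than analysis: one must carefully distinguish $\tilde{\phi}^\pm$ (phases of the massive HN factors of $c$ in $\CC$, equivalently of its HN factors in $\CC/\NN$) from the full phase functions $\phi^\pm$ on $\lstab{\CC}$ considered in Proposition~\ref{mass and phase continuity}, and to invoke Lemma~\ref{HN base case} and Lemma~\ref{thick subcats in nbhd} at the right points so that nothing is lost when $\sigma$-massless factors of $c$ potentially acquire a positive mass under perturbation. Equality of $\tilde{\phi}^\pm_\tau(c)$ with $\tilde{\phi}^\pm_\sigma(c)$ is not to be expected even within a fibre component, which is precisely why only semi-continuity, rather than continuity, holds.
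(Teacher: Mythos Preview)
Your argument is correct and is essentially a fleshed-out version of the paper's very terse proof: both rely on the fact that a $\sigma$-massive HN factor of $c$ retains at least one $\tau$-massive $\tau$-HN subfactor of nearby phase for $\tau$ close to $\sigma$, which is exactly what gives the one-sided bounds $\tilde{\phi}^+_\tau(c) \geq \tilde{\phi}^+_\sigma(c) - \epsilon$ and $\tilde{\phi}^-_\tau(c) \leq \tilde{\phi}^-_\sigma(c) + \epsilon$.

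Two small corrections. First, the fact that the HN factors of $c$ in $\CC/\NN_\sigma$ are precisely the $\sigma$-massive HN factors of $c$ in $\CC$ is established in Proposition~\ref{uniqueness of compatibility} (the quotient HN filtration is obtained by simply discarding factors in $\NN$), not Lemma~\ref{HN base case}. Second, your closing remark that equality of $\tilde{\phi}^\pm_\tau(c)$ and $\tilde{\phi}^\pm_\sigma(c)$ ``is not to be expected even within a fibre component'' contradicts what you just proved: Proposition~\ref{local constancy 1}, which you invoked for well-definedness, says precisely that the set of phases of massive factors is constant on fibre components, so $\tilde{\phi}^\pm$ agree there. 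The genuine failure of continuity occurs only when moving \emph{off} the fibre, where a previously $\sigma$-massless factor may acquire mass with phase lying outside $[\tilde{\phi}^-_\sigma(c),\tilde{\phi}^+_\sigma(c)]$.
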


\begin{proof}
The mass of $c$ is continuous on $\StabL{\CC}$ and constant on equivalence classes by definition. Therefore masses are continuous on $\StabQ{\CC}$ too.
We show that $\phi_\bullet^-(c)$ is upper semi-continuous; the proof that $\phi_\bullet^+(c)$ is lower semi-continuous is similar.
Let $\epsilon > 0$ and choose $\delta > 0$ such that $\delta$ is smaller than the minimum of $\epsilon$ and half the smallest phase gap between the $\sigma$-semistable factors of $c$. 
Let $c_0$ be the massive $\sigma$-semistable factor of $c$ of smallest phase, $\phi_{q(\sigma)}^-(c)$. 
Then $c_0$ is $\tau$-massive for $\tau \in B_\epsilon^\delta(\sigma)$ by \cref{lem:thick subcats in nbhd} and therefore $c$ has a massive $\tau$-semistable factor of phase at most $\phi_{q(\sigma)}^-(c) + \delta \leq \phi_{q(\sigma)}^-(c) + \epsilon$. In particular, $\phi_{q(\tau)}^-(c) \leq \phi_{q(\sigma)}^-(c) + \epsilon$.
\end{proof}

Second, we describe the equivalence relation locally, in a neighbourhood of $\StabLN{\CC}{\NN}$.

\begin{lemma}
\label{lem:local equivalence}
For $\sigma,\tau \in V(\CC,\NN) \cap \StabLN{\CC}{\MM}$ the following are equivalent:
\begin{enumerate}
\item $\massive{\MM}(\sigma)= \massive{\MM}(\tau)$,
\item $\massive{\MM}\circ \restrict{\NN}(\sigma) = \massive{\MM} \circ \restrict{\NN}(\tau)$ and $\massive{\MM}\circ \Phi_\NN(\sigma) = \massive{\MM}\circ \Phi_\NN(\tau)$,
\item $\massive{\MM}\circ \restrict{\NN}(\sigma) = \massive{\MM}\circ \restrict{\NN}(\tau)$ and $\massive{\NN}\circ \Phi_\NN(\sigma) = \massive{\NN}\circ \Phi_\NN(\tau)$.
\end{enumerate}
\end{lemma}
\begin{proof}
Suppose $\sigma \in V(\CC,\NN)$. Then $\sigma = \deform{\NN}(\Phi_\NN(\sigma),\restrict{\NN}(\sigma))$. Moreover, if $\sigma$ has massless subcategory $\MM$ then it follows from the proof of \cref{prop:deformation off stratum} that
\begin{equation} \label{eq: M deformation}
\massive{\MM}(\sigma) = \massive{\MM}\circ \deform{\NN}(\Phi_\NN(\sigma),\restrict{\NN}(\sigma)) = \deform{\NN/\MM}(\massive{\MM}\circ \Phi_\NN(\sigma),\massive{\MM}\circ \restrict{\NN}(\sigma))
\end{equation}
is constructed by deforming $\massive{\MM}\circ \Phi_\NN(\sigma)$ in the direction of $\massive{\MM}\circ \restrict{\NN}(\sigma)$. The equivalence of the first two items follows from the injectivity of $\deform{\NN/\MM}$.

The second statement implies the third since $\Phi_\NN(\sigma) \in \StabLN{\CC}{\NN} \cap \overline{ \StabLN{\CC}{\MM} }$ whenever $\sigma\in V(\CC,\NN)\cap \StabLN{\CC}{\MM}$ and
\[
\begin{tikzcd}
\StabLN{\CC}{\NN} \cap \overline{ \StabLN{\CC}{\MM} } \ar{d}[swap]{\massive{\MM}} 
 \ar{dr}{\massive{\NN}} & \\
\StabLN{\CC/\MM}{\NN/\MM} \ar{r}[swap]{\massive{\NN/\MM}}
& \Stab{\CC/\NN}
\end{tikzcd}
\]
commutes. Conversely, the third implies the second because if $\massive{\NN}\circ \Phi_\NN(\sigma) = \massive{\NN}\circ \Phi_\NN(\tau)$ then $\massive{\MM}\circ \Phi_\NN(\sigma)$ and $\massive{\MM}\circ \Phi_\NN(\tau)$ have the same charge, and the slicing of $\massive{\MM}\circ \Phi_\NN(\sigma)$ can be uniquely reconstructed from those of $\massive{\NN}\circ \Phi_\NN(\sigma)$ and $\massive{\MM}\circ \restrict{\NN}(\sigma)$, and analogously with $\tau$ in place of $\sigma$.
\end{proof}

For brevity let $W(\CC,\NN) \coloneqq V(\CC,\NN)/{\sim}$ be the image of the open neighbourhood $V(\CC,\NN)$ of $\StabLN{\CC}{\NN}$ in the quotient. 

\begin{remark} \label{rem:quotient-decoupling}
In general we do not know if $W(\CC,\NN)$ is open in $\StabQ{\CC}$, \ie if
 \[
q^{-1}(W(\CC,\NN)) = \{\tau \in \StabL{\CC} \mid \tau \sim \sigma \in V(\CC,\NN)\}
\]
is open in $\StabL{\CC}$. 
Moreover, it is unclear if $V(\CC,\NN)$ is a union of equivalence classes, and therefore if the local product structure of \cref{thm:StabL structure summary}(4) descends to $\StabQ{\CC}$. 
However, it is a stratum-wise local model for $\StabQ{\CC}$ near $\StabQN{\CC}{\NN}$ in the sense that it intersects each stratum whose closure intersects $\StabQN{\CC}{\NN}$ in a non-empty open subset --- see item (\ref{stratum-wise local model}) below.
\end{remark}

\begin{theorem}
\label{thm:stabq structure}
The quotient stability space $\StabQ{\CC}$ is a topological space with a decomposition into (possibly empty) strata $\StabQN{\CC}{\NN}$ indexed by thick subcategories $\NN \subseteq \CC$:
\[
\StabQ{\CC} = \bigsqcup_{\NN \in \Thick{\CC}} \StabQN{\CC}{\NN} .
\]
There is a continuous map $\cm \colon \StabQ{\CC}\to \Hom{\Lambda}{\C}$ sending $\StabQN{\CC}{\NN}$ to $\Hom{\Lambda/\Lambda_\NN}{\C}$. This decomposition is a stratified space in the following sense:
\begin{enumerate}
\item The stratum $\StabQN{\CC}{0} \cong \Stab{\CC}$ is open and dense.
\item The strata are locally closed.
\item 
\label{discrete fibres}
The fibres of $\cm \colon \StabQ{\CC} \to \Hom{\Lambda}{\C}$ are discrete.
\item 
There is a commutative diagram of local homeomorphisms:
\begin{equation}
\label{eqn:stabq strata}
\begin{tikzcd}
   \StabQN{\CC}{\NN}  \ar[hookrightarrow]{rr}{\massive{\NN}} \ar{dr}[swap]{\cm}&&  \Stab{\CC/\NN} \ar{dl}{\cm}\\
 &  \Hom{\Lambda/\Lambda_\NN}{\C}.
\end{tikzcd}
\end{equation}
The image of $\massive{\NN}$ is open and $\StabQN{\CC}{\NN}$ is a complex manifold of dimension $\rk(\Lambda/\Lambda_\NN)$.
\item 
\label{frontier condition}
The decomposition satisfies the frontier condition: for any thick subcategory $\MM\subseteq \CC$,
\[
\overline{\StabQN{\CC}{\MM}} = \bigcup_{\StabQN{\NN}{\MM} \neq \emptyset} \StabQN{\CC}{\NN} .
\]
\item \label{bded height} The poset of strata, ordered by inclusion of their closures, has height bounded by $\rk(\Lambda)$.
\item 
\label{stratum-wise local model}
The intersection $W(\CC,\NN) \cap \StabQN{\CC}{\MM}$ is open in $\StabQN{\CC}{\MM}$ for thick $\MM\subset \CC$, and 
\[
\StabQN{\CC}{\NN} \subset \overline{\StabQN{\CC}{\MM}} \iff W(\CC,\NN) \cap \StabQN{\CC}{\MM} \neq \emptyset.
\]
\item There is an almost-stratum-preserving deformation retraction
\[
  W(\CC,\NN)\times [0,1] \to W(\CC,\NN), \quad (q(\sigma),t) \mapsto q(\Phi_{\NN,t}(\sigma))
\]
of $W(\CC,\NN)$ onto the stratum $\StabQN{\CC}{\NN}$. In particular $W(\CC,\CC) \cong \StabQ{\CC}$ deformation retracts onto $\StabQN{\CC}{\CC} \cong \{*\}$ so that $\StabQ{\CC}$ is contractible.
\item The right action of the universal cover $G$ of $\GL$ on $\Stab{\CC}$ extends continuously to an action on $\StabQ{\CC}$ preserving the strata. This action is equivariant with respect to the open embedding $\StabQN{\CC}{\NN} \inj \Stab{\CC/\NN}$. The subgroup $\C\subset G$ acts freely except on the deepest stratum $\StabQN{\CC}{\CC}$, which it fixes.
\item The left action of $\Aaut{\Lambda}{\CC}$ on $\Stab{\CC}$ extends continuously to $\StabQ{\CC}$ with an auto-equivalence $\alpha\in \Aaut{\Lambda}{\CC}$ mapping $\StabQN{\CC}{\NN}$ to $\StabQN{\CC}{\alpha(\NN)}$.
\end{enumerate}
\end{theorem}

\begin{proof}

\begin{enumerate}[wide, labelwidth=!, labelindent=1em]
\item 
We observed above that $\StabQN{\CC}{0} \cong \Stab{\CC}$ is the unique open stratum of $\StabQ{\CC}$. It is dense because $\Stab{\CC}$ is dense in $\StabL{\CC}$, or by part \eqref{frontier condition}.
\item 
The strata are locally closed because the continuous map $\StabL{\CC} \to \Thick{\CC}$ assigning the massless subcategory to a lax stability condition factors continuously through the quotient map $\StabL{\CC} \to \StabQ{\CC}$. 
\item 
To show that $\cm \colon \StabQ{\CC} \to \Hom{\Lambda}{\C}$ has discrete fibres we show that the equivalence relation identifies lax stability conditions in the same connected component of the fibre of $\StabL{\CC}\to \Hom{\Lambda}{\C}$. Suppose $\sigma=(P,Z)$ is a $\delta$-lax stability condition with massless subcategory $\NN$. Let $\tau=(Q,Z) \in B_\epsilon^\delta(\sigma)$ for sufficiently small $0 < \epsilon<1$. Then $\tau$ has massless subcategory $\NN_\tau \subseteq \NN$ and since $\restrict{\NN}(\tau) = (Q_\NN,0)$, because $Z|_{\Lambda_\NN} = 0$, in fact $\NN_\tau=\NN$. Moreover $\massive{\NN}(\sigma), \massive{\NN}(\tau) \in \Stab{\CC/\NN}$ have the same charge and $d(P_{\CC/\NN}, Q_{\CC/\NN})\leq d(P,Q)< \epsilon <1$, so $\massive{\NN}(\sigma)=\massive{\NN}(\tau)$ by \cite[Lem.~6.4]{Bridgeland07}. Hence $\sigma\sim\tau$.
\item
By \cref{thm:StabL structure}, $q(\sigma) \mapsto \massive{\NN}(\sigma)$ is a homeomorphism from $\StabQN{\CC}{\NN}$ onto an open subset of $\Stab{\CC/\NN}$. In particular $\StabQN{\CC}{\NN}$ can be given the structure of a complex manifold of dimension $\rk(\Lambda/\Lambda_\NN)$. This homeomorphism fits into the diagram (\ref{eqn:stabq strata}). Since the right hand map is a local homeomorphism, so is the left hand. 
\item 
From $\StabQN{\NN}{\MM} \neq \emptyset \iff \StabLN{\NN}{\MM} \neq \emptyset$ we get
\[
q^{-1}\bigg( \bigcup_{\StabQN{\NN}{\MM} \neq \emptyset} \StabQN{\CC}{\NN} \bigg) = \bigcup_{\StabLN{\NN}{\MM} \neq \emptyset} \StabLN{\CC}{\NN}.
\]
This is closed by \cref{lem:closed union of strata}. Indeed it is the minimal closed union of equivalence classes containing $\StabLN{\CC}{\MM}$ because it follows from \cref{thm:StabL structure summary} (\ref{local model V}) that 
\[
\StabLN{\CC}{\NN} = q^{-1}(q(\StabLN{\CC}{\NN} \cap \overline{\StabLN{\CC}{\MM}})).
\]
 Thus,
 $\overline{\StabQN{\CC}{\MM}} = \bigcup_{\StabQN{\NN}{\MM} \neq \emptyset} \StabQN{\CC}{\NN}$ as claimed. In particular, the decomposition satisfies the frontier condition because the closure of any stratum is a union of strata.
\item
Suppose $\StabQN{\CC}{\NN'} \subset \overline{\StabQN{\CC}{\NN}}$. Then as $\cm \colon \StabQ{\CC} \to \Hom{\Lambda}{\C}$ has discrete fibres and restricts to a local homeomorphism $\StabQN{\CC}{\NN} \to \Hom{\Lambda/\Lambda_\NN}{\C}$ we conclude that $\Hom{\Lambda/\Lambda_{\NN'}}{\C} \subset \Hom{\Lambda/\Lambda_{\NN}}{\C}$, and therefore that $\rk(\Lambda/\Lambda_{\NN'}) < \rk(\Lambda/\Lambda_{\NN})$. The result follows.
\item
For each thick $\MM\subset \CC$ there is a commuting diagram
\[
\begin{tikzcd}
\StabLN{\CC}{\MM} \ar{r}{q} \ar{d}[swap]{\massive{\MM}} & \StabQN{\CC}{\MM} \ar{d}{\cm}\\
\Stab{\CC/\MM} \ar{r}[swap]{\cm} & \Hom{\Lambda/\Lambda_{\MM}}{\C} 
\end{tikzcd}
\]
in which the left hand map is open by \cref{thm:StabL structure}, the bottom map is a local homeomorphism by \cref{thm:deformation theorem}, and the right hand map is a local homeomorphism by (\ref{discrete fibres}). Since $V(\CC,\NN)$ is open it follows that 
\[
W(\CC,\NN) \cap \StabQN{\CC}{\MM} = q\left( V(\CC,\NN) \cap \StabLN{\CC}{\MM} \right)
\]
is also open. Moreover, by (\ref{frontier condition}) and \Cref{thm:StabL structure summary} 
\begin{align*}
\StabQN{\CC}{\NN} \subset \overline{\StabQN{\CC}{\MM} } 
& \iff \StabQN{\NN}{\MM} \neq \emptyset\\
& \iff \StabLN{\NN}{\MM} \neq \emptyset\\
& \iff V(\CC,\NN) \cap \StabLN{\CC}{\MM} \neq \emptyset\\
&\iff W(\CC,\NN) \cap \StabQN{\CC}{\MM} \neq \emptyset.
\end{align*}
\item 
\Cref{prop:deformation retract 1} implies that $q(\sigma) \mapsto q(\Phi_{\NN,t}(\sigma))$ is a deformation retraction of $W(\CC,\NN)$ onto $\StabQN{\CC}{\NN}$ provided that this map is well defined. To show it is, suppose $\sigma,\tau \in V(\CC,\NN)$ with $\sigma \sim \tau$. Let $\MM$ be the common massless subcategory. Then the third clause of \cref{lem:local equivalence} yields $\Phi_\NN(\sigma)\sim \Phi_\NN(\tau)$. Moreover, $\Phi_{\NN,t}(\sigma)$ and $\Phi_{\NN,t}(\tau)$ have common massless subcategory $\MM$ for $0<t\leq 1$. Since
\[
\massive{\MM}\circ \Phi_{\NN,t}(\sigma) = \massive{\MM} \circ \deform{\NN}(\Phi_\NN(\sigma) , t \cdot \restrict{\NN}(\sigma)) = \deform{\NN/\MM}( \massive{\MM}\circ \Phi_\NN(\sigma) , \massive{\MM}\circ t \cdot \restrict{\NN}(\sigma)),
\] 
where the second equality is given by equation \eqref{eq: M deformation} in the proof of \cref{lem:local equivalence}. Proceeding analogously for $\tau$, we see that the second clause of \cref{lem:local equivalence} implies $\Phi_{\NN,t}(\sigma) \sim \Phi_{\NN,t}(\tau)$. Hence, the map is well defined. 

That $W(\CC,\CC)=\StabQ{\CC}$ follows from \cref{rmk:V examples}, and $\StabQN{\CC}{\CC}=\{*\}$ is the unique class of lax stability conditions with zero charge. The resulting deformation retraction of $\StabQ{\CC}$ to a point is nothing but uniform mass dilation.
\item
 The action of $G$ on $\StabL{\CC}$ descends to an action on $\StabQ{\CC}$ because it preserves the strata of $\StabL{\CC}$ and is equivariant with respect to $\massive{\NN} \colon \StabLN{\CC}{\NN} \to \Stab{\CC/\NN}$ by \cref{lem:actions on dstab}. It follows that it is equivariant with respect to the open embedding $\StabQN{\CC}{\NN} \inj \Stab{\CC/\NN}$. Since $\C$ acts freely on $\Stab{\CC/\NN}$ when $\rk(\Lambda/\Lambda_\NN)>0$ we conclude it acts freely on $\StabQ{\CC} \setminus \StabQN{\CC}{\CC}$.
\item
Similarly, the action of $\Aaut{\Lambda}{\CC}$ descends from $\StabL{\CC}$ to $\StabQ{\CC}$ by \cref{lem:actions on dstab} because $\massive{\NN}$ is equivariant. 
\qedhere
\end{enumerate}
\end{proof}

\begin{corollary}
\label{cor:lax poset bded}
The poset of strata in $\StabL{\CC}$, see \cref{lem:lax partial order}, has height bounded by $\rk(\Lambda)$. 
\end{corollary}
\begin{proof}
This follows immediately from (\ref{bded height}) above because $\StabLN{\CC}{\NN} \mapsto \StabQN{\CC}{\NN}$ is an isomorphism from the poset of strata of $\StabL{\CC}$ to that of $\StabQ{\CC}$.
\end{proof}

\subsection{Codimension one strata}
\label{sec:codim1}

Suppose, as in \cref{subsec:codim one lax} that $\NN$ is a thick subcategory of $\CC$ for which the saturation $\Lambda_\NN$ of the image of $K(\NN) \to K(\CC) \to \Lambda$ is a rank one lattice with generator $\lambda \in \Lambda_\NN$. Recall that $\StabLN{\CC}{\NN} \neq \emptyset \iff \Stab{\NN}\neq \emptyset \iff \NN$ has an algebraic heart, $\HH$ say. When this is the case, $\StabL{\NN}$ is the universal cover of the real oriented blowup of $\Hom{\Lambda_\NN}{\C}$ at the origin. The charge map is the composite of the covering map and blowdown, and the slicing is determined by $\HH=P(\phi)$ where $Z(\HH)\subset \R_{\geq 0}e^{\pi i \phi}$.

By definition $\StabQ{\NN} = \StabL{\NN}/\sim$ is the quotient by the equivalence relation identifying points in the cover of the exceptional divisor. More explicitly, we have a commutative diagram
\begin{center}
\begin{tikzcd}
\StabQ{\NN} \ar{d} \ar[hookleftarrow]{r}& 
\Stab{\NN}\cup \Stab{0} \ar{d} \ar{rr} &&
\C \cup \{-\infty\} \ar{d}{\exp} \\
\Hom{\Lambda_\NN}{\C} \ar[hookleftarrow]{r}& 
\Hom{\Lambda_\NN}{\C}^* \cup \Hom{0}{\C} \ar{rr}{Z \mapsto Z(\lambda)} && 
\C^*\cup\{0\}
\end{tikzcd}
\end{center}
in which the vertical arrows are the charge projections and the horizontal arrows are homeomorphisms. A neighbourhood $U$ of $-\infty$ is open precisely if $U\cap \C$ is open and for each $y\in \R$ there is some $a\in \R$ such that $\{x+iy \mid x<a\} \subset U$.

The subset $W(\CC,\NN)$ is an open neighbourhood of the stratum $\StabQ{\CC,\NN}$ because
\[
q^{-1}(W(\CC,\NN)) = \StabL{\CC,\NN} \cup ( \Stab{\CC} \cap V(\CC,\NN)) = V(\CC,\NN)
\]
is open. The stratum $\StabQ{\CC,\NN}$ corresponds to the open subset of $\Stab{\CC/\NN}$ consisting of stability conditions which lift to $\StabL{\CC,\NN}$. The fibre of $q$ is the set of such lifts; it is homeomorphic to the open subset of compatible massless phases of objects in the heart $\HH$. Open neighbourhoods of such a point $\sigma$ correspond under $q$ to open neighbourhoods of the fibre $q^{-1}(\sigma)$ in $\StabL{\CC}$. It follows that $q\colon \StabL{\CC} \to \StabQ{\CC}$ is not an open map when $\StabL{\CC} \neq \Stab{\CC}$. To see this note that each point of $q^{-1}(\sigma)$ has an open neighbourhood $U$ not containing the entire fibre $q^{-1}(\sigma)$, and therefore such that $q(U)$ is not open.

\begin{remark}
\label{rmk:W(C,N) embedding}
Recall from \cref{thm:StabL structure summary} that $V(\CC,\NN) \hookrightarrow \Stab{\CC/\NN}\times \StabL{\NN}$ is a homeomorphism onto its image, which is an open subset. This descends to a continuous embedding 
\[
W(\CC,\NN) \hookrightarrow \Stab{\CC/\NN}\times \StabQ{\NN}
\]
between the quotients by \cref{lem:local equivalence}. However, even in the codimension one setting this needn't be a homeomorphism onto its image, and the image need not be open. If there is $\sigma \in \StabQ{\CC,\NN}$ for which $q^{-1}(\sigma)$ is (identified with) a proper non-empty subset of $\R$ then the topology on $W(\CC,\NN)$ is finer and the image of $W(\CC,\NN)$ itself is not open. See \cref{subsec:nilrep} for a concrete example in which this occurs. 
\end{remark}

\section{Representation finite examples}
\label{sec:finite type components}

\noindent
Throughout this section, we impose the following finiteness conditions on $\CC$:
\begin{enumerate}
\item $K(\CC) \cong \Z^n$ and $v=\id \colon K(\CC) \to \Lambda$.
\item The hearts $P(\phi,\phi+1]$ have finitely many indecomposable objects for all $(P,Z) \in \Stab{\CC}$ and $\phi\in \R$.
\item The set of such hearts is closed under Happel--Reiten--Smalø tilts.
\end{enumerate}
This is a very strong assumption. An immediate consequence is that the heart $P(\phi,\phi+1]$ has finitely many stable objects and therefore the phase distribution $\{ \phi \in \R \mid P(\phi)\neq 0\}$ is finite. This implies that $P(\phi,\phi+1]=P(\phi+\epsilon,\phi+1]$ for some $\epsilon>0$ and so is algebraic (see, \eg \cite{QW18}), \ie a length category with finitely many simple objects.

The assumption holds for the bounded derived categories of path and Ginzburg algebras of ADE Dynkin quivers, as well as for silting-discrete algebras such as derived-discrete algebras; see \cite{Keller12, Aihara13, Vossieck01}, respectively, for the definitions, and \cite{QW18, PSZ18, BPP17}, respectively, for verification of the assumption in each case.

For simplicity of notation we assume that $\Stab{\CC}$ is connected; otherwise consider a component. Our first result is that in this section's setup, the lax closure is just the closure of the stability manifold. This is not true in general; see \cref{subsec:nilrep}.

\begin{proposition}
\label{prop:boundary support}
The lax closure $\StabL{\CC} = \overline{\Stab{\CC}}$, where the closure is taken in the product $\Slice{\CC} \times \Hom{K(\CC)}{\C}$. 
\end{proposition}

\begin{proof}
Suppose $\sigma = (P,Z) \in \overline{\Stab{\CC}}$ does not satisfy lax support. Then for any $\delta>0$ we can find a sequence $(c_n)$ of massive indecomposable $\delta$-slim objects with $m_\sigma(c_n) / \norm{c_n} \to 0$. As the phase distribution of $\sigma$ is finite, we can choose $\delta>0$ sufficiently small that all $\delta$-slim objects are semistable. In particular, all $c_n$ are semistable. Shifting if necessary, we may assume $c_n \in P(\phi_n)$ for some $\phi_n \in (0,1]$. As there are only finitely many non-zero slices in $P(0,1]$, we can pass to a subsequence of massive indecomposable objects in $P(\phi)$ for some $\phi \in (0,1]$. By assumption there are only finitely many of these, so we can pass to a constant subsequence $(c_n)$. This contradicts the assumption that $c_n$ is massive, so $\sigma$ satisfies lax support after all.
\end{proof}

\begin{proposition}
\label{cor:boundary algebraic}
Let $(P,Z) \in \StabL{\CC}$. Then for any $\phi \in \R$ the full subcategory $P(\phi,\phi+1]$ is the heart of a stability condition in $\Stab{\CC}$. Moreover, $P$ has finite phase distribution.
\end{proposition}

\begin{proof}
Rotating phases by the $\C$ action it suffices to prove the first part for $\phi=0$. Choose $(Q,W)$ in $\Stab{\CC}$ with $d(P,Q) < \tfrac{1}{2}$ so that the hearts $P(0,1]$ and $Q(0,1]$ are both contained in $Q(-\tfrac{1}{2},\tfrac{3}{2}] = Q(\tfrac{1}{2},\tfrac{3}{2}] * Q(-\tfrac{1}{2},\tfrac{1}{2}]$. Then $P(0,1]$ is obtained by first tilting from $Q(0,1]$ to $Q(-\tfrac{1}{2},\tfrac{1}{2}]$ and then tilting from that to $P(0,1]$. Since the set of hearts of stability conditions in the component $\Stab{\CC}$ is closed under tilting, $P(0,1]$ is the heart of some stability condition in $\Stab{\CC}$. Therefore $P(0,1]$ is algebraic. A similar argument shows that $P[0,1)$ is algebraic.

Since $P(\phi,\phi+1]$ and $P[\phi-1,\phi)$ are algebraic there exists $\epsilon>0$, depending on $\phi$, for which $P(\phi, \phi+\epsilon) = 0 = P(\phi-\epsilon,\phi)$. The same holds for any $\phi\in \R$ because we can translate phases using the $\C$ action. Therefore $P$ has finite phase distribution.
\end{proof}

\begin{corollary}
\label{cor:finite type massless}
For a thick subcategory $\NN \subseteq \CC$, the following conditions are equivalent:
\begin{enumerate}[label=(\roman*)]
\item $\NN$ is the massless subcategory of some $\sigma \in \StabL{\CC}$;
\item $\NN$ is the triangulated hull of a finite set of simple objects in the heart of some $\tau \in \Stab{\CC}$.
\end{enumerate}
\end{corollary}

\begin{proof}
Suppose $\NN$ is generated by a finite subset $I$ of simple objects in the heart $Q(0,1]$ of some $\tau = (Q,W)\in \Stab{\CC}$. Deforming the charge, we may assume $W(s)=-1$ if $s\notin I$ and $W(s)=-r$ if $s\in I$, for some $r\in \R_{>0}$. Clearly the charges converge as $r\to 0$. Moreover, since $Q(0,1]= Q(1)$ for all $r$ the slicings also converge as $r\to 0$. In the limit as $r\to 0$ we obtain a lax pre-stability condition $\sigma$ with massless subcategory $\NN$ and slicing $Q$. Since $\sigma\in \overline{\Stab{\CC}}$ it is in $\StabL{\CC}$ by \cref{prop:boundary support}.
 
Now suppose $\sigma = (P,Z) \in \StabL{\CC}$. By \cref{lem:massless simples} the massless subcategory $\NN$ is the triangulated closure of the set of stable massless objects in $P(0,1]$. Since $P(0,1]$ is algebraic each stable massless object has a composition series whose factors are massless simple objects, each of which is of course also stable. Therefore $\NN=\triang{\CC}{S}$ is the triangulated hull of the set $S$ of massless simple objects in $P(0,1]$. The result follows because $P(0,1]$ is the heart of some stability condition in $\Stab{\CC}$ by \cref{cor:boundary algebraic}.
\end{proof}

\begin{remark}
\label{rmk:non-admissible}
\Cref{cor:finite type massless} tells us that massless subcategories need not be admissible, \ie the inclusion functor $\NN \hookrightarrow \CC$ need not admit a left or right adjoint. An example is the derived-discrete algebra $A = \Lambda(2,1,0)$; see \cite{BGS04} for notation and \cite[\S 8]{KY14} for a detailed description of $\Db(A)$. One of the two simple modules $S$ is a $2$-spherical object. Therefore, its thick hull $\NN \coloneqq \thick{}{S}$ occurs as a massless subcategory. On the other hand, the duality property $\Hom{S}{-} = \Hom{-}{S[2]}^*$ implies that an adjoint of the inclusion $\NN \inj \Db(A)$ would lead to a splitting $\Db(A) \cong \NN \oplus \MM$ but $\Db(A)$ is indecomposable.
\end{remark}

\section{Closures of $G$-orbits}
\label{sec:orbit closures}

\noindent
Recall from \cref{subsec:group actions} that the universal cover $G$ of $\GL$ acts on stability spaces and our partial compactifications. In this section we describe the closures of free $G$-orbits in $\Stab{\CC}$, in $\StabL{\CC}$ and in $\StabQ{\CC}$ by explicitly constructing strict and lax stability conditions in the boundary. This provides examples of lax and quotient stability conditions, and is also the key technical ingredient in describing two-dimensional stability spaces in the following section.

The phase diagrams $\Phi_\sigma = \{ \phi + \Z : P(\phi) \neq 0 \} \subset \R/\Z$ of stability conditions $\sigma = (P,Z)$ in the same orbit are related by orientation-preserving diffeomorphisms of the circle $\R/\Z$. The structure of the closure of the orbit in $\Stab{\CC}$, in $\StabL{\CC}^*$, and in $\StabQ{\CC}^*$ can be described in terms of the phase diagram. Here $\StabL{\CC}^*$ and $\StabQ{\CC}^*$ are the spaces obtained by removing the strata where all objects are massless from $\StabL{\CC}$ and $\StabQ{\CC}$ respectively. These are more convenient to consider because the action of $\C$ on them is free. We describe the closure of $(\sigma\cdot G)/\C$ in the respective quotients $\PStab{\CC}$, $\PStabL{\CC}$ and $\PStabQ{\CC}$.

Fix a stability condition $\sigma=(P,Z)$. If there is only one point in the phase diagram $\Phi_\sigma$ then there is a non-trivial stabiliser and $(\sigma\cdot G)/\C$ is a point. In this case the orbit is closed in $\PStab{\CC}$ and also in both partial compactifications. Henceforth, we assume that $\Phi_\sigma$ consists of at least two points. In particular the image of the charge is the whole of $\C$ so that the $G$-orbit through $\sigma$ is free and $(\sigma\cdot G)/\C$ is biholomorphic to the Poincaré disk $\pd$ because $G \cong \C \times \U$.
In the following, we take $\partial \pd = \overline{\pd} \setminus \pd$.

\begin{construction} \label{orbit identification}
We set up an explicit identification $\pd \isom (\sigma\cdot G)/\C$, $w \mapsto \C\cdot \sigma_w$, where the latter is the $\C$-orbit of $\sigma_w \coloneqq (P_w,Z_w)$ which is defined as follows. We fix an explicit identification of the Poincaré disk and the strict upper half-plane by choosing $f \colon \pd \to \U$, $f(w) = i(1+w)/(1-w)$,
which extends to a homeomorphism $f\colon \overline{\pd} \to \U\cup\R\cup\{\infty\}$.

Next, we use the map to define a family of real transformations of the complex plane
\[ 
M \colon \overline{\pd} \to \End{\R}{\C}, \quad w \mapsto
   \begin{cases}
     M_w(1)=1, \quad M_w(i) = f(w) & \text{if } w\neq 1; \\
     M_1(1)=0, \quad M_1(i) =-1
   \end{cases}
\]
and in turn define a one-parameter family of charges by $Z_w \coloneqq M_w \circ Z$.
If $w\in \pd$ then $M_w(i) \in \U$, hence there is a unique slicing $P_w$ that is compatible with $Z_w$ and satisfies $P_w(0,1]= P(0,1]$.
This gives $\sigma_w = (P_w, Z_w)$.

The reason for this particular choice is that for $w\in\partial \pd=S^1$
\begin{align*}
      P(\phi) \subset \ker Z_w 
&\iff M_w(e^{i\pi \phi})=0\\
&\iff M_w(1) \cos(\pi\phi) + M_w(i) \sin(\pi\phi)=0\\
&\iff \left( w=1 \text{ and } \phi=0 \right)\ \text{or}\ \left( w=e^{2\pi i \phi} \text{ and } \phi\neq 0 \right)\\
&\iff w=e^{2\pi i \phi}
\end{align*}
so that points on $\partial \pd$ correspond to charges for which semistable objects of a certain phase have vanishing mass in a natural way.
For $w =e^{2\pi i \phi}\in \partial \pd$, $f(w) = - \cos(\pi \phi)/\sin(\pi \phi)$, so for $c \in P(\phi')$ the trigonometric angle sum formula yields
\begin{equation}
\label{eqn:angle sum charge equation}
Z_w(c) = \begin{cases} 
 |Z(c)| \dfrac{\sin\pi(\phi-\phi')}{\sin\pi\phi} & \text{ if } w \neq 1; \\
-|Z(c)| \sin \pi \phi'                           & \text{ if } w = 1.
\end{cases}
\end{equation}
\end{construction}

If for $w = e^{2\pi i \phi}$, $P(\phi)=0$, \ie \cref{orbit identification} yields a charge $Z_w$ with trivial kernel, then there is a unique choice of slicing $P_w$ which is compatible with $Z_w$ and with $P(0,1] \subset P_w[0,1]$, namely 
\[
P_w(1)=P(\phi,\phi+1] = P[\phi,\phi+1) 
\]
and all slices with phase in $(0,1)$ are zero. One can verify that $\sigma_w=(P_w,Z_w)$ is a pre-stability condition. 
Below we give a criterion for $\sigma_w$ to be a stability condition.

\begin{lemma}
\label{lem:non-lax support}
Suppose $w=e^{2\pi i \phi} \in \partial \pd$ and $P(\phi)=0$. Then $\sigma_w=(P_w,Z_w)$ is a stability condition if and only if $\phi$ is not an accumulation point of the phase diagram $\Phi_\sigma$. 
\end{lemma}

\begin{proof}
Suppose $\phi\neq 0$ so that $M_w(1)=1$. 
If $\phi$ is an accumulation point of $\Phi_\sigma$ and $\epsilon>0$ then using the first equation in \eqref{eqn:angle sum charge equation} shows that one can choose $\phi' \in \Phi_\sigma$ sufficiently close to $\phi$ that 
 \[
 |Z_w(c)| \leq \epsilon |Z(c)| \leq \epsilon \norm{c} \norm{Z}
 \]
 for any $c\in P(\phi')$. Hence the support property fails since $c$ remains semistable for $\sigma_w$. 
 
Conversely, if $\phi$ is not an accumulation point then there is $L>0$ such that
 \[
 |Z_w(c)| \geq L |Z(c)| \geq KL \norm{c}
 \]
 for all $c\in P(\phi')$ where $\phi'\in (0,1]$, and $K$ is a support constant for $\sigma$. Therefore the same inequality holds for all $c\in P_w(1)=P(\phi,\phi+1]$ so that $\sigma_w$ satisfies the support property. 
 
 The case $\phi=0$ is similar but uses the second equation in \eqref{eqn:angle sum charge equation}.
\end{proof}

If instead $P(\phi)\neq 0$, then there is a one-parameter family of slicings $P_w^\psi$ for $w = e^{2\pi i \phi}$ compatible with $Z_w$ constructed above and with $P(0,1] \subset P_w[0,1]$. 
They differ by the choice of phase for the massless objects in $P(\phi)$. Namely, for each $\psi\in [0,1]$ there is a unique such slicing $P_w^\psi$ with
\[
P_w^\psi(1) \supset P(\phi,\phi+1) \quad \text{and}\quad P_w^\psi(\psi)\supset P(\phi)
\]
and all other slices with phase in $(0,1]$ zero, where the inclusions are equalities for $\psi \neq 1$.
The pair $\sigma^\psi_w=(P^\psi_w,Z_w)$ is a lax pre-stability condition with massless category $\triang{\CC}{P(\phi)}$. Below, we give criteria for when it is in $\StabL{\CC}$.

\begin{proposition}
\label{prop:lax closure}
Suppose $\sigma=(P,Z) \in \Stab{\CC}$ has $P(\phi)\neq 0$ for some $0<\phi\leq 1$. Let $w=e^{2\pi i \phi} \in \partial \pd$. Then the lax pre-stability condition $\sigma_w^\psi$ is in $\overline{\Stab{\CC}}$ and satisfies lax support if and only if there is some $\epsilon>0$ such that one of the following cases applies:
\begin{enumerate}
\item \label{lax closure 1} $\psi\in(0,1)$ and $P(\phi-\epsilon,\phi)=0=P(\phi,\phi+\epsilon)$;
\item \label{lax closure 2} $\psi=1$, $P(\phi-\epsilon,\phi)=0$ and no indecomposable in $P[\phi,\phi+1)$ has phase in $(\phi,\phi+\epsilon)$;
\item \label{lax closure 3} $\psi=0$, $P(\phi,\phi+\epsilon)=0$ and no indecomposable in $P(\phi-1,\phi]$ has phase in $(\phi-\epsilon,\phi)$.
\end{enumerate}
\end{proposition}

The second two provisions in the proposition are stronger than the first: if no indecomposable in $P[\phi,\phi+1)$ has phase in $(\phi,\phi+\epsilon)$ then $P(\phi,\phi+\epsilon)=0$, and similarly in the other case.

\begin{proof}
First we check that $\sigma_w^\psi \in \overline{\Stab{\CC}}$.
For any $w\in \partial \pd$ one can construct a sequence $(w_n)$ in $\pd$ converging to $w\in \partial \pd$ and such that the limiting phase of objects in $P(\phi)$ is $\psi$. The charges $Z_{w_n}$ converge to $Z_w$ and the slicings $P_{w_n}$ converge to $P_w^\psi$ because each of the provisions implies $\phi$ is isolated in the phase diagram $\Phi_\sigma$. Hence $\sigma_w^\psi$ is in the closure of the orbit $\sigma\cdot G$ in $\Slice{\CC}\times\Hom{\Lambda}{\C}$. 

Suppose $0< \psi <1$ and $w\neq 1$. We show that in this case the lax support property holds for $\sigma_w^\psi$ if and only if provision \eqref{lax closure 1} holds.
By \eqref{eqn:angle sum charge equation}, for any indecomposable object $c \in P_w(1)=P(\phi,\phi+1)$, we have
\[
\frac{|Z_w(c)|}{\norm{c}} = \frac{|Z_w(c)|}{|Z(c)|} \frac{|Z(c)|}{\norm{c}} = \frac{|\sin \pi(\phi(c)-\phi)|}{\sin\pi\phi} \frac{|Z(c)|}{\norm{c}}. 
\]
Therefore, $\delta$-lax support holds for $0<\delta<1-\psi$ if and only if the right hand side is bounded away from $0$. As $1/\sin(\pi\phi)$ is fixed, this is equivalent to $|\sin\pi(\phi(c)-\phi)|$ and $|Z(c)| / ||c||$ being bounded away from $0$; the latter is by the support property for $\sigma$ and holds in any case.
The former is equivalent to $P(\phi,\phi+\epsilon) = 0 = P(\phi+1-\epsilon,\phi+1)$ for some $\epsilon>0$. When this is the case $P(\phi,\phi+1)=P(\phi+\epsilon, \phi+1-\epsilon)$, \ie $\delta$-lax support is equivalent to provision \eqref{lax closure 1}. 
The proof when $w=1$ is similar but uses instead $| Z_w(c)| / |Z(c) | = \sin\pi\phi(c)$. 

When $\psi = 1$ (resp., $\psi = 1$), the proof that the lax support property holds if and only if provision \eqref{lax closure 2} (resp., \eqref{lax closure 3}) holds is essentially the same, but now lax support must be checked for indecomposable $c$ in $P[\phi,\phi+1) \setminus P(\phi)$ (resp., $P(\phi,\phi+1]\setminus P(\phi+1)$), yielding the stronger provisions involving indecomposable objects rather than semistable objects
\end{proof}

\begin{definition}
We define subsets of the closure of the Poincaré disk by 
\begin{align*}
 \pd_\sigma &= \pd \cup \{e^{2\pi i\phi} : \phi \ \text{is not in the closure of $\Phi_\sigma$}\}\\
\text{and}\ \pd^Q_\sigma &= \pd\cup\{ e^{2\pi i \phi} : \phi\ \text{is not an accumulation point of $\Phi_\sigma$} \}.
\end{align*}
We give $\pd_\sigma$ the subspace topology from $\C$; the topology on $\pd^Q_\sigma$ is a refinement of the subspace topology, described below. By construction, $\pd_\sigma \subset \pd^Q_\sigma$ with complement $\pd^Q_\sigma \setminus \pd_\sigma$ the discrete subset of the boundary of the disk consisting of the isolated phases in $\Phi_\sigma$. 

We also define a space $\pd^L_\sigma$ as follows. Take the real oriented blow up of $\pd^Q_\sigma$ at each point in $\pd^Q_\sigma \setminus \pd_\sigma$. Since these points lie on the boundary, this adds a semicircular exceptional divisor over each point in the centre of the blowup with a natural orientation inherited from $\C$. Now delete the end, respectively start, point of the exceptional divisor over $w=e^{2\pi i \phi}$ whenever item (2), respectively (3), of \cref{prop:lax closure} fails. The resulting space $\pd^L_\sigma$ is given the subspace topology from the blow up, and $\pd^Q_\sigma$ is given the quotient topology from this via the blowdown map $\pi \colon \pd^L_\sigma \to \pd^Q_\sigma$. By construction $\pi$ and $\pd_\sigma \inj \pd^Q_\sigma$ are continuous. See \cref{fig:orbit closures} for an illustration.
\begin{figure}
\begin{center}
\begin{tabular}{ccccc}
\begin{tikzpicture}
\draw[ultra thick, red, fill=red!10] (130:\ra) arc (130:50:\ra);
 \fill[white] (90:\ra)  circle (2pt);
 \node at (90:1.2*\ra) {$w$};

\end{tikzpicture} &&
\begin{tikzpicture}
\fill[red!10] (130:\ra) arc (130:50:\ra);
 
\begin{scope}
\clip (130:\ra) arc (130:50:\ra);
\draw[ultra thick, blue, fill=white] (90:\ra) circle (10pt);
\end{scope}
\draw[ultra thick, red] (130:\ra) arc (130:98:\ra);
\draw[ultra thick, red] (82:\ra) arc (82:50:\ra);
 \fill[white] (98:\ra)  circle (2pt);
 \fill[blue] (82:\ra)  circle (2pt);

\end{tikzpicture} &&
\begin{tikzpicture}
\fill[red!10] (130:\ra) arc (130:50:\ra);
 
\draw[ultra thick, red] (130:\ra) arc (130:95:\ra);
\draw[ultra thick, red] (90:\ra) arc (90:50:\ra);
 
 \fill[blue] (90:\ra)  circle (3pt);

\end{tikzpicture} \\
$\pd_\sigma$ && $\pd_\sigma^L$ && $\pd_\sigma^Q$
\end{tabular}
\end{center}
\caption{
Local pictures of the orbit closures near a point $w=e^{2\pi i \phi}$ with $P(\phi)\neq 0$ and $P(\phi-\epsilon,\phi)=0=P(\phi,\phi+\epsilon)$ for some $\epsilon>0$. The closure $\pd_\sigma$ in $\PStab{\CC}$ is obtained by adding the red boundary arcs for the phase gaps; the closure $\pd_\sigma^L$ in $\PStabL{\CC}$ is obtained by adding the blue exceptional divisor --- to illustrate the two cases this is shown with an endpoint in the closure of the righthand arc, but not in the closure of the lefthand one; finally the closure $\pd_\sigma^Q$ in $\PStabQ{\CC}$ is obtained by collapsing this exceptional divisor to a point --- in the quotient topology this is in the closure of the righthand arc but not of the lefthand one.
}
\label{fig:orbit closures}
\end{figure}
\end{definition}

\begin{proposition}
\label{prop:orbit closure bijection}
The closures of the orbit $\sigma\cdot G$ in the stability space, in its lax closure and in the quotient stability space, are described by the commutative diagram 
\[
\begin{tikzcd}
 \pd_\sigma \ar[hook]{r} \ar{d} & \pd^L_\sigma \ar{r}{\pi} \ar{d}& \pd^Q_\sigma\ar{d}\\
(\Stab{\CC} \cap \overline{\sigma\cdot G})/\C \ar[hook]{r} &
( \StabL{\CC}^* \cap \overline{\sigma\cdot G})/\C \ar{r}{q} &
( \StabQ{\CC}^* \cap \overline{\sigma\cdot G})/\C
 \end{tikzcd}
\]
in which the vertical maps are homeomorphisms extending $\pd \to (\sigma\cdot G)/\C$, $w \mapsto \sigma_w = (P_w,Z_w)$.
\end{proposition}

\begin{proof}
The left hand homeomorphism is constructed in \cref{lem:non-lax support}, and the middle one in \cref{prop:lax closure}. The right hand homeomorphism is induced from the middle one via $q$ and the blowdown $\pi$, noting that both spaces are given the respective quotient topology. 
\end{proof}

In brief, the closure of the orbit of $\sigma=(P,Z)$ 
\begin{itemize}
\item in $\PStab{\CC}$, is obtained by adding a boundary arc $\{ e^{2\pi i \theta} \mid \phi <\theta<\phi'\}$ to the disk for each phase gap $P(\phi,\phi')=0$; stability conditions in this arc have length heart $P(\phi,\phi+1]$; 
\item in $\PStabL{\CC}$, is obtained by adding an additional arc (either with or without endpoints) over $e^{2\pi i \phi}$ for each isolated occupied phase $\phi$; lax stability conditions in this arc have length heart either $P(\phi,\phi+1]$ or $P[\phi,\phi+1)$ and massless category $\triang{\CC}{P(\phi)}$; 
\item in $\PStabQ{\CC}$, is obtained by collapsing these latter arcs to points, \ie adding a point at $e^{2\pi i \phi}$ for each isolated occupied phase $\phi$; the quotient stability condition at this point has length heart $P[\phi,\phi+1)/P(\phi)$ because the quotient of a length category by a Serre subcategory is again length.
\end{itemize}

\begin{corollary}
\label{cor:orbit closed}
The following are equivalent:
\begin{enumerate}
\item the orbit $\sigma\cdot G$ is closed in $\Stab{\CC}$;
\item the orbit $\sigma\cdot G$ is closed in $\StabL{\CC}^*$;
\item the orbit $\sigma\cdot G$ is closed in $\StabQ{\CC}^*$;
\item the phase diagram $\Phi_\sigma$ is dense.
\end{enumerate}
\end{corollary}

\begin{proof}
The orbit $\sigma\cdot G$ is closed in each of the three spaces if, and only if, respectively $\pd_\sigma=\pd$, $\pd_\sigma^L=\pd$ and $\pd_\sigma^Q=\pd$. Each provision is equivalent to the phase diagram $\Phi_\sigma$ being dense.
\end{proof}
\begin{remark}
When the phase diagram is dense, the Bridgeland metric induces the standard hyperbolic metric on the quotient $(\sigma\cdot G)/\C \cong \pd$ up to a factor \cite[Prop.~4.1]{Woolf12}. Since the hyperbolic metric is complete, the orbit is closed.
\end{remark}

\section{Two-dimensional stability spaces}
\label{sec:2d case}

\noindent
We illustrate our results in the case in which the stability space is a two-dimensional complex manifold. In this context there is a very close relationship between the boundary strata we add and the wall-and-chamber structure of the stability space.

Throughout this section, we assume that $\Lambda \cong \Z^2$ is a rank two lattice and $v = \id\colon K(\CC) \to \Lambda$. This implies that any length heart in $\CC$ contains exactly two simple objects up to isomorphism. If they exist, the two non-isomorphic simple objects are usually denoted $s$ and $t$.

\subsection{Walls and chambers}

Since $\rk(\Lambda)=2$ the projective stability space $\PStab{\CC} = \Stab{\CC} / \C$ is a non-compact Riemann surface such that $\Stab{\CC} \cong \PStab{\CC} \times \C$ as smooth manifolds; see \cref{rem:projective stability space}. Moreover, $\PP( \Hom{\Lambda}{\C} ) \cong \C\PP^1$ with equator $\PP( \Hom{\Lambda}{\R} ) \cong \R\PP^1$ consisting of the charges with rank one image. The charge map descends to a map $\cm \colon \PStab{\CC} \to \C\PP^1$.

The action of $\C$ preserves the set of semistable objects, so the wall-and-chamber structure of the stability space descends to a partition of $\PStab{\CC}$ into open chambers and real codimension one walls between them. 

The action of the universal cover $G$ of $\GL$ also preserves the sets of semistable objects, so each chamber and each wall is a union of orbits. The image of a free $G$-orbit in $\PStab{\CC}$ is a copy of $G/\C\cong \U$ biholomorphic to its image under the charge map. This image is either the Southern or Northern hemisphere in $\C\PP^1$. We refer to these free orbits as \defn{cells}. 

The image of a non-free orbit in $\PStab{\CC}$ is a point. The heart of a stability condition in such an orbit consists of semistable objects of a single phase and so is length. By our assumption it has two isomorphism classes of simple objects, $s$ and $t$. There is a one-parameter family consisting of images of orbits through stability conditions for which $s$ and $t$ are semistable of the same phase and where the ratio $m(s)/m(t)$ varies in $\R_{>0}$. This describes a real analytic curve in $\PStab{\CC}$ which we refer to as a \defn{cell-wall}. Each cell-wall is isomorphic to its image in $\PP\Hom{\Lambda}{\C}$ which is an arc in the equator from the point $Z(s)=0$ to $Z(t)=0$. The following lemma is immediate. 

\begin{lemma} \label{lem:cell-walls-algebraic-hearts}
Suppose $\CC$ admits an algebraic heart with two simple objects. Then there is a bijection between the set of cell-walls and the set of algebraic hearts (up to shift).
\end{lemma}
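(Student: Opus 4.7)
The plan is to define explicit maps in both directions and verify they are mutual inverses. In the forward direction, I would send an algebraic heart $\cat{A}$ with simples $s,t$ to the subset of $\eqstab{\CC}$ consisting of ($\C$-orbits of) stability conditions $\sigma = (P,Z)$ whose heart is a shift of $\cat{A}$ and for which $Z(s)/Z(t) \in \R_{>0}$, i.e.\ $s$ and $t$ have equal phase. Because $\cat{A}$ is algebraic, such a $\sigma$ is determined by the free choice of $Z(s), Z(t)\in \U$ on the same ray; after quotienting by the free $\C$-action (absorbing the common phase and overall scale), one is left with a single real parameter, namely the ratio $m(s)/m(t)\in \R_{>0}$. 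All such $\sigma$ have phase diagram $\Phi_\sigma$ equal to a single point of $\R/\Z$, so their $G$-orbits map to points in $\eqstab{\CC}$. Varying the mass ratio then traces out a real analytic curve of non-free orbit images — precisely a cell-wall, as described.

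In the reverse direction, given a cell-wall, pick any representative stability condition $\sigma=(P,Z)$ whose $\C$-equivalence class lies on it. Non-freeness of its orbit forces $\Phi_\sigma = \{\phi + \Z\}$ to be a single point. The slicing $P$ is locally finite and has a single occupied phase in each interval of strict length one, so $\cat{A} \coloneqq P(\phi-1,\phi]$ is a length heart; by the standing rank-two assumption it has exactly two isoclasses of simple objects, so $\cat{A}$ is algebraic. This defines the reverse map, with $\cat{A}$ well-defined up to shift.

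The two key verifications are that the maps are well defined on the quotients and mutually inverse. On the cell-wall side, different points correspond to different mass ratios $m(s)/m(t)$ but must all have the same heart up to shift: any two such $\sigma, \sigma'$ share the same $\C$-class or differ by the action of an element of $G$ whose slicing reparameterisation $\theta_g$ restricts to an integer translation on the single occupied phase, and such elements act on the heart by shift in $\cat{C}$. On the heart side, shifting $\cat{A}$ to $\cat{A}[n]$ corresponds to precomposing with $\theta_g(t)=t+n$, which lies in $G$ and so produces the same cell-wall image in $\eqstab{\CC}$. Composing the two constructions in either order then returns the original data up to the allowed ambiguity.

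The main obstacle I anticipate is not producing the maps but checking injectivity, i.e.\ that two algebraic hearts giving the same cell-wall must differ by a shift. This reduces to showing that the heart is locally (hence globally along the connected cell-wall) constant up to shift — equivalently, that the only elements of $G$ taking a slicing with a single occupied phase to another such slicing with the same image in $\eqstab{\CC}$ act by integer shifts. I expect this to follow directly from the explicit action \eqref{G action} of $G$ on slicings together with the identification of $\C\subset G$ as the conformal subgroup, but this is the step where the shift ambiguity has to be handled with care.
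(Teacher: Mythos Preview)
Your proposal is correct and is essentially a careful expansion of what the paper treats as immediate from the definitions: a cell-wall is \emph{defined} in the paper as the image in $\eqstab{\CC}$ of the one-parameter family of non-free orbits through stability conditions with a fixed algebraic heart (up to shift) in which the two simples have equal phase and varying mass ratio, so the bijection is built into the setup. Your concern about injectivity dissolves once you note that at any point on a cell-wall the heart is the unique non-zero slice $P(\phi)$ (since the phase diagram is a single point), hence is determined up to shift by the point; moving along the cell-wall only varies the mass ratio $m(s)/m(t)$, not the heart.
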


If there are non-split extensions between the simple objects $s$ and $t$ then a cell-wall is a genuine wall in the stability space along which these extensions are strictly semistable. All walls are of this form; in particular no two walls intersect. If there are no non-split extensions, for instance if the heart is semisimple, then the cell-wall lies in the same chamber as the two neighbouring cells.

\begin{lemma} \label{lem:linear-chains}
Each chamber of $\PStab{\CC}$ is a linear chain of cells, separated by cell-walls. If it is a doubly-infinite chain then it is biholomorphic to $\C$, otherwise it is biholomorphic to $\U$.
\end{lemma}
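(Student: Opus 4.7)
My plan is to first establish the chain structure of a chamber by a local analysis at cells and cell-walls, and then identify the biholomorphism type via uniformization. A cell $C$ corresponds to an algebraic heart $\cat{A}$ with two simples $s, t$ together with a choice of phase ordering. Its topological boundary in $\eqstab{\CC}$ is approached in exactly two ways: either the phases $\phi(s), \phi(t)$ converge to a common value, giving a cell-wall at which $\cat{A}$ persists but the ordering flips; or the phase difference tends to $1$, triggering an HRS tilt and giving a cell-wall adjacent to a new heart $\cat{A}'$. Each cell thus has exactly two cell-walls on its boundary and, dually, each cell-wall has exactly two adjacent cells. A chamber retains only those cell-walls with Ext-trivial simples, so each cell has at most two cell-walls in the chamber. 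A cycle in the adjacency graph would require a non-trivial sequence of tilts returning to the starting heart, which is impossible since each tilt strictly alters the set of simples up to shift; hence the chamber is a (possibly finite, semi-infinite, or doubly-infinite) path.

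For the biholomorphism type, each cell is biholomorphic to $\U$ via the charge map to a hemisphere of $\mathbb{C}\mathbb{P}^1$ (modulo $\C$). Two adjacent cells are glued along a real-analytic arc of their boundaries; by Schwarz reflection the glued region is biholomorphic to $\U$. Inductively, any finite chain is biholomorphic to $\U$, and a semi-infinite chain likewise remains $\cong \U$, since the terminal wall keeps the conformal radius at a base point bounded as we form the increasing union. For a doubly-infinite chain, the key observation is that the net effect of advancing two cell-walls along the chain acts on the projectivised charge in $\mathbb{C}\mathbb{P}^1$ by a fixed M\"obius transformation (the analogue of a Coxeter transformation on the charge lattice). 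In the doubly-infinite case, this transformation is hyperbolic with two distinct fixed points $p_\pm \in \mathbb{C}\mathbb{P}^1$; the projectivised charges $[Z(s_n)]$ of the iterated simples then accumulate at $p_\pm$, the image of the chamber under $\cm$ is $\mathbb{C}\mathbb{P}^1 \setminus \{p_+, p_-\} \cong \C^*$, and the chamber, being simply connected, is its universal cover, namely $\C$ via $\exp$.

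The main obstacle is the final step in the doubly-infinite case: showing that the monodromy M\"obius transformation really is hyperbolic (rather than elliptic or parabolic), and that the chamber is indeed the universal cover of $\C^*$. This requires analysing the monodromy in terms of the extension data between successive pairs of simples, and using the fact that the chain is infinite in both directions to rule out elliptic (which would give a closed cycle, contradicting Step~1) and parabolic behaviour (which would force the chain to be only semi-infinite). Once this is established, the identification $\cong \C$ is immediate from the classical description of the universal cover $\C \to \C^*$.
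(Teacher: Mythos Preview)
Your proposal rests on a misidentification that derails both parts. You write that ``a cell $C$ corresponds to an algebraic heart $\cat{A}$ with two simples $s, t$ together with a choice of phase ordering,'' but this is backwards: it is \emph{cell-walls}, not cells, that are in bijection with algebraic hearts (up to shift). A cell is the image of a free $G$-orbit and can have many cell-walls in its closure --- one for each algebraic heart whose simples are stable in that cell. In the $A_2$ example the central cell (where $s$, $e$, $t$ are all stable) has three cell-walls on its boundary. Your claim that each cell has exactly two cell-walls is therefore false in general, and the chain structure does not follow from your local analysis.

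The paper's argument is quite different. Fixing a cell-wall $W$ (with simples $s, t$) in the closure of a cell $C$, it shows that any \emph{other} cell-wall $W'$ in $\overline{C}$ that is not an actual wall must have simples $\{s[1], t\}$ after relabelling: since $s, t$ remain semistable on $W'$, suitable shifts lie in the heart for $W'$, and the absence of non-split extensions between the simples of $W'$ forces these shifts to \emph{be} those simples (via primitivity of their classes in $\Lambda = K(\CC)$). The upshot is that \emph{the two simple objects $s, t$ are constant, up to shift, throughout the entire chamber}. Cycles are then ruled out because the phase difference $\phi(t) - \phi(s)$ is well-defined and monotone along the chain.

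This same observation makes the biholomorphism type almost immediate, and shows your monodromy approach is misdirected. Since $s, t$ are fixed up to shift throughout the chamber, the chamber maps under $\cm$ into $\PP\mor{\Lambda}{\C} \setminus \{Z(s)=0,\, Z(t)=0\} \cong \C^*$, and being simply connected it lifts biholomorphically into the universal cover $\C$. A doubly-infinite chain fills all of $\C$; otherwise the chamber is a proper simply-connected open subset and the Riemann Mapping Theorem gives $\U$. There is no non-trivial ``Coxeter-type'' M\"obius transformation on the projectivised charge: because the simples do not change along the chain, the transformation you posit is the identity, not hyperbolic. Your Schwarz-reflection and monodromy machinery is therefore both unnecessary and, as formulated, based on a false premise.
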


\begin{proof}
Let $C$ be a cell. If it has no cell-walls in its closure then it is a chamber and we are done. Let $W$ be a cell-wall in the closure, and let $s$ and $t$ be simple objects in a corresponding algebraic heart $\cat{H}$. Their phases agree on $W$ and we may assume $\phi(t) - 1 < \phi(s)<\phi(t)$ in $C$. 

Suppose $W'$ is another cell-wall in the closure of $C$ which is not an actual wall. Let $s'$ and $t'$ be simple objects in a corresponding algebraic heart $\cat{H}'$. Since $s$ and $t$ are semistable on $W'$, some shifts $s[m]$ and $t[n]$ lie in $\cat{H}'$. Indeed, since we assume $\phi(t)-1<\phi(s)<\phi(t)$ in $C$, we may apply a shift so that $s[1], t\in\cat{H}'$. Since $s[1]$ and $t$ are indecomposable and there are no non-split extensions between $s'$ and $t'$ we deduce, after swapping $s'$ and $t'$ if necessary, that $s[1]$ and $t$ are respectively self-extensions of $s'$ and $t'$. However, the classes of $s[1]$ and $t$ are primitive in $\Lambda=K(\CC)$ so $s'=s[1]$ and $t'=t$. It follows that the closure of $C$ in $\PStab{\CC}$ is precisely $W\cup C \cup W'$.

Arguing inductively we conclude that the chamber is either a linear chain of cells as claimed or a cycle. The possibility that it is a cycle is excluded since the phase difference between $s$ and $t$ is well defined and monotonic as we move along a chain of cells. 

For the final part note that the chamber is an open subset of the universal cover $\C$ of $\PP\Hom{\Lambda}{\C} \setminus \{ Z(s)=0,Z(t)=0\}$. If the chain of cells is doubly-infinite then it is the entirety of $\C$. Otherwise it is a simply-connected open strict subset of $\C$ and is therefore biholomorphic to $\U$ by the Riemann Mapping Theorem.
\end{proof}

\subsection{The lax closure and quotient stability space}
\label{subsec:lax and quotient partial compactifications}

As before, we remove the boundary strata of $\StabL{\CC}$ where all objects are massless and denote the resulting space by $\StabL{\CC}^*$. Therefore $\rank(\Lambda_\NN)= 1$ when the massless subcategory $\NN\neq 0$. The action of $\C$ on $\StabL{\CC}^*$ is free and the quotient $\PStabL{\CC} \coloneqq \StabL{\CC}^*/\C$ is a union of the open subset $\PStab{\CC}$ and boundary components $\StabLN{\CC}{\NN}/\C$ homeomorphic to an open subset of $\R$ for each massless subcategory $\NN$. Passing to the quotient $\PStabQ{\CC} \coloneqq \StabQ{\CC}^*/\C$ the boundary component $\StabLN{\CC}{\NN}/\C$ is collapsed to a point, corresponding to the unique point of $\PStab{\CC/\NN}$. The charge projection maps this to $\Hom{\Lambda/\Lambda_\NN}{\C} \in \PP( \Hom{\Lambda}{\C} ) \cong \C\PP^1$.

The decomposition of $\PStab{\CC}$ into cells and cell-walls is locally finite so that $\PStab{\CC}$ is homeomorphic to the union of the closures of its cells. The same applies to $\PStabL{\CC}$, and therefore also to its quotient $\PStabQ{\CC}$. The closures of the cells were described in the previous section. This leads to the following local description of a neighbourhood of $\StabLN{\CC}{\NN}/\C$ in $\PStabL{\CC}$. 

Let $\Delta_\NN$ be a disk about $\Hom{\Lambda/\Lambda_\NN}{\C}$ in $\PP( \Hom{\Lambda}{\C} )$. Let $\beta \colon \mathrm{Bl}(\Delta_\NN) \to \Delta_\NN$ be the real oriented blowup at the centre $\Hom{\Lambda/\Lambda_\NN}{\C}$ and $\gamma \colon \widetilde{\mathrm{Bl}}(\Delta_\NN)\to \mathrm{Bl}(\Delta_\NN) $ its universal cover:

\begin{center}
\begin{tikzpicture}
\draw[red, dotted, fill=red!10] (0,0) circle (1cm);
\draw[thick, dashed, blue]  (0,0) -- (1,0);
\fill[red] (0,0) circle (0.05cm);
\draw[red, dotted, fill=red!10] (4,0) circle (1cm);
\draw[thick, dashed, blue]  (4,0) -- (5,0);
\draw[red, thick,  fill=white] (4,0) circle (0.3cm);
\fill[red!10] (7,-0.5) -- (11,-0.5) --(11,0.5) -- (7,0.5) -- cycle;
\draw[thick, red] (7,-0.5) -- (11,-0.5);
\draw[dotted, red] (7,0.5) -- (11,0.5);
\draw[thick, dashed, blue] (8,-0.5) -- (8,0.5);
\draw[thick, dashed, blue] (9,-0.5) -- (9,0.5);
\draw[thick, dashed, blue] (10,-0.5) -- (10,0.5);
\draw[->] (2.5,0) -- (1.5,0);
\draw[->] (6.5,0)--(5.5,0);
\node at (2,0.25) {$\beta$};
\node at (6,0.25) {$\gamma$};
\node at (0,-1.5) {$\Delta_\NN$};
\node at (4,-1.5) {$\mathrm{Bl}(\Delta_\NN)$};
\node at (9,-1.5) {$\widetilde{\mathrm{Bl}}(\Delta_\NN)$};
\end{tikzpicture}
\end{center}
The homeomorphisms of \cref{prop:orbit closure bijection} identify an open neighbourhood of $\StabLN{\CC}{\NN}/\C$ with an open subset of $\widetilde{\mathrm{Bl}}(\Delta_\NN)$; the stratum $\StabLN{\CC}{\NN}/\C$ is identified with an open subset of the universal cover of the exceptional divisor, \ie with an open subset of $\R$ recording the allowed massless phases. Note that $\PStabL{\CC}$ is \emph{not} a Riemann surface with boundary because each boundary stratum has a holomorphic function which vanishes along it, namely the charge of (any) object which becomes massless on that stratum. The local model for $\PStabQ{\CC}$ is the quotient space obtained by collapsing the boundary component $\StabLN{\CC}{\NN}/\C$ to a point.

Next we classify the stable massless objects. For simple $s$ in a heart $\HH$ set $s\orth = \{ h\in \HH \mid \Hom{s}{h}=0\}$. When $\HH$ is algebraic, $\HH=\clext{s}*s\orth$ and the tilt $s\orth[1]*\clext{s}$ is another heart\footnote{This heart is the left tilt at the torsion pair $(\clext{s}, s\orth)$ in $\HH$, which coincides with the shift of the right simple tilt at $s$, so the condition in \cref{prop:massless stables = algebraic simples} requires the right simple tilt at $s$ to be algebraic.}.

\begin{proposition}
\label{prop:massless stables = algebraic simples}
For an object $s\in \CC$, the following statements are equivalent:
\begin{enumerate}[label=(\roman*)]
\item $s$ is massless and stable at some point in $\StabL{\CC}^*$;
\item $s$ is simple in an algebraic heart such that the tilt $s\orth[1]*\clext{s}$ is also algebraic. 
\end{enumerate}
\end{proposition}

\begin{proof}
$(i) \implies (ii):$
Suppose $s$ is a massless stable object for some lax stability condition in $\StabL{\CC}^*$. Since $\PStabL{\CC}$ is the union of the closures of its cells, this lax stability condition is in the closure of a free orbit $\sigma\cdot G$ for some $\sigma=(P,Z)\in \Stab{\CC}$. By \cref{prop:lax closure,prop:orbit closure bijection} it has the form $\sigma_w^\psi$ for some $w=e^{2\pi i \phi}$ and $\psi\in[0,1]$. Therefore $s\in P(\phi)$ where $P(\phi-\epsilon,\phi)=0=P(\phi,\phi+\epsilon)$ for some $\epsilon >0$. It follows that $s$ is simple in $P(\phi-1,\phi]$, which is an algebraic heart because $P(\phi,\phi+\epsilon)=0$, see, for example, \cite[Thm.~2.7]{BCPW24}. By assumption this heart has only two simple objects. Since the other is massive it is not in $P(\phi)$ because the orbit is free so the simple objects have distinct phases. We conclude that $P(\phi)=\clext{s}$. Therefore $s\orth = P(\phi-1,\phi)$ so that $s\orth[1]*\clext{s} = P[\phi,\phi+1)$. This is also algebraic because $P(\phi-\epsilon, \phi)=0$.

$(ii) \implies (i):$
Suppose $s$ is simple in an algebraic heart $\HH$ such that $s\orth[1]*\clext{s}$ is algebraic. Denote by $t \in \HH$ the other simple object. The charge $Z(s)=-1$ and $Z(t)=i$ determines a stability condition $\sigma=(P,Z)$ with heart $P(0,1]=\HH$ and $P(1)=\clext{s}$. By assumption $s\orth[1]*\clext{s}=P[1,2)$ is algebraic so that $P(1-\epsilon,1)=0$ for some $\epsilon>0$. 
Similarly, $P(1,1+\epsilon) = 0$ for some $\epsilon > 0 $ because $\HH$ is algebraic; in fact, we have $P(1, 3/2) = 0$.
Then, by \cref{prop:lax closure}, $s$ is massless and stable in the lax stability condition $\sigma_w^\psi$ for $w=e^{2\pi i \phi}$ and any $0< \psi<1$.
\end{proof}

\begin{corollary}
\label{cor:rank 2 boundary points}
The boundary points of $\PStabQ{\CC}$ are in bijection with iso-classes up to shift of simple objects $s$ in algebraic hearts for which $s\orth[1]*\clext{s}$ is also algebraic. The corresponding boundary point has massless category $\NN=\thick{\CC}{s}$ and lies over $\Hom{\Lambda/\Lambda_\NN}{\C}$ in $\PP( \Hom{\Lambda}{\C} )$.
\end{corollary}

\begin{proof}
The boundary points of $\PStabQ{\CC}$ have the form $\PStabQ{\CC,\NN}$ for the possible massless categories $\NN$. These massless categories have the form $\NN = \thick{\CC}{s}$ for a massless stable $s$. The result follows from \cref{prop:massless stables = algebraic simples}.
\end{proof} 

\subsection{The exchange graph} 
The \defn{exchange graph} $\EG{\CC}$ has one vertex for each algebraic heart of a stability condition in $\Stab{\CC}$ and an edge whenever two hearts are related by a simple HRS tilt; see \cite{HRS96,Woolf10}. Each vertex of $\EG{\CC}$ is at most $4$-valent because we can tilt left or right at each of the two simple objects of the corresponding heart, but in some cases the tilted hearts may not themselves be algebraic, see the example in \cref{subsec:nilrep}.

The shift acts freely on $\EG{\CC}$ and the quotient $\EG{\CC}/\Z$ has one vertex for each algebraic heart up to shift, \ie one vertex for each cell-wall. This quotient graph can be embedded into $\PStabQ{\CC}$ by placing a vertex on each cell-wall and embedding edges as smooth curves in the unique cell containing the two cell-walls corresponding to its vertices in its closure.

\subsection{Dense phase case}
\label{subsec:dense phase case}

If $\Stab{\CC}$ contains a stability condition $\sigma$ whose phase diagram $\Phi_\sigma$ is dense in $\R/\Z$ then the $G$-orbit of $\sigma$ is free and by \cref{cor:orbit closed} closed. Since the orbit is also open it is an entire connected component, consisting of a single chamber with no walls. Every stability condition in this component has dense phases and therefore no stability condition in this component has an algebraic heart. Our theory adds no boundary points to this component.

This situation occurs for the space $\Stab{X}$ of numerical stability conditions on a smooth complex projective curve $X$ of genus $g>0$, see \cite{Bridgeland07} for elliptic curves and  \cite{Macri07} for the higher genus cases. It also occurs for the space $\Stab{Q}$ of stability conditions on the bounded derived category of finite-dimensional representations of a $2$-vertex quiver $Q$ with an oriented cycle, see \cite[Rem.~3.33]{DHKK14} for the existence of a dense-phase stability condition.

\subsection{Non-dense phase case}
\label{subsec:non-dense phase case}

Now suppose that there is at least one $\sigma$ in $\Stab{\CC}$ with non-dense phases. By the above, every stability condition in the component of $\sigma$ has non-dense phases, and therefore lies in the $\C$-orbit of a stability condition with length heart. Recall that we assume $\Lambda = K(\CC) \cong \Z^2$ so that any algebraic heart has two iso-classes of simple objects.

Using the description of the closure of the orbit $(\sigma\cdot G)/\C$ in \cref{prop:orbit closure bijection}, we can describe the component of $\sigma$ in $\PStabL{\CC}$ and $\PStabQ{\CC}$. First we determine the chamber containing $\sigma$ and its walls. The latter correspond to non-trivial algebraic hearts whose simple objects are stable in the chamber. Then we find the stable objects in the adjacent chambers and repeat. In our examples there are only finitely many chambers up to the action of $\Aaut{\Lambda}{\CC}$, so this is an effective strategy. 

\subsection{The $A_2$ quiver}
\label{subsec:A2}

Let $Q$ be the $A_2$ quiver with two vertices and one arrow. In this section we consider two stability spaces associated with algebras associated with the $A_2$ quiver, namely the $A_2$ path algebra, and the associated $2$-Calabi--Yau Ginzburg algebra.

\subsubsection{Classic $A_2$}

First we consider the bounded derived category $\Db(A_2)$ of finite-dimensional representations of the classic $A_2$. Its stability space $\Stab{A_2}\cong \C^2$ was first described by Alastair King \cite{elephant}, see \cite{BQS20} for further references.

The standard heart of $\Db(A_2)$ has two exceptional simple objects $s$ and $t$ with one non-split extension $0\to s\to e\to t \to 0$ between them. It is easy to construct a stability condition in which $s$, $e$ and $t$ are the only stable objects up to shift. Its phase diagram has three isolated phases. By \cref{lem:non-lax support} the cell containing this stability condition has three cell-walls. The object $e$ destabilises as we cross the cell-wall along which $s$ and $t$ have the same phase, and we enter a chamber in which only $s$ and $t$ are stable. Since $e$ is the unique indecomposable extension between any shifts of $s$ and $t$, this chamber is a chain of cells indexed by $\N$. Similar considerations apply to the other two walls of the initial cell. Thus $\PStab{A_2}$ has four chambers, one in which $s$, $e$ and $t$ are all stable and three in which pairs of them are stable.

Since each heart in $\Db(A_2)$ has finite representation type \cref{prop:lax closure} implies that the closure of each cell in $\PStabL{A_2}$ is obtained by adding a closed arc at each boundary point where a stable object's mass vanishes. The unions of these arcs form three boundary strata, each diffeomorphic to $\R$, where the objects $s$, $t$ and $e$ respectively are massless. The space $\PStabQ{A_2}$ is obtained by collapsing each of these to a point. The set of boundary strata agrees with the descriptions in \cref{cor:finite type massless} and \cref{prop:massless stables = algebraic simples}.

The category $\Db(A_2)$ is fractional Calabi--Yau; the Serre functor $S$ satisfies $S^3=[1]$. This acts by rotation, preserving the central chamber and cyclically permuting the other three chambers and the three boundary strata. See \cref{fig:A2} for an illustration.

\subsubsection{$2$-Calabi--Yau $A_2$}
\label{subsec:GA2}
Now consider the $2$-Calabi--Yau category $\Db(\Ginzburg A_2)$, where $\Ginzburg A_2$ is the Ginzburg dg algebra of the $A_2$ quiver;
see \eg \cite[\S 7.2]{Keller12} for details.

The stability space $\PStab{\Ginzburg A_2} \cong \pd$ is the universal cover of the thrice punctured Riemann sphere and was first described in \cite{Thomas06}. See \cite{BQS20} for a detailed discussion and further references, and also \cite{KQ15,QW18} for more general discussions of the stability spaces of the Ginzburg dg-algebras associated to Dynkin quivers.

The standard heart of $\Db(\Ginzburg A_2)$ has two $2$-spherical simple objects $s$ and $t$, with one non-split extension $0\to s \to e\to t \to 0$ and $0\to t\to f \to s \to 0$ in each direction. Each $2$-spherical object in $\Db(\Ginzburg A_2)$ generates a twist automorphism. For example, applying the twist $\twist_{s}$ about $s$ to the triangle $s\to e\to t \to s[1]$ yields the triangle $s[-1] \to t \to f \to s$, and then applying $\twist_{t}$ yields the rotation $e[-1] \to t[-1] \to s \to e$ of the original triangle. In particular $e$ and $f$ are also $2$-spherical. The subgroup of $\Aut{\Db(\Ginzburg A_2)}$ generated by $\twist_{s}$ and $\twist_{t}$ is isomorphic to the Artin--Tits braid group $B_3$ of the $A_2$ quiver, \ie the braid group on three strands. The centre $Z(B_3)$ is generated by a single automorphism which acts as the Serre functor $S = [2]$. Let $\mathcal{S}$ be the set of equivalence classes of spherical objects in $\Db(\Ginzburg A_2)$ up to isomorphism and shift. We abuse notation by using the same notation for spherical objects and their classes in $\mathcal{S}$; this is harmless since the twists $\twist_{u}=\twist_{u[1]}$ agree for any spherical object $u$. The quotient $B_3/Z(B_3)\cong \PSL_2(\Z)$ acts on $\mathcal{S}$ and the stabiliser of $u$ is the infinite cyclic subgroup generated by $\twist_{u}$. From the above examples $t$ is in the orbit of $s$ (indeed the action on $\mathcal{S}$ is transitive although we do not need this). 

Now consider the stability space $\Stab{\Ginzburg A_2}$. As in the $\Db(A_2)$ case one can easily construct a stability condition in which, up to shift, the stable objects are the two simple objects $s$ and $t$ of the standard heart and one, $e$ say, of the two extensions between them. The phase diagram has three isolated phases so the corresponding cell in $\PStab{\Ginzburg A_2}$ has three cell-walls. As before $e$ destabilises as we cross the wall where $s$ and $t$ have the same phase, but now the other extension $f$ becomes stable on the far side of the wall. Thus we enter the chamber obtained by applying $\twist_{s}$ to the initial one. Similar considerations apply to the other walls of the initial chamber. Therefore $\PSL_2(\Z)$ acts transitively on the chambers in $\PStab{\Ginzburg A_2}$, each of which is a single cell bounded by three walls. There are three stable $2$-spherical objects in each chamber whose respective masses vanish at the three boundary points of the chamber. The action of $\PSL_2(\Z)$ on chambers is free because no pair of distinct spherical objects, {\it a fortiori} no triple, is fixed. The action on walls is also free and it quickly follows from the examples of twist actions that it is transitive. 

The heart of each stability condition is algebraic, see \eg \cite{QW18}. Therefore by \cref{cor:finite type massless,,prop:massless stables = algebraic simples}, the only massless stable objects are the simple objects of the hearts of stability conditions and there is one boundary stratum in $\PStabL{\Ginzburg A_2}$ for each (up to shift and isomorphism). Moreover, since each heart has finite representation type, \cref{prop:lax closure} implies that there is no restriction on the massless phase so that each boundary stratum is a copy of $\R$. The space $\PStabQ{\Ginzburg A_2}$ is obtained by collapsing each of these strata to a point.

The twist $\twist_{s}$ acts on $\PStabQ{\Ginzburg A_2}$ by a hyperbolic isometry fixing the boundary point at which $s$ is massless. Therefore, $\twist_{s}$ acts either by an ideal rotation about that point or a translation, since, locally at the fixed point the action universally covers the action on $\PP\Hom{\Lambda}{\C}$. This is given by the matrix
$
\begin{psmallmatrix} -1 & 0 \\ \phantom{-}1 & 1 \end{psmallmatrix}
$
with respect to the basis $\{ [s], [t]\}$ of $\Lambda=K(\Db(\Ginzburg A_2))$. Since the eigenvalues are $\pm 1$ the twist acts by an ideal rotation. Up to isometry, the three boundary points where $s$, $e$ and $t$ are massless can be chosen arbitrarily on $\partial \pd$ and this fixes the remaining boundary points of $\PStabQ{{\Ginzburg A_2}}$ uniquely. They form a dense subset of $\partial \pd$. See \cref{fig:A2} for an illustration.

\begin{figure}[ht]
\begin{center}
\begin{tabular}{cc}
\begin{tikzpicture}
\draw[thick, dashed, fill=red!10] (0,0) circle (\ra);

\begin{scope}
\clip (0,0) circle (\ra);
\foreach \theta in {0,120,240}  \hgfill{\theta}{\theta+120}{red!20};

\foreach \theta in {0,120,240} \hgline{red}{\theta}{\theta+120};

\foreach \theta in {0,120,240} \draw[thick, fill=white] (\theta:\ra)  arc (\theta:\theta+360:0.15*\ra);
\end{scope}

\foreach \theta in {0,120,240} \fill[white] (\theta:\ra)  circle (1pt);
\node at (0:1.15*\ra) {$e$}; 
\node at (120:1.15*\ra) {$s$}; 
\node at (240:1.15*\ra) {$t$}; 
\end{tikzpicture} & \begin{tikzpicture}
\draw[thick, dashed, fill=red!10] (0,0) circle (\ra);

\begin{scope}
\clip (0,0) circle (\ra);
\foreach \theta in {0,120,240} \hgfill{\theta}{\theta+120}{red!20};
\foreach \theta in {0,120,240} \hgline{red}{\theta}{\theta+120};
\end{scope}

\foreach \theta in {0,120,240} \fill (\theta:\ra)  circle (2pt);
\node at (0:\ra+0.35) {$e$}; 
\node at (120:\ra+0.35) {$s$}; 
\node at (240:\ra+0.35) {$t$}; 
\end{tikzpicture} \\
$\PStabL{A_2}$ & $\PStabQ{A_2}$ \\
&\\
\begin{tikzpicture}
\draw[thick, dashed, fill=blue!05] (0,0) circle (\ra);

\begin{scope}
\clip (0,0) circle (\ra);
\foreach \theta in {0,120,240} \hgfill{\theta}{\theta+120}{blue!10};
\foreach \theta in {0,60,...,300} \hgfill{\theta}{\theta+60}{blue!15};
\foreach \theta in {0,30,...,330} \hgfill{\theta}{\theta+30}{blue!20};
\foreach \theta in {0,15,...,345} \hgfill{\theta}{\theta+15}{blue!30};


\foreach \theta in {0,120,240} \hgline{blue}{\theta}{\theta+120};
\foreach \theta in {0,60,...,300} \hgline{blue}{\theta}{\theta+60};
\foreach \theta in {0,30,...,330} \hgline{blue}{\theta}{\theta+30};
\foreach \theta in {0,15,...,345} \hgline{blue}{\theta}{\theta+15};


\foreach \theta in {0,120,240} \draw[thick, fill=white] (\theta:\ra)  arc (\theta:\theta+360:0.15*\ra);
\foreach \theta in {60, 180, 300} \draw[thick, fill=white] (\theta:\ra)  arc (\theta:\theta+360:0.1*\ra);
\foreach \theta in {30,90,...,330} \draw[thick, fill=white] (\theta:\ra)  arc (\theta:\theta+360:0.075*\ra);
\foreach \theta in {15,45,...,345}  \draw[thick, fill=white] (\theta:\ra)  arc (\theta:\theta+360:0.03*\ra);
\end{scope}

\foreach \theta in {0,15,...,345} \fill[white] (\theta:\ra)  circle (1pt);
\node at (0:1.15*\ra) {$e$}; 
\node at (120:1.15*\ra) {$s$}; 
\node at (240:1.15*\ra) {$t$}; 
\end{tikzpicture} & \begin{tikzpicture}
\draw[thick, dashed, fill=blue!05] (0,0) circle (\ra);

\begin{scope}
\clip (0,0) circle (\ra);
\foreach \theta in {0,120,240} \hgfill{\theta}{\theta+120}{blue!10};
\foreach \theta in {0,60,...,300} \hgfill{\theta}{\theta+60}{blue!15};
\foreach \theta in {0,30,...,330} \hgfill{\theta}{\theta+30}{blue!20};
\foreach \theta in {0,15,...,345} \hgfill{\theta}{\theta+15}{blue!30};


\foreach \theta in {0,120,240} \hgline{blue}{\theta}{\theta+120};
\foreach \theta in {0,60,...,300} \hgline{blue}{\theta}{\theta+60};
\foreach \theta in {0,30,...,330} \hgline{blue}{\theta}{\theta+30};
\foreach \theta in {0,15,...,345} \hgline{blue}{\theta}{\theta+15};

\end{scope}

\foreach \theta in {0,15,...,345} \fill (\theta:\ra)  circle (2pt);
\node at (0:\ra+0.35) {$e$}; 
\node at (120:\ra+0.35) {$s$}; 
\node at (240:\ra+0.35) {$t$}; 
\end{tikzpicture} \\
$\PStabL{\Gamma_2A_2}$ & $\PStabQ{\Gamma_2A_2}$ \\
\end{tabular}
\end{center}
\caption{Illustrations of $\PStabL{\CC}$ and $\PStabQ{\CC}$ for $\CC$ the derived category of the $A_2$ quiver, and the associated Ginzburg $2$-Calabi--Yau category. In each case $\PStabL{\CC}$ is a real surface with boundary (black lines) whose interior $\Stab{\CC}$ (shaded) is a contractible non-compact Riemann surface biholomorphic to $\C$ (pink) in the former case and to the Poincar\'e disk (blue) in the latter. This is divided by walls (respectively red or blue lines) into chambers (indicated by shading) in which the set of stable objects is constant. The walls correspond to non-semisimple algebraic hearts in $\CC$ up to shift. The phases of the two simple objects of the heart coincide on the wall and the stability of extensions between them changes on crossing it. The endpoints of the wall lie on the boundary strata where the two simple objects become massless (indicated by labels). The stratum consists of the allowed phases for that massless object increasing in the clockwise direction; in these two examples there is no restriction on the massless phase. An object is stable in a chamber if the closure of the chamber intersects the boundary stratum where that object is massless. The spaces $\PStabQ{\CC}$ on the right are obtained by collapsing each boundary stratum to a point. See \cref{subsec:A2} for details.}

\label{fig:A2}
\end{figure}

\begin{figure}
\begin{center}
\begin{tabular}{cc}
\begin{tikzpicture}
\draw[thick, dashed, fill=red!5] (0,0) circle (\ra);

\begin{scope}
\clip (0,0) circle (\ra);
\hgfill{90}{230}{red!10};
\hgfill{90}{170}{red!15};
\hgfill{90}{130}{red!20};
\hgfill{90}{110}{red!25};

\hgfill{90}{-50}{red!10};
\hgfill{90}{10}{red!15};
\hgfill{90}{50}{red!20};
\hgfill{90}{70}{red!25};

\hgfill{95}{100}{red!30};
\hgfill{100}{110}{red!30};
\hgfill{110}{130}{red!30};
\hgfill{130}{170}{red!30};
\hgfill{170}{230}{red!30};

\hgfill{85}{80}{red!30};
\hgfill{80}{70}{red!30};
\hgfill{70}{50}{red!30};
\hgfill{50}{10}{red!30};
\hgfill{10}{-50}{red!30};

\hgfill{230}{310}{red!30};

\hgline{red}{95}{100};
\hgline{red}{100}{110};
\hgline{red}{110}{130};
\hgline{red}{130}{170};
\hgline{red}{170}{230};

\hgline{red}{85}{80};
\hgline{red}{80}{70};
\hgline{red}{70}{50};
\hgline{red}{50}{10};
\hgline{red}{10}{-50};

\hgline{red}{230}{310};

\hgline{red}{90}{95};
\hgline{red}{90}{100};
\hgline{red}{90}{110};
\hgline{red}{90}{130};
\hgline{red}{90}{170};
\hgline{red}{90}{230};
\hgline{red}{90}{-50};
\hgline{red}{90}{10};
\hgline{red}{90}{50};
\hgline{red}{90}{70};
\hgline{red}{90}{80};
\hgline{red}{90}{85};

\foreach \theta in {230,310}  \draw[thick, fill=white] (\theta:\ra)  arc (\theta:\theta+360:0.175*\ra);
\foreach \theta in {170,10}  \draw[thick, fill=white] (\theta:\ra)  arc (\theta:\theta+360:0.15*\ra);
\foreach \theta in {130,50}  \draw[thick, fill=white] (\theta:\ra)  arc (\theta:\theta+360:0.1*\ra);
\foreach \theta in {110,70}  \draw[thick, fill=white] (\theta:\ra)  arc (\theta:\theta+360:0.075*\ra);
\foreach \theta in {100,80}  \draw[thick, fill=white] (\theta:\ra)  arc (\theta:\theta+360:0.05*\ra);
\draw[thick, fill=white] (90:\ra)  arc (90:450:0.025*\ra);

\end{scope}

\foreach \theta in {90,100, 110, 130,170,230,310, 10,50,70,80} \fill[white] (\theta:\ra)  circle (1pt);
\node at (90:1.15*\ra) {$s$}; 
\node at (230:1.15*\ra) {$t_0$}; 
\node at (310:1.15*\ra) {$t_1$}; 
\end{tikzpicture} & \begin{tikzpicture}
\draw[thick, dashed, fill=red!5] (0,0) circle (\ra);

\begin{scope}
\clip (0,0) circle (\ra);
\hgfill{90}{230}{red!10};
\hgfill{90}{170}{red!15};
\hgfill{90}{130}{red!20};
\hgfill{90}{110}{red!25};

\hgfill{90}{-50}{red!10};
\hgfill{90}{10}{red!15};
\hgfill{90}{50}{red!20};
\hgfill{90}{70}{red!25};

\hgfill{95}{100}{red!30};
\hgfill{100}{110}{red!30};
\hgfill{110}{130}{red!30};
\hgfill{130}{170}{red!30};
\hgfill{170}{230}{red!30};

\hgfill{85}{80}{red!30};
\hgfill{80}{70}{red!30};
\hgfill{70}{50}{red!30};
\hgfill{50}{10}{red!30};
\hgfill{10}{-50}{red!30};

\hgfill{230}{310}{red!30};

\hgline{red}{95}{100};
\hgline{red}{100}{110};
\hgline{red}{110}{130};
\hgline{red}{130}{170};
\hgline{red}{170}{230};

\hgline{red}{85}{80};
\hgline{red}{80}{70};
\hgline{red}{70}{50};
\hgline{red}{50}{10};
\hgline{red}{10}{-50};

\hgline{red}{230}{310};

\hgline{red}{90}{95};
\hgline{red}{90}{100};
\hgline{red}{90}{110};
\hgline{red}{90}{130};
\hgline{red}{90}{170};
\hgline{red}{90}{230};
\hgline{red}{90}{-50};
\hgline{red}{90}{10};
\hgline{red}{90}{50};
\hgline{red}{90}{70};
\hgline{red}{90}{80};
\hgline{red}{90}{85};

\end{scope}

\foreach \theta in {90, 95,100,110,130,170,230,310,10,50,70,80,85} \fill (\theta:\ra) circle (2pt);
\node at (90:1.15*\ra) {$s$}; 
\node at (230:1.15*\ra) {$t_0$}; 
\node at (310:1.15*\ra) {$t_1$}; 
\end{tikzpicture} \\
$\PStabL{\Lambda_{2,1,0}}$ & $\PStabQ{\Lambda_{2,1,0}}$ \\
&\\
\begin{tikzpicture}
\draw[thick, dashed, fill=red!10] (0,0) circle (\ra);

\begin{scope}
\clip (0,0) circle (\ra);
\hgfill{95}{100}{red!20};
\hgfill{100}{110}{red!20};
\hgfill{110}{130}{red!20};
\hgfill{130}{170}{red!20};
\hgfill{170}{230}{red!20};

\hgfill{85}{80}{red!20};
\hgfill{80}{70}{red!20};
\hgfill{70}{50}{red!20};
\hgfill{50}{10}{red!20};
\hgfill{10}{-50}{red!20};

\hgfill{230}{310}{red!20};

\hgline{red}{95}{100};
\hgline{red}{100}{110};
\hgline{red}{110}{130};
\hgline{red}{130}{170};
\hgline{red}{170}{230};

\hgline{red}{85}{80};
\hgline{red}{80}{70};
\hgline{red}{70}{50};
\hgline{red}{50}{10};
\hgline{red}{10}{-50};

\hgline{red}{230}{310};

\foreach \theta in {230,310}  \draw[thick, fill=white] (\theta:\ra)  arc (\theta:\theta+360:0.185*\ra);
\foreach \theta in {170,10}  \draw[thick, fill=white] (\theta:\ra)  arc (\theta:\theta+360:0.15*\ra);
\foreach \theta in {130,50}  \draw[thick, fill=white] (\theta:\ra)  arc (\theta:\theta+360:0.1*\ra);
\foreach \theta in {110,70}  \draw[thick, fill=white] (\theta:\ra)  arc (\theta:\theta+360:0.075*\ra);
\foreach \theta in {100,80}  \draw[thick, fill=white] (\theta:\ra)  arc (\theta:\theta+360:0.05*\ra);
\end{scope}

\foreach \theta in {100, 110, 130,170,230,310, 10,50,70,80} \fill[white] (\theta:\ra)  circle (1pt);
\draw[dotted, fill=white] (90:\ra)  circle (0.05*\ra);
\node at (90:1.15*\ra) {$\mathcal{O}_x$}; 
\node at (230:1.2*\ra) {$\mathcal{O}$}; 
\node at (310:1.2*\ra) {$\mathcal{O}(1)$}; 
\end{tikzpicture} & \begin{tikzpicture}
\draw[thick, dashed, fill=red!10] (0,0) circle (\ra);

\begin{scope}
\clip (0,0) circle (\ra);
\hgfill{95}{100}{red!20};
\hgfill{100}{110}{red!20};
\hgfill{110}{130}{red!20};
\hgfill{130}{170}{red!20};
\hgfill{170}{230}{red!20};

\hgfill{85}{80}{red!20};
\hgfill{80}{70}{red!20};
\hgfill{70}{50}{red!20};
\hgfill{50}{10}{red!20};
\hgfill{10}{-50}{red!20};

\hgfill{230}{310}{red!20};

\hgline{red}{95}{100};
\hgline{red}{100}{110};
\hgline{red}{110}{130};
\hgline{red}{130}{170};
\hgline{red}{170}{230};

\hgline{red}{85}{80};
\hgline{red}{80}{70};
\hgline{red}{70}{50};
\hgline{red}{50}{10};
\hgline{red}{10}{-50};

\hgline{red}{230}{310};

\end{scope}

\draw[dotted, fill=white] (90:\ra)  circle (0.05*\ra);
\foreach \theta in {95,100,110,130,170,230,310,10,50,70,80,85} \fill (\theta:\ra) circle (2pt);
\node at (90:1.15*\ra) {$\mathcal{O}_x$}; 
\node at (230:1.175*\ra) {$\mathcal{O}$}; 
\node at (310:1.175*\ra) {$\mathcal{O}(1)$}; 
\end{tikzpicture} \\
$\PStabL{\mathbb{P}^1}$ & $\PStabQ{\mathbb{P}^1}$ \\
&\\
\begin{tikzpicture}
\draw[thick, dashed, fill=red!10] (0,0) circle (\ra);

\begin{scope}
\clip (0,0) circle (\ra);
\foreach \theta in {0,240}  \hgfill{\theta}{\theta+120}{red!20};

\foreach \theta in {0,240} \hgline{red}{\theta}{\theta+120};

\foreach \theta in {120,240} \draw[thick, dashed, fill=white] (\theta:\ra)  arc (\theta:\theta+360:0.15*\ra);
\draw[thick, fill=white] (0:\ra)  arc (0:3600:0.15*\ra);
\draw[thick] (120:\ra)  arc (120:-50:0.15*\ra);
\draw[thick] (240:\ra)  arc (240:410:0.15*\ra);
\end{scope}

\foreach \theta in {0,120,240} \fill[white] (\theta:\ra)  circle (1pt);
\node at (0:1.15*\ra) {$t$}; 
\node at (120:1.15*\ra) {$s$}; 
\node at (240:1.15*\ra) {$e$}; 
\end{tikzpicture} &\begin{tikzpicture}
\draw[thick, dashed, fill=red!10] (0,0) circle (\ra);

\begin{scope}
\clip (0,0) circle (\ra);

\foreach \theta in {0,240}  \hgfill{\theta}{\theta+120}{red!20};

\foreach \theta in {0,240} \hgline{red}{\theta}{\theta+119};

\foreach \theta in {120,240} \fill[red!10] (\theta:\ra)  circle (5pt);

\end{scope}

\foreach \theta in {0,120,240} \fill (\theta:\ra)  circle (2pt);
\node at (0:1.15*\ra) {$t$}; 
\node at (120:1.15*\ra) {$s$}; 
\node at (240:1.15*\ra) {$e$}; 
\end{tikzpicture} \\
$\PStabL{Q}$ & $\PStabQ{Q}$
\end{tabular}
\end{center}
\caption{Illustrations of $\PStabL{\CC}$ and $\PStabQ{\CC}$ for $\CC$ respectively the derived category of representations of the quiver $\Lambda_{2,1,0}$, coherent sheaves on $\mathbb{P}^1$ (equivalently the Kronecker quiver), and nilpotent representations of the quiver $Q$ with two vertices with a loop at one vertex and an arrow from the vertex with a loop to the other. See \cref{fig:A2} for explanation. In the second case the skyscrapers $\mathcal{O}_x$ are stable in the central chamber but never massless. In the third case there are restrictions on the phases of massless objects: $\phi(s)< \phi(t)$ when $s$ is massless and $\phi(e)>\phi(t)+1$ when $e$ is massless. See \cref{subsec:discrete,,subsec:projective line,,subsec:nilrep} for details.}
\label{fig:projective line}
\label{fig:lambda}
\label{fig:nilpotent}
\end{figure}

\subsection{A discrete derived category}
\label{subsec:discrete}
Let $\Lambda_{2,1,0}$ be the bound quiver with two vertices, one arrow in each direction, and the zero relation given by the composite of these arrows. The (principal component of the) stability space $\Stab{\Lambda_{2,1,0}}\cong \C^2$ was first described in \cite{Woolf10}, see also \cite{BPP16,QW18} for proofs that the stability space is connected and generalisations to other discrete derived categories. 

Let $s$ be the simple representation at the vertex with no relation, and $t=t_0$ the other simple representation. The object $s$ is $2$-spherical and $t_0$ is exceptional. Since $\twist_{s}(s)=s[-1]$ the twist $\twist_{s}$ generates an infinite cyclic subgroup of automorphisms. Set $t_n =\twist_{s}^n(t_0)$. There are unique non-split extensions $0\to s \to t_{-1} \to t_0 \to 0$ and $0\to t_0 \to t_1 \to s \to 0$. In particular there is a stability condition in which $s$, $t_{-1}$ and $t_0$ are the only stable objects up to shift. This lives in a chamber of $\Stab{\Lambda_{2,1,0}}$ with three walls. Crossing the wall where $t_{-1}$ destabilises we enter a chamber in which $t_1$ is stable. As in the previous example this chamber is the image of the initial one under the action of the twist $\twist_{s}$, and similarly crossing the wall where $t_0$ destabilises we enter the image of the initial chamber under $\twist_{s}^{-1}$. However, if we cross the wall where the spherical object $s$ destabilises then we enter a chamber in which only $t_0$ and $t_1$ are stable. Unlike the previous chambers which consist of a single cell, this is the union of a sequence of cells, and cell-walls upon which the phases of $t_0$ and $t_1[n]$ agree for $n\in \N$. In summary, $\Stab{\Lambda_{2,1,0}}$ has one free orbit of chambers with three stable objects (one spherical and two exceptional) and one free orbit of chambers with two stable objects (both exceptional) under the action generated by $\twist_{s}$. 

Each heart of a stability condition in $\Stab{\Lambda_{2,1,0}}$ is algebraic, see \cite[Lems.~7.4 and 7.5]{BPP17}, so by \cref{prop:massless stables = algebraic simples} the boundary strata correspond to the simple objects of these hearts (up to isomorphism and shift), \ie there is one stratum where each of $s$ and $t_n$ for $n\in \Z$ is massless. It follows from \cref{prop:lax closure} that the massless object of each boundary stratum can take any phase in $\R$. 

The twist $\twist_{s}$ preserves the stratum labelled by $s$ and sends that labelled by $t_n$ to that labelled by $t_{n+1}$. Its square $\twist_{s}^2$ acts trivially on the Grothendieck group, so the strata labelled by the $t_n$ map to one of two points in charge space according to whether $n$ is even or odd. The space $\PStabQ{\Lambda_{2,1,0}}$ is obtained by collapsing each boundary stratum to a point. The twist $\twist_{s}$ acts by an isometry without fixed points on $\PStab{\Lambda_{2,1,0}} \cong \C$ and therefore acts by a translation of $\C$. This is illustrated in \cref{fig:lambda}.

\subsection{The projective line}
\label{subsec:projective line}
The stability space $\PStab{\PP^1}\cong \C$ was first described in \cite{Okada06}, see also \cite{Macri07}. The primordial \emph{geometric} stability condition has heart the coherent sheaves, with stable objects the skyscrapers $\mathcal{O}_x$ for $x\in \PP^1$ and line bundles $\mathcal{O}(k)$ for $k\in \Z$. The chamber containing it has a countable sequence of walls corresponding to the Kronecker hearts. Crossing one of these walls we enter a chamber in which only $\mathcal{O}(k)$ and $\mathcal{O}(k+1)$ are stable, the skyscrapers and all other line bundles destabilise. This chamber is the union of a sequence of cells separated by cell-walls on which the phases of $\mathcal{O}(k+1)$ and $\mathcal{O}(k)[n]$ for $n>0$ agree.

By \cref{prop:massless stables = algebraic simples} the boundary strata of $\PStabL{\mathbb{P}^1}$ correspond to the simple objects (up to isomorphism and shift) in the Kronecker and semisimple hearts as these and their irreducible tilts are algebraic. Thus there is one boundary stratum for each line bundle $\mathcal{O}(k)$. An example lax stability condition of the stratum corresponding to $\mathcal{O}$ is given by the slicing $P_b$ of \cref{ex:slicings} with charge map $Z(\mathcal{O})=0$ and $Z(\mathcal{O}(1)) = i$.

These boundary strata accumulate over the charge annihilating the skyscrapers. However, \cref{prop:massless stables = algebraic simples} shows there is no corresponding boundary stratum in $\PStabL{\PP^1}$ because the skyscrapers are not simple in any algebraic heart and so cannot become massless --- see also \cref{ex:lax supported versus weak}. We indicate this omitted stratum by a dotted circle in \cref{fig:projective line} and label it by $\mathcal{O}_x$ to indicate that the skyscrapers are stable in the adjacent chamber. The space $\PStabQ{\mathbb{P}^1}$ is obtained by collapsing each boundary component to a point.

The infinite cyclic group generated by the automorphism $-\otimes \mathcal{O}(1)$ preserves the chamber containing the geometric stability condition (but does not fix any stability condition in this chamber) and acts freely and transitively on the chambers in which only two objects are stable. It also acts freely and transitively on the boundary points. It follows that it acts by a translation on $\PStab{\PP^1}\cong \C$. See \cref{fig:projective line} for an illustration.
 
Superficially, this closely resembles the previous example. However, there are several important (and inter-related) differences. In this case there is a chamber bounded by a countably infinite family of walls; there are stable objects whose mass does not vanish; the images of the boundary points accumulate in charge space.

\subsection{Nilpotent representations}
\label{subsec:nilrep}

Let $Q$ be the quiver with two vertices connected by an arrow and with a loop:
$\begin{tikzcd}[column sep = small, cramped]
\arrow[loop left] 1 \ar[r] & 2
\end{tikzcd}$.
The category $\nilrep(\kk Q)$ of nilpotent representations of the quiver $Q$ is an algebraic hereditary abelian category with two simple objects $s$ and $t$, where $s$ has a self extension and $t$ is exceptional. Let $\CC = \Db(\nilrep(\kk Q))$. We use $[s], [t]$ as a basis for $\Lambda=K(\CC)$.
For more details, see \cite[p428]{KY14} and \cite[Ex.~2.9]{BCPW24}.

The indecomposable objects of $\nilrep(\kk Q)$ are $M(n_0; n_1, \ldots, n_k)$ with $0 \leq k-1 \leq n_0 \leq k$, $n_1 \geq 0$ and $n_j \geq 1$ for each $j > 1$, in which $n_0$ is the number of occurrences of $t$ in a composition series, $n_1$ is the number of occurrences of $s$ above the first $t$ in the composition series, $n_2$ is the number of occurrences of $s$ above the second $t$ and so on, for example:
\[
  M(0;1) = s, \quad M(1;0) = t,
  \quad
  M(1;2) = \raisebox{-1.7ex}{%
             \begin{tikzpicture}[every node/.style={scale=0.7, font=\sffamily}]
               \node at (0.0,-0.44) {t}; \node at (0,-0.22) {s}; \node at (0,0) {s};
             \end{tikzpicture}},
  \quad
  M(2;2,1) = \raisebox{-2.4ex}{%
             \begin{tikzpicture}[every node/.style={scale=0.7, font=\sffamily}]
              \node at (0,0) {s}; \node at (0,-0.22) {s}; \node at (-0.2,-0.44) {t}; \node at (0.2,-0.44) {s}; \node at (0.2,-0.66) {t};
            \end{tikzpicture}},
\quad
  M(1;2,1) = \raisebox{-2.4ex}{%
             \begin{tikzpicture}[every node/.style={scale=0.7, font=\sffamily}]
              \node at (0,0) {s}; \node at (0,-0.22) {s}; \node at (-0.2,-0.44) {t}; \node at (0.2,-0.44) {s};
            \end{tikzpicture}}%
            .              
\]
Since $\nilrep(\kk Q)$ is algebraic the stability function $Z\colon \Lambda \to \C$ defined by $Z(s) = -1$ and $Z(t) = i$ determines a stability condition. If $M \neq t$ is an indecomposable nilpotent representation then $Z(M)$ lies in the closed region between the ray $e^{3i\pi/4} \R_{\geq 0}$ and the negative real axis. One can check that $M=M(n_0; n_1, \ldots, n_k)$ is semistable if, and only if, either $n_0=0$ or $n_0=k>0$ and
\[
\frac{n_1+\cdots+n_j}{j} \geq \frac{n_1+\cdots +n_k}{k} \qquad \text{for}\ 1\leq j<k;
\]
moreover $M$ is stable if, and only if, $M=s$ or $n_0=k>0$ and all the inequalities above are strict. In particular $M$ is semistable when $n_0=k>0$ and $n_1\geq \cdots\geq n_k$ so that we can choose a semistable $M$ with $\arg Z(M) = \pi \phi$ for any $\phi = p/q \in [3/4,1] \cap \Q$ where $p,q\in\N$.

It follows that the chamber containing this stability condition has two walls, one where $\phi(s)=\phi(t)$ corresponding to the heart $\nilrep(\kk Q)$ and one where $\phi(e)=\phi(t)+1$ corresponding to an algebraic heart with simple objects $e\coloneqq M(1;1)$ and $t[1]$. This latter heart is equivalent to nilpotent representations of the quiver 
$\begin{tikzcd}[column sep = small, cramped]
\arrow[loop left] 1 & 2 \ar[l]
\end{tikzcd}$.
Crossing the first wall one enters a chamber in which only $s$ and $t$ are stable; similarly crossing the second one enters a chamber in which only $e$ and $t$ are stable. Note that $t$ is stable at all points. The central chamber in which there are countably many stable objects is bounded by the line $\phi(s)=\phi(e)$. (Note that the existence of this boundary implies that $\PStab{Q}$ is a hyperbolic Riemann surface.)

The projective lax closure $\PStabL{Q}$ has three boundary strata where respectively $s$, $t$ and $e$ become massless. The $s$-massless stratum is
\[
 \{ (P,Z) \mid s, t \ \text{stable},\ Z(s)=0 \neq Z(t),\ \phi(s)<\phi(t) \}
 \]
and the $e$-massless stratum is $\{ (P,Z)\mid e, t \text{ stable},\  Z(e)=0\neq Z(t),\ \phi(e)>\phi(t)+1 \}$. There are no constraints on the phase of $t$ when it is massless. In particular the closure of each wall meets the $t$-massless stratum, but neither closure meets either of the other two boundary strata.

The projective quotient stability space $\PStabQ{Q}$ also has three boundary strata given by the points at which $s$, $e$ and $t$ become massless. The point where $t$ is massless is in the closure of both walls, those where $s$ and $e$ are massless are in the closure of neither.

As announced in \cref{rmk:lax support} this example illustrates why we must test massive $\delta$-slim objects, rather than only massive stable objects, in the lax support property to obtain an open condition on lax pre-stability conditions with the same massless category. To see why, consider the set of lax pre-stability conditions in which $s$ and $t$ are stable and $s$ is massless. These satisfy lax support when $\phi(s)<\phi(t)$, and therefore also satisfy the weaker condition that there is a support constant for all massive stable objects. When $\phi(s)=\phi(t)$ the slicing has a single non-zero slice $\nilrep(\kk Q)$ and $t$ is the only massive stable object. In particular, there is trivially a support constant for massive stable objects. However, when $\phi(s)>\phi(t)$ 
the only semistable indecomposable objects are $t = M(1;0)$, $M(0;n)$ and $M(n_0;n_1,\ldots,n_{n_0})$. 
In particular, each $M(1;n)$ is massive, stable and of the same phase; the $M(1;n)$ all have the same mass but $\norm{M(1;n)}\to \infty$ as $n\to \infty$ so that there is no support constant. In summary, there is a support constant for massive stable objects if and only if $\phi(s)\leq \phi(t)$ so that this weaker notion of support does not propagate.

\section{Comparisons with other constructions}
\label{sec:comparisons} 

\noindent
We list other extensions of stability spaces occurring in the literature and, if possible, compare with our constructions.

\subsection{Very weak stability conditions of Bayer, Macrì, Stellari and Piyaratne, Toda}
\label{subsec:very weak}

Almost identical generalisations of Bridgeland's stability conditions were introduced in \cite[App.~B]{BMS16} by Arend Bayer, Emanuele Macr\`\i\ and Paolo Stellari and in \cite[\S2.1]{PT} by Dulip Piyaratne and Yukinobu Toda. Following the latter, a \defn{very weak pre-stability condition} on a triangulated category $\CC$ with a homomorphism $v\colon K(\CC) \to \Lambda$ onto a lattice is a pair $(Z,\HH)$ where $\HH$ is a bounded heart and $Z \in \Hom{\Lambda}{\CC}$ such that
\begin{enumerate}
\item if $h\in\HH$ then $Z(h) \in \U \cup \R_{\leq0}$ where $\U \subset \C$ is the strict upper half-plane, and
\item HN filtrations exist in $\HH$ for the slope function $\mu \coloneqq - \tfrac{\Re Z}{\Im Z} \colon K(\HH) \to \R \cup \{\infty\}$ with $\mu(h)=\infty$ when $\Im Z(h)=0$.
\end{enumerate}
The authors point out that $\{h\in\HH \mid Z(h)=0\}$ is a Serre subcategory of $\HH$ and that $Z$ induces a stability function on the abelian quotient category with respect to the appropriate quotient lattice. A very weak pre-stability condition $(Z,\HH)$ induces a slicing $P$, and in fact $(P,Z)$ is a lax pre-stability condition as in \cref{def:lax pre-stability condition} such that all massless objects are in $P(1)$. 

The article \cite{PT} defines a \defn{very weak stability condition} to be a pair $(Z,\HH)$ as above that moreover satisfies the support property in the vein of Kontsevich--Soibelman \cite{KS08} which is equivalent to the sypport property for the quotient pre-stability condition. This is a weaker condition than our lax support property. They consider the set $\Stabvw{\CC}$ of very weak stability conditions, topologised as a subset of $\Hom{\Lambda}{\CC} \times \Slice{\CC}$. They include points we do not because their support property is weaker, and we include points they do not because we allow more general slicings on the massless subcategory.

The discussion in \cite{BMS16} is almost identical to that of \cite{PT}, with the following differences: \cite{BMS16} restrict to bounded derived categories of smooth, projective threefolds and their charge function maps the heart to the upper half-plane $\U \cup \R_{\leq0}$ rotated by $-\pi/2$. They topologise the set of very weak stability conditions by giving it the coarsest topology such that the functions $\sigma \mapsto Z(c)$ and $\sigma \mapsto \phi_\sigma^\pm(c)$ are continuous for all $c\in\CC$.

These generalised stability conditions are used in both \cite{BMS16} and \cite{PT} as technical tools to prove the existence of strict stability conditions on the derived categories of certain varieties. The weaker notions are used to provide a framework in which tilt-stability always deforms. The above articles, as well as the one discussed next, are about generalising the classical Bogomolov--Giesker inequality for semistable vector bundles to settings involving stability conditions.

\subsection*{Weak stability conditions of Collins, Lo, Shi, Yau}
A recent article in this line is \cite{CLSY} by Tristan Collins, Jason Lo, Yun Shi and Shing-Tung Yau. They define a \defn{weak stability condition} as a triple $\sigma = (Z,\HH,\{\phi(h)\}_{h\in\ker(Z)\cap\HH})$ with $Z$ and $\HH$ as before and additionally $\phi \colon \ker(Z)\cap\HH \to (0,1]$, such that the function $\phi$ satisfies the weak seesaw property on $\ker(Z)\cap\HH$ (an exact sequence $0 \to h_1 \to h \to h_2 \to 0$ in $\ker(Z)\cap\HH$ implies $\phi(h_1) \geq \phi(h) \geq \phi(h_2)$ or $\phi(h_1) \leq \phi(h) \leq \phi(h_2)$) and the HN property.
In other words, massless objects $h$ in the heart $\HH$ get assigned phases $\phi(h) \in (0,1]$ in a compatible fashion.
\cite[Prop.~3.5]{CLSY} shows that this datum can be repackaged as $(Z,P,\{\phi(c)\}_{c\in\ker(Z)\cap P(0,1]})$ where $P$ is a slicing.

A lax pre-stability condition $(Z,P)$ induces a weak stability condition but not necessarily the other way around because \cite{CLSY} does not assume the slicing to be locally finite and disregards the support property. The goal of \cite{CLSY} is to prove the Bridgeland stability of certain objects in algebro-geometric settings, and weak stability conditions occur as limits of Bridgeland stability conditions (so that massless objects obtain phases as limits of phases in strict stability conditions).

\subsection{Bapat, Deopurkar and Licata's `Thurston compactification'}
\label{subsec:Bapat et al comparison}

In \cite{BDL20}, the authors categorify Thurston's compactification of Teichmüller space.
For a Hom-finite $\kk$-linear triangulated category $\CC$, they consider the map
\[
m\colon \PStab{\CC} \to \PP(\R^S), \quad \sigma\cdot \C \mapsto [m_\sigma(c) \mid c\in S ]
\]
where $S$ is the set of objects of $\CC$ which are semistable in some stability condition and define the Thurston compactification to be the closure $\overline{M(\CC)}$ of its image $M(\CC)$. They also consider analogues for other subsets $S$ of objects and $q$-deformed versions which we will not discuss here.

When $\PP(\R^S)$ is given the topology induced from the product topology on $\R^S$ the map $m$ is continuous, for instance by \cref{prop:mass and phase continuity}. Automorphisms of $\CC$ act on $\PP(\R^S)$ by pre-composing a real-valued function on the objects of $S$ with the inverse automorphism. 

The map $m$ is equivariant for $\Aaut{\Lambda}{\CC}$ because $m_{\alpha \cdot \sigma(c)} = m_\sigma(\alpha^{-1}(c))$.

The space $\overline{M(\CC)}$ is compact when $S$ contains a classical generator of $\CC$ \cite[Prop.~4.1]{BDL20} and the map $m$ is injective when $\CC = \Db(\Ginzburg Q)$ is the $2$-Calabi--Yau category associated to a quiver $Q$ \cite[Prop.~6.14]{BDL20}. When $Q=A_2$ they show that $M(\CC)$ is homeomorphic to $\Stab{\CC}$ and that $\overline{M(\CC)}$ is homeomorphic to a closed Euclidean ball \cite[Prop.~7.23]{BDL20}.

Motivated by the description of boundary points of Thurston's compactification as functionals given by unsigned intersections with closed curves, they show that for a spherical object $s$,
\[
\overline{\mathrm{hom}}(s) \coloneqq \bigg[ \sum_{n\in \Z} \dim_\kk \Homm{\CC}{s}{c[n]} \bigm| c\in S\bigg] \in \PP(\R^S)
\]
defines a point in the boundary $\partial \overline{M(\CC)}$ \cite[Cor.~4.13]{BDL20}. When $\CC=\Db(\Ginzburg A_2)$ these functionals form a dense subset of the boundary. 

The map $m$ extends to $m \colon \PStabL{\CC} \to \PP(\R^S)$ with the same definition; the extension is also continuous by \cref{prop:mass and phase continuity}. Because the masses depend only on the associated quotient stability condition, we get an induced map $m\colon \PStabQ{\CC} \to \PP(\R^S)$.
By construction the images of both $\PStabL{\CC}$ and $\PStabQ{\CC}$ are contained within $\overline{M(\CC)}$.

\begin{example}
Let $\CC = \Db(A_2)$. By \cref{subsec:A2}, the semistable objects of a point in $\Stab{A_2}$ are, up to shift, two or all three objects from $\mathcal{S} \coloneqq \{s,e,t\}$ where $s$ and $t$ are the two simple representations and $e$ the extension between them. It follows that the masses of $s$, $e$, and $t$ determine the masses of all objects. This remains true for $\StabL{A_2}$. So it suffices to consider
\[
\PStab{A_2} \to \PP(\R^3), \quad \sigma \mapsto [m_\sigma(s) : m_\sigma(e) : m_\sigma(t)].
\]
The image is cut out by the inequalities $x_0,x_1,x_2>0$ (the masses are strictly positive) together with the cyclic permutations of the inequality $x_0-x_1+x_2\leq 0$ (the mass of an extension is bounded by the sum of the masses of its factors). If we normalise so that $x_0+x_1+x_2=1$ then the image can be viewed as the shaded triangle, with vertices omitted, in the $2$-simplex in \cref{fig:A2 Thurston compactification}. In particular we see that the map is not injective because when, for example, $e$ is unstable the masses of $s$ and $t$ do not suffice to determine their phases. The three boundary strata in $\PStabL{A_2}$ where the masses of $s$, $e$ and $t$ respectively vanish are mapped to the three omitted vertices. So in this example, $\PStabQ{A_2}$ surjects onto Bapat, Deopurkar and Licata's compactification, and the boundaries coincide. The boundary points correspond precisely to the functionals $\overline{\mathrm{hom}}(c)$ for $c\in \mathcal{S}$.

\begin{figure}
\begin{tikzpicture}[scale=1.0]
\draw[dashed] (90:3)--(-30:3)--(-150:3)--cycle;
\draw[red, fill=red!20] (30:1.5)--(-90:1.5)--(-210:1.5)--cycle;
\draw[fill] (30:1.5) circle (2pt);
\draw[fill] (-90:1.5) circle (2pt);
\draw[fill] (-210:1.5) circle (2pt);
\node at (30:2.5) {$x_2=0$};
\node at (-90:2) {$x_0=0$};
\node at (-210:2.5) {$x_1=0$};
\end{tikzpicture}
\caption{The image of $\PStab{A_2} \to \Delta^2$, $\sigma \mapsto ( \lambda m_\sigma(s), \lambda m_\sigma(e), \lambda m_\sigma(t) )$ where $\lambda = m_\sigma(s)+ m_\sigma(e)+ m_\sigma(t)$ is shaded red. The chamber of the stability space in which $s$, $e$ and $t$ are stable is mapped homeomorphically to the interior. The chamber in which only $s$ and $t$ are stable, together with its bounding wall, are projected down onto the edge $x_0-x_1+x_2 = 0$, and similarly for the other two chambers. The three boundary points in $\PStabQ{A_2}$ where the masses of $s$, $e$ and $t$ respectively vanish are mapped to the three black vertices.}
\label{fig:A2 Thurston compactification}
\end{figure}
\end{example}

\begin{example}
Let $\CC = \Db(\Ginzburg A_2)$ and let $\mathcal{S}$ be the set of equivalence classes of spherical objects in $\Db(\Ginzburg A_2)$ up to isomorphism and shift. Then by \cite[Prop.~7.8]{BDL20} the map 
\[
m \colon \PStab{\Ginzburg A_2} \to \PP(\R^\mathcal{S}), \quad \sigma \mapsto [m_\sigma(c) \mid c\in \mathcal{S}]
\]
is a homeomorphism onto its image which we denote $M(\Ginzburg A_2)$. After choosing an element $s\in \mathcal{S}$ to map to $[1:0]$, the action of the Artin--Tits braid group induces a bijection $\mathcal{S} \cong \Z^2 / \pm$. Using this identification the map 
$
\mathcal{S} \to \PP(\R^\mathcal{S})$, $s \mapsto [ \overline{\mathrm{hom}}(s)(c) \mid c\in \mathcal{S} ]
$
 extends uniquely to a homeomorphism from $\PP(\R^2)$ onto the boundary of $M(\Ginzburg A_2)$ in $\PP(\R^S)$ by \cite[Props.~7.16 and 7.21]{BDL20}. The closure $\overline{M(\Ginzburg A_2)}$ is homeomorphic to a closed disk. The functional $\overline{\mathrm{hom}}(s)$ is the unique fixed point of the spherical twist $\twist_{s}$.
 
Now consider the extension $m\colon \PStabQ{\Ginzburg A_2} \to \overline{M(\Ginzburg A_2)}$. Recall from \cref{subsec:A2} and \cref{fig:A2} that the projective quotient stability space $\PStabQ{\Ginzburg A_2}$ contains one boundary point for each $s\in\mathcal{S}$ at which the objects in the class $s$ become massless. This boundary point is fixed by $\twist_{s}$ because $\twist_{s}$ acts by a shift on $s$. The equivariance of $m$ implies that this point is mapped to $\overline{\mathrm{hom}}(s)$. (At first sight this looks odd because $\sum_{n\in \Z} \dim_\kk\Hom{s}{s[n]}=2$ is non-zero. However we are working in an infinite-dimensional projective space 
\[
\lim_{n\to \infty} \frac{ m_\sigma(\twist^n_{s}(c)) } {n} = m_\sigma(s) \, \overline{\mathrm{hom}}(s)(c)
\]
for any $c\in \mathcal{S}$ and stability condition $\sigma$ in which $s$ is stable by \cite[Thm.~4.9]{BDL20}. Since $\twist_{s}$ fixes $s$ up to a shift, this implies that $\overline{\mathrm{hom}}(s)(s)=0$ as expected.) We conclude that
$
m \colon \PStabQ{\Ginzburg A_2} \to \overline{M(\Ginzburg A_2)}
$
is a continuous embedding, restricting to a homeomorphism between the interiors, and whose image is dense in the boundary. This provides a modular interpretation of the boundary points $\overline{\mathrm{hom}}(s)$ as the boundary points of $\PStabQ{\Ginzburg A_2}$.
\end{example}

\subsection{Bolognese's metric completion}
\label{subsec:Bolognese comparison}

In \cite{Bolognese23} Bolognese constructs an alternative extension of $\Stab{\CC}$ using a metric completion. She assumes that $\cm \colon \Stab{\CC} \to \Hom{\Lambda}{\C}$ is a cover of the complement of a locally finite union of submanifolds in $\Hom{\Lambda}{\C}$. Fixing an inner product on the $\R$-vector space $\Hom{\Lambda}{\C}$, she equips $\Stab{\CC}$ with the geodesic metric $d_B$ induced from the pullback of the associated metric. Since $\Stab{\CC}$ is locally homeomorphic to $\Hom{\Lambda}{\C}$ with its norm topology, this metric induces the usual topology on $\Stab{\CC}$.

A Cauchy sequence $(\sigma_n)$ in $(\Stab{\CC}, d_B)$ is called \defn{$\cm$-local} if it eventually lies in an open subset $U\subset \Stab{\CC}$ homeomorphic to its image via $\cm$. It has the \defn{limiting support property} if $\liminf\limits_{n\to \infty} C_n >0$ where
$C_n \coloneqq \inf\{ K>0 : \forall c\in P_n(\phi), \lim\limits_{m\to \infty} Z_m(c) \neq 0 \implies |Z_n(c)| > K\norm{c} \}$.

%

A $\cm$-local Cauchy sequence determines a thick subcategory of those objects which become massless in the limit, and a stability condition on the quotient category \cite[Prop.~4.2 and Thm.~6.1]{Bolognese23}. Moreover, $\cm$-local Cauchy sequences are equivalent if and only if they determine the same massless subcategory and stability condition on the quotient.

Denote by $\hatstab{\CC}$ the subspace of the metric completion of $(\Stab{\CC}, d_B)$ of equivalence classes of $\cm$-local Cauchy sequences with the limiting support property.
%
%
One can check that a $\cm$-local Cauchy sequence $(\sigma_n)$ converges to a lax pre-stability condition $\sigma$ in the product metric on $\Slice{\CC}\times \Hom{\Lambda}{\C}$.
This provides a continuous map $\hatstab{\CC} \to \Slice{\CC}\times \Hom{\Lambda}{\C}$. 

Let $\Bstab{\CC}$ be the intersection of the image of this map with the closure of $\Stab{\CC}$. To compare Bolognese's construction with ours, we need to compare this with $\StabL{\CC}$. Unfortunately the relationship is not obvious since our lax support property is phrased in terms of massive stable objects in some $P(\phi-\delta,\phi+\delta)$ and Bolognese's limiting support property is phrased in terms of semistable objects in some $P_n(\phi)$ which remain massive in the limit. Since the HN filtration of a $\sigma_n$-semistable object with respect to $\sigma$ may contain massless objects, and similarly the other way round, there is no direct argument relating the two notions of support. 

If $\Bstab{\CC} = \StabL{\CC}$ then there should be a homeomorphism $\hatstab{\CC} \cong \StabQ{\CC}$.
More generally an inclusion in either direction should extend to a map between $\hatstab{\CC}$ and $\StabQ{\CC}$ in the corresponding direction.

\subsection{Halpern-Leistner and Robotis' augmented stability conditions}
\label{Halpern-Leistner--Robotis}

In \cite{HR25}, the authors introduce an extension of $\Stab{\CC}$ through semi-orthogonal decompositions.
They first observe \cite[Prop.~2.3]{HR25} that a pre-stability condition $\sigma = (P,Z)$ on $\CC$ is uniquely determined by an aggregated mass and slope function
\[ \ell_\sigma \colon \CC \setminus 0 \to \C, \quad
   \ell_\sigma(E) = \log(m_\sigma(E)) + \frac{i\pi}{m_\sigma(E)} \sum_{\theta\in\R} \theta \, \abss{Z(H^\theta_P(E))} \]
where $m_\sigma(E)$ is the sum of masses of $\sigma$-semistable factors $H_P^\theta(E)$ of $E$. This function is compatible with the right $\C$-action, $\ell_{\sigma\cdot z} = \ell_\sigma + z$, and so descends to $\PStab{\CC} = \Stab{\CC} / \C$.

\newcommand{\HR}[3]{\langle #1 | #2\rangle_{#3}}
\newcommand{\HRroot}{\text{root}}
\newcommand{\AStab}[1]{\mathcal{A}\mathrm{Stab}(#1)}
\newcommand{\AaStab}[1]{\mathcal{A}\mathrm{Stab}^\mathrm{lax}(#1)}
They consider $\PStab{\CC}$ as the corresponding set of functions $\ell\colon \CC \setminus 0 \to \PP^1\setminus\{p_\infty\}$ where $p_\infty \in \PP^1 = \C\PP^1$ is a marked point, and write $\sigma = \HR{\CC}{\ell}{\PP^1}$. Skipping many details, the marked projective line is replaced by a \defn{multiscale line} \cite[Def.~3.1]{HR25}, which (ignoring additional data) is a projective curve $\Sigma$ of arithmetic genus 0, \ie a nodal tree of projective lines, together with a marked smooth point $p_\infty \in \Sigma$. This gives $\Sigma$ the combinatorial structure of a rooted tree. Let $\Sigma_\HRroot$ denote the irreducible component containing $p_\infty$.

A \defn{multiscale decomposition} \cite[Def.~3.6]{HR25} of $\CC$ consists of a multiscale line $\Sigma$ together with a non-zero thick subcategory $\CC_{\leq v} \subseteq \CC$ for each leaf (terminal vertex) $v$ of the rooted tree $\Sigma$, satisfying a list of conditions. If the rooted tree has height 1, \ie all non-root irreducible components $\Sigma_v$ meet $\Sigma_\HRroot$, and the intersection points $\Sigma_v \cap \Sigma_\HRroot$ have pairwise different imaginary values in $\C = \Sigma_\HRroot \setminus \{p_\infty\}$, this specialises to a semi-orthogonal decomposition $\CC = \clext{\CC_{\leq v_1},\ldots,\CC_{\leq v_k}}$. If instead all $\Sigma_v \cap \Sigma_\HRroot$ have the same imaginary value then the thick subcategories form a filtration $C_{\leq v_1} \subsetneq \cdots \subsetneq \CC_{\leq v_k} = \CC$; see Examples~3.7, 3.8 and Figure 4 in \cite{HR25}.

An \defn{augmented stability condition} \cite[Def.~3.36]{HR25} is a triple $\HR{\CC_\bullet}{\ell_\bullet}{\Sigma}$ consisting of a multiscale decomposition and, for each leaf $v$, a function $\ell_v \colon \text{gr}_v(\CC_\bullet) \setminus 0 \to \Sigma_v \setminus \{p_v\}$ that induces a stability condition on $\text{gr}_v(\CC_\bullet) = \CC_{\leq v}/\CC_{<v}$ and where $p_v \in \Sigma_v$ is the intersection with the other irreducible component meeting $\Sigma_v$ (or $p_\infty$ if $\Sigma = \PP^1$). Let $\AStab{\CC}$ be the set of augmented stability conditions on $\CC$ up to real oriented isomorphism of the underlying multiscale lines. By construction, this is an extension $\PStab{\CC} \subset \AStab{\CC}$. As for its geometry, Halpern--Leistner and Robotis conjecture this space to be a manifold with corners; see \cite[Conj.~A]{HR25} and \cite[Thm.~6.7]{HR25} for a partial result.

Denote by $\AaStab{\CC}$ the subset where the multiscale line consists of either single $\PP^1$ or the otherwise smallest possible tree consisting of the root and two terminal leaves, $v$ and $w$, and the associated multiscale decomposition corresponds to a filtration $\CC_{\leq v} \subsetneq \CC_{\leq w} = \CC$. We conjecture that there is a continuous map
$\AaStab{\CC} \to \PStabQ{\CC}$ extending the identity on $\PStab{\CC}$ by $\HR{\CC_\bullet}{\ell_\bullet}{\Sigma} \mapsto \sigma$ with $\NN_\sigma = \CC_{\leq v}$ corresponding to $\ell_w \in \PStab{\CC/\NN_\sigma} = \PStab{\text{gr}_w(\CC_\bullet)} = \PStab{\CC/\CC_{\leq v}}$. This map forgets the information of $\ell_v \in \PStab{\CC_{v}} = \PStab{\CC_{\leq v}}$.

%

\bibliographystyle{arxivalpha}
\bibliography{compactification-bibliography}

\bigskip
{\small\texttt{
  \noindent        
  \begin{tabular}{lll}
    \textit{\textrm{Contact:}}
      & nathan.broomhead@plymouth.ac.uk, & d.pauksztello@lancaster.ac.uk, \\
      & david.ploog@uis.no,              & jonathan.woolf@liverpool.ac.uk
  \end{tabular}}}

\end{document}